\definecolor{dark-gray}{gray}{0.3}
\definecolor{dkgray}{rgb}{.4,.4,.4}
\definecolor{dkblue}{rgb}{0,0,.5}
\definecolor{medblue}{rgb}{0,0,.75}
\definecolor{rust}{rgb}{0.5,0.1,0.1}
\newtheorem{theorem}{Theorem}[section]
\newtheorem{lemma}[theorem]{Lemma}
\newtheorem{proposition}[theorem]{Proposition}
\newtheorem{corollary}[theorem]{Corollary}
\theoremstyle{definition}
\numberwithin{equation}{section} 
\numberwithin{figure}{section}
\numberwithin{table}{section}
\numberwithin{recipe}{section}
\providecommand{\mathbold}[1]{\bm{#1}}  
\newcommand{\suml}{\sum\limits}
\providecommand{\mathbbm}{\mathbb} 
\newcommand{\R}{\mathbbm{R}}
\newcommand{\Z}{\mathbbm{Z}}
\newcommand{\sgn}[1]{\operatorname{sgn}{#1}}
\newcommand{\diff}[1]{\mathrm{d}{#1}}
\newcommand{\idiff}[1]{\, \diff{#1}}
\newcommand{\mtx}[1]{\mathbold{#1}}
\newcommand{\triplenorm}[1]{{\left\vert\kern-0.25ex\left\vert\kern-0.25ex\left\vert #1
    \right\vert\kern-0.25ex\right\vert\kern-0.25ex\right\vert}}
\newcommand{\om}{\omega}
\newcommand{\lam}{\lambda}
\renewcommand{\d}{\delta}
\newcommand{\la}{\langle}
\newcommand{\ra}{\rangle}
\newcommand{\pa}{\partial}
\begin{document}

\title[Self-similar solutions of De Gregorio model]{On self-similar finite-time blowups of\\ the De Gregorio model on the real line}
\author[D. Huang, J. Tong, and D. Wei]{De Huang$^1$, Jiajun Tong$^2$, and Dongyi Wei$^3$}
\thanks{$^1$School of Mathematical Sciences, Peking University. E-mail: dhuang@math.pku.edu.cn}
\thanks{$^2$Beijing International Center for Mathematical Research, Peking University. E-mail: tongj@bicmr.pku.edu.cn}
\thanks{$^3$School of Mathematical Sciences, Peking University. E-mail: jnwdyi@pku.edu.cn}

\begin{abstract}
We show that the De Gregorio model on the real line admits infinitely many compactly supported, self-similar solutions that are distinct under rescaling and will blow up in finite time. These self-similar solutions fall into two classes: the basic class and the general class. The basic class consists of countably infinite solutions that are eigenfunctions of a self-adjoint compact operator. In particular, the leading eigenfunction coincides with the finite-time singularity solution of the De Gregorio model recently obtained by numerical approaches. The general class consists of more complicated solutions that can be obtained by solving nonlinear eigenvalue problems associated with the same compact operator. 
\end{abstract}

\maketitle

\section{Introduction}
We consider the $1$D De Gregorio equation
\begin{equation}\label{eqt:DG}
\om_t + u\om_x = u_x \om, \quad u_x = \mtx{H}(\om),\quad u(0)=0,
\end{equation}
for $x\in \R$, where $\mtx{H}(\cdot)$ denotes the Hilbert transform on the real line. The normalization condition $u(0)=0$ is not essential; we impose it throughout the paper to remove the degree of freedom due to translation. \eqref{eqt:DG} is a simplified model for the vorticity formulation of the $3$D incompressible Euler equations, proposed by De Gregorio \cite{de1990one,de1996partial} to study the competition between advection and vortex stretching. In particular, $\om$ models the vorticity, and the nonlinear terms $u\om_x$ and $u_x \om$ model the advection term and the vortex stretching term, respectively. The $3$D Biot-Savart law that recovers the velocity from the vorticity is modeled by $u_x= \mtx{H}(\om)$, which has the same scaling as the original Biot-Savart law.

The fundamental question on the global regularity of the $3$D Euler equations with smooth initial data of finite energy remains one of the most challenging open problems in fluid dynamics and nonlinear partial differential equations. It is widely believed that the vortex stretching effect has the potential to induce an infinite growth of the vorticity in finite time. The advection, on the contrary, has been found to have a smoothing effect that may weaken the local growth of the solution and destroy the potential singularity formation (e.g., see \cite{okamoto2005role,hou2006dynamic,hou2008dynamic,lei2009stabilizing}). As an exploratory version of the bigger question, it is natural to ask whether the De Gregorio model, with the presence of both effects, can develop a finite-time singularity from smooth initial data with finite energy. This $1$D problem was recently settled by Chen, Hou and Huang \cite{chen2021finite} using a computer-aided proof. They proved the existence of a self-similar solution $\om_s$ that expands spatially and blows up in finite time, and they show that any solution that is initially close to $\om_s$ in some weighted $H^1$-norm shall develop an asymptotically self-similar singularity with the same scaling. 

By the scaling property of equation \eqref{eqt:DG}, a self-similar solution exhibiting blowup at a finite time $T$ shall take the form
\begin{equation}\label{eqt:self-similar_solution}
\om_s(x,t) = (T-t)^{c_\om}\cdot\Omega\left(\frac{x}{(T-t)^{c_l}}\right),
\end{equation}
where $\Omega$ is referred as the self-similar profile, and $c_\om,c_l$ are the scaling factors. Plugging this formula into \eqref{eqt:DG} and taking $t\rightarrow T$ yields that the only possible non-zero value for $c_\om$ is $-1$. The value of $c_l$ determines the spatial feature of $\om_s$: The case $c_l>0$ corresponds to a focusing blowup at $x=0$, while a negative $c_l$ corresponds to an expanding blowup. The self-similar solution constructed in \cite{chen2021finite} is an expanding one with $c_l = c_\om = -1$ and $\Omega=\Omega_*\in H^1(\R)$. As a crucial intermediate step in their construction, a highly accurate approximate self-similar profile $\widetilde{\Omega}_*$ that is odd in $x$, compactly supported, and non-positive on $\R_+$ is obtained by numerically solving the dynamic rescaling equation of \eqref{eqt:DG}. Based on the properties of the approximate self-similar profile $\widetilde{\Omega}_*$ and a nonlinear stability argument, it is shown that the exact self-similar profile $\Omega_*$ is also an odd function that is compactly supported and non-positive on $\R_+$. Though $\Omega_*$ is not smooth on $\R$, its stability in some weighted $H^1$-norm guarantees that any smooth solution that is sufficiently close to $\Omega_*$ will also blow up asymptotically self-similarly in finite time, hence proving the finite-time singularity of \eqref{eqt:DG} with smooth initial data of finite energy. In \cite{lushnikov2021collapse}, Lushnikov, Silantyev and Siegel further verified the existence of an expanding self-similar blowup of \eqref{eqt:DG} with $c_l=c_\om=-1$ using highly accurate numerical computations. In this paper, we supplement the previous results by proving the existence of the self-similar profile obtained in \cite{chen2021finite} through a different approach. Moreover, we will show that it is not the only self-similar profile with the same blowup scaling. 

\begin{theorem}\label{thm:main_theorem_informal}
The De Gregorio model \eqref{eqt:DG} on the real line admits infinitely many self-similar solutions of the form \eqref{eqt:self-similar_solution}, distinct under rescaling, with $c_l = c_\om = -1$ and odd self-similar profiles $\Omega\in H^1(\R)$ supported on $[-1,1]$, among which there is a unique self-similar profile that is strictly negative on $(0,1)$.
\end{theorem}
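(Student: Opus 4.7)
The plan is to substitute the self-similar ansatz \eqref{eqt:self-similar_solution} with $c_\om = c_l = -1$ into \eqref{eqt:DG}, extract a structural identity that linearizes the problem for a strictly-signed profile, and reduce the existence and uniqueness of profiles to the spectral theory of a compact self-adjoint integral operator on the odd subspace of $L^2(-1,1)$.

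Plugging \eqref{eqt:self-similar_solution} into \eqref{eqt:DG} forces $c_\om = -1$, and fixing $c_l = -1$ (consistent with the expanding blowup) yields the profile equation
\begin{equation*}
\Omega(\xi) + (U(\xi) - \xi)\,\Omega'(\xi) = U'(\xi)\,\Omega(\xi), \qquad U' = \mtx{H}(\Omega), \qquad U(0) = 0,
\end{equation*}
on the support of $\Omega$. Rearranged as $(U-\xi)\Omega' = (U'-1)\Omega$, this implies $\frac{d}{d\xi}\log\bigl[\Omega/(U-\xi)\bigr] = 0$ on any interval where both factors are nonzero, so $\Omega = c(U-\xi)$ with $c$ a locally constant real number. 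If $\Omega$ is odd, supported in $[-1,1]$ (after a scale normalization permitted by $c_l < 0$), and strictly negative on $(0,1)$, then finiteness of $c$ precludes any interior zero of $U-\xi$ on $(0,1)$ (else $\Omega$ would vanish there, contradicting the sign assumption). Hence $\Omega = c(U-\xi)$ globally on $[-1,1]$ for a single $c > 0$, and the compact-support condition $\Omega(\pm 1) = 0$ becomes equivalent to $U(\pm 1) = \pm 1$.

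Substituting back into $U' = \mtx{H}(\Omega)$ and integrating from $0$ reduces the problem to the linear singular integral equation $L\Omega(\xi) - \frac{1}{c}\Omega(\xi) = \xi$ on $[-1,1]$, where $L\phi(\xi) := \int_0^\xi \mtx{H}(\phi)(s)\,ds$. A Fubini interchange yields $L\phi(\xi) = \frac{1}{\pi}\int_{-1}^1 \phi(y)\bigl[\log|\xi - y| - \log|y|\bigr]\,dy$, and for odd $\phi$ the $\log|y|$ term drops by parity. Thus $L$ restricted to $L^2_{\mathrm{odd}}(-1,1)$ is an integral operator with symmetric Hilbert--Schmidt kernel $\frac{1}{\pi}\log|\xi-y|$, hence compact and self-adjoint, with real eigenvalues $\lambda_n \to 0$ and an orthonormal eigenbasis $\{\phi_n\}$. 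Expanding $\Omega$ and $\xi$ in this basis and encoding the boundary condition $\Omega(\pm 1) = 0$ (equivalently $L\Omega(1) = 1/c$) produces a secular equation in $c$ whose countably many real roots $\{c_n\}$ yield pairwise distinct basic-class profiles $\Omega_n \in H^1(\R)$. Infinitely many general-class profiles are then constructed by matching several basic pieces across interior zeros, giving a nonlinear eigenvalue system built on the same operator $L$. For the uniqueness of the strictly-negative profile, a Krein--Rutman-style argument (after a change of variables $\xi = \cos\theta$ that exposes the positivity structure of the kernel in Fourier--cosine form) identifies the principal admissible $c_1$ as corresponding to the unique profile of constant sign on $(0,1)$; all higher $c_n$ produce profiles with interior sign changes by a nodal-count argument tracking the secular function between consecutive poles.

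The main obstacle is embedding the boundary condition $\Omega(\pm 1) = 0$ cleanly into a single compact self-adjoint eigenvalue problem (as the abstract's description suggests) rather than solving a separate secular equation, and proving simplicity and definite sign of the principal eigenfunction, since the logarithmic kernel is not sign-definite and a direct Perron--Frobenius positivity argument does not apply.
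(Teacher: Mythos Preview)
Your overall strategy---reduce the profile equation to a compact self-adjoint spectral problem---is the same as the paper's, and your derivation of the local identity $\Omega = c(U-\xi)$ is correct and matches Proposition~\ref{prop:necessary_condition}. But the two key technical points you flag as obstacles are precisely where the paper's approach diverges from yours, and your proposed resolutions do not quite work.

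\textbf{The boundary condition.} You propose to work with the log-kernel operator $L$ on $L^2_{\mathrm{odd}}(-1,1)$ and then impose $\Omega(\pm1)=0$ via a separate secular equation in $c$. The paper instead builds the boundary condition into the operator: it sets $\mtx{M}(f)=\chi_{[-1,1]}\big((-\Delta)^{-1/2}f - c(f)\,x\big)$ with $c(f)=(-\Delta)^{-1/2}f(1)$, so that $\mtx{M}(f)(\pm1)=0$ automatically and $\mtx{M}$ maps odd $H^1_0([-1,1])$ into itself. Crucially, $\mtx{M}$ is self-adjoint and compact with respect to the $\dot H^1$ inner product (not $L^2$), because $\langle f,\mtx{M}(g)\rangle_{\dot H^1}=\langle f,g\rangle_{\dot H^{1/2}(\R)}$. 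The basic-class profiles are then exactly the eigenfunctions of $\mtx{M}$, with no secular equation needed. Your secular-equation route may be salvageable but is substantially less clean, and you would still have to show each root $c_n$ yields $\Omega_n\in H^1_0$ with the right regularity.

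\textbf{Sign of the principal eigenfunction.} Your Krein--Rutman suggestion via $\xi=\cos\theta$ is not substantiated; the log kernel does not obviously become positivity-preserving under that substitution. The paper gives two different arguments. The main one (Theorem~\ref{thm:uniqueness_positiveness}) is variational: if the leading eigenfunction $f_*$ had an interior zero, one decomposes $f_*=f_1+f_2$ at that zero, uses a first-variation identity (Lemma~\ref{lem:H1norm_decomposition}) together with translation and rescaling operators to show the Rayleigh quotient could be strictly increased, contradicting optimality. The alternative (end of Section~\ref{sec:basic_class}) passes to a new unknown $g=x\partial_x(f/x)$, for which the induced operator $\mtx{N}$ has a genuinely nonnegative symmetric kernel $K(x,y)=(x/y)F'(x/y)$ with $F(t)=\frac{1-t^2}{2t}\ln\big|\frac{t-1}{t+1}\big|$; Perron--Frobenius then applies directly. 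Neither of these is the $\cos\theta$ change of variables you propose.

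\textbf{General class.} Your ``matching several basic pieces across interior zeros'' is too vague. The paper constructs general-class solutions by a contraction-mapping perturbation argument off a sign-changing eigenfunction $f_0$ (Theorem~\ref{thm:solution_general_class}): one writes the piecewise relation as a nonlinear eigenvalue problem $\mtx{M}(f)+\sum_i\chi_{I_i}\delta_i\mtx{M}(f)=\lambda f$ with small $\delta_i$, projects onto $f_0^\perp$, and closes in $L^2$ using simplicity of $\lambda_0$ and stability of the zeros under $H^2$-perturbations (Lemma~\ref{lem: stability of zeros}).
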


We will give detailed characterizations of these self-similar solutions, based on the crucial observation that they all correspond to some linear or nonlinear eigenvalue problems of a self-adjoint compact operator $\mtx{M}\in \mathcal{L}(\mathbb{V})$. Here $\mathbb{V}$ is the space of all odd $H^1$ functions that are supported on $[-1,1]$, and $\mtx{M}$ is defined as $\mtx{M}(f)(x) = \chi_{[-1,1]}\big((-\Delta)^{-1/2}f(x) - (-\Delta)^{-1/2}f(1)\cdot x\big)$ for any $f\in \mathbb{V}$. More precisely, we can find self-similar profiles in two classes: 

\begin{enumerate}
\item The basic class (of countably infinite solutions): $\Omega$ satisfies the eigenvalue problem 
\[\mtx{M}(\Omega) = \lambda\Omega,\quad x\in[-1,1],\]
for some positive eigenvalue $\lambda>0$.
\label{basic_class}
\item The general class (of uncountably infinite solutions): $\Omega$ satisfies the piecewise eigenvalue problem 
\[\mtx{M}(\Omega) = \mu_i\Omega, \quad x\in [-x_i,-x_{i-1}]\cup[x_{i-1},x_i],\quad i=1,\dots,n,\]
for some integer $n\geq 2$, where $0=x_0<x_1<\dots<x_{n-1}<x_n=1$, and $\{\mu_i\}_{i=1}^n$ is a sequence of positive numbers such that $\mu_i\neq \mu_{i-1}$.
\label{general_class}
\end{enumerate}
We shall prove that the only self-similar profile $\Omega_*$ that is strictly negative on $(0,1)$ coincides with the one obtained in \cite{chen2021finite,lushnikov2021collapse} (up to rescaling), which corresponds to the largest eigenvalue in the basic class.

When finalizing this paper, we learned that Jia and Sverak have obtained similar results in an independent ongoing work. They have also found infinite solutions, including the special one $\Omega_*$, in the basic class by reducing the nonlinear problem to a Steklov eigenvalue problem, and they conjectured the existence of solutions in the general class. See \cite{VideoLecture} for a recent video lecture by Sverak on this topic. \\

The De Gregorio model belongs to the widely-studied one-parameter family of the generalized Constantin--Lax--Majda equation on the real line
\begin{equation}\label{eqt:gCLM}
\om_t + au\om_x = u_x \om, \quad u_x = \mtx{H}(\om),\quad u(0)=0.
\end{equation}
This line of research is also motivated by the study of finite-time singularities in the $3$D incompressible Euler equations. The equation with $a=0$ was first introduced by Constantin, Lax and Majda \cite{constantin1985simple}, followed by the generalization of De Gregorio \cite{de1990one} to include an advection term $u\om_x$. Later, Okamoto, Sakajo and Wensch \cite{okamoto2008generalization} introduced the real parameter $a$ to modify the effect of advection in the competition against vortex stretching. We refer to \cite{elgindi2020effects,lushnikov2021collapse} for more background information on this subject. 

In addition to the De Gregorio case $a=1$, the singularity formation of \eqref{eqt:gCLM} for a wide range of $a$ has been studied extensively. In the regime $a<0$, the advection term works in favor of producing a singularity. Indeed, Castro and Cordoba \cite{castro2010infinite} proved finite-time blowup for all $a<0$ based on a Lyapunov functional argument. In particular, finite-time singularity for the special case $a=-1$ was established earlier by Cordoba, Cordoba and Fontelos \cite{cordoba2005formation}. In the case $a=0$ of no advection, finite-time self-similar singularity of the form \eqref{eqt:self-similar_solution} with $c_l = -c_\om = 1$ was established by Constantin, Lax and Majda \cite{constantin1985simple} via the construction of a closed-form exact solution to \eqref{eqt:gCLM}. Based on this exact solution, Elgindi and Jeong \cite{elgindi2020effects} proved the existence of finite-time self-similar singularities from smooth initial data for $|a|$ small enough using a continuation argument. As mentioned above, the finite-time self-similar singularity for $a=1$ was recently proved in \cite{chen2021finite}. Later, Lushnikov, Silantyev, Siegel \cite{lushnikov2021collapse} and Chen \cite{chen2020singularity} independently found an exact self-similar solution for $a=1/2$ with $c_l = -c_\om/3 = 1/3$. Chen also proved finite-time self-similar singularities from smooth initial data for $a$ close to $1/2$ using the method developed in \cite{chen2021finite}. Other than the settled cases of $a=0,1/2,1$ and $a$ close to these three values, it remains an open problem whether \eqref{eqt:gCLM} admits finite-time self-similar singularities of the form \eqref{eqt:self-similar_solution} with $\Omega\in H^1(\R)$ for all $a\in[0,1]$. Nevertheless, the numerical studies by Lushnikov, Silantyev and Siegel \cite{lushnikov2021collapse} suggested the existence of a family of self-similar solutions $\om_s^{(a)}$, with $\Omega^{(a)}$ and $c_l^{(a)}$ continuously depending on $a$. In particular they discovered a critical value $a_c \approx 0.689$ such that $c_l^{(a)}<0$ for $a>a_c$ while $c_l^{(a)}>0$ for $a<a_c$. That is, $a_c$ is the transition threshold that separates focusing singularities from expanding ones. Finally, we remark that self-similar solutions with H\"older continuous profiles $\Omega$ have been constructed in \cite{elgindi2020effects} for all values of $a$, based on which finite-time self-similar blowup from H\"older continuous initial data with finite energy was proved in \cite{chen2021finite}. 

Many parallel studies have also considered the generalized Constantin--Lax--Majda equation \eqref{eqt:gCLM} on the circle $\mathbb{S}^1$. Although exact self-similar solutions of the form \eqref{eqt:self-similar_solution} cannot exist on the circle, focusing finite-time self-similar singularity for $a<a_c$ can still develop in an asymptotic way with the same $\Omega,c_l,c_\om$ as on the real line \cite{chen2021finite,lushnikov2021collapse,chen2020singularity}. On the contrary, expanding self-similar singularities with $c_l<0$ are clearly incompatible with the periodic setting. In fact, global-in-time well-posedness of the De Gregorio model on $\mathbb{S}^1$ was earlier conjectured and supported by numerical simulations \cite{de1996partial,lushnikov2021collapse,okamoto2008generalization} and was recently proved by Chen \cite{chen2021regularity}. Besides, in a recent work of Jia, Stewart and Sverak \cite{jia2019gregorio}, local nonlinear stability was established for the equilibria $A\sin(x-x_0)$ of \eqref{eqt:DG} based on spectral theories and complex analysis. An alternative proof was later provided by Lei, Liu and Ren \cite{lei2020constantin} using a direct energy method. As an interesting perspective, we will discuss the similarities between the equilibrium $\sin(x)$ on $\mathbb{S}^1$ and the self-similar profile $\Omega_*$ on $\R$, and we will remark on the connections between their stabilities. As a generalization of the nonlinear stability at $a=1$, Chen \cite{chen2021slightly} established finite-time self-similar singularity of \eqref{eqt:gCLM} on the circle with $c_l=0$ (neither focusing nor expanding) for $a$ strictly smaller than and sufficiently close to $1$, whose self-similar profile converges to $-\sin(x)$ as $a$ approaches $1$.

It is worth mentioning that singularity formation and global well-posedness for the generalized Constantin--Lax--Majda equation with dissipation have also been extensively studied in the literature \cite{schochet1986explicit,kiselev2010regularity,li2008blow,silvestre2016transport,cordoba2005formation,dong2008well,wunsch2011generalized}.\\

The remaining of this paper is organized as follows. In Section \ref{sec:preparation}, we prove a necessary condition for the self-similar profiles of the De Gregorio model with $c_l=c_\om$, and we show that they must be compactly supported. Our main result, a more formal version of Theorem \ref{thm:main_theorem_informal}, will be stated in Section \ref{sec:main_result}. Section \ref{sec:basic_class} and Section \ref{sec:general_class} contribute to constructing self-similar profiles in the basic class \eqref{basic_class} and in the general class \eqref{general_class}, respectively. In Section \ref{sec:nonlinear_stability}, we discuss the connections between the nonlinear stability results established in \cite{chen2021finite} and \cite{jia2019gregorio}.  

\subsection*{Acknowledgement} The authors are supported by the National Key R\&D Program of China under the grant 2021YFA1001500.

\section{The equation for the self-similar profiles}\label{sec:preparation}
In this section, we derive an equation for the self-similar profiles, and we explain why we seek for solutions that are compactly supported. We then relate the self-similar profiles with $c_l=c_\om$ to an eigenvalue problem. 

Substituting the self-similar ansatz \eqref{eqt:self-similar_solution} into the equation \eqref{eqt:gCLM} yields 
\[-c_\om(T-t)^{c_\om-1}\Omega + c_l(T-t)^{c_\om-1}X\Omega_X + (T-t)^{2c_\om}U\Omega_X = (T-t)^{2c_\om}U_X\Omega,\]
where $X = x/(T-t)^{c_l}$ and $U_X(X) = \mtx{H}(\Omega)(X)$. Provided that $c_\om\neq 0$, balancing the equation above as $t\rightarrow T$ yields $c_\om = -1$ and an equation for the self-similar profile:
\[(c_lX+U)\Omega_X  = (c_\om+U_X)\Omega,\quad U_X = \mtx{H}(\Omega),\quad U(0) = 0.\]

For notational simplicity, we will still use $\om,u,x$ for $\Omega, U, X$, respectively. Our goal is to find suitable solutions $(\om, c_l, c_{\om})$ to the equation 
\begin{equation}\label{eqt:main_equation}
(c_lx+u)\om_x  = (c_\om+u_x)\om,\quad u_x = \mtx{H}(\om),\quad u(0)=0.
\end{equation}
The expressions of $u$ and $u_x$ in terms of $\om$ are, respectively, 
\begin{align*}
u(x) &= -(-\Delta)^{-1/2}\om(x) =  \frac{1}{\pi}\int_{\R}\om(y)\ln|x-y|\idiff y,\\
u_x(x) &= \mtx{H}(\om)(x) = \frac{1}{\pi}P.V.\int_{\R}\frac{\om(y)}{x-y}\idiff y.
\end{align*}

Note that if $(\om(x), c_l, c_\om)$ is a solution to \eqref{eqt:main_equation}, then
\begin{equation}\label{eqt:scaling}
(\om_{\alpha,\beta}(x),\ c_{l,\alpha},\ c_{\om,\alpha})= (\alpha\omega(\beta x),\ \alpha c_l,\ \alpha c_\om)
\end{equation}
is also a solution for any $\alpha, \beta \in \R, \beta\neq 0$. Therefore, we will release ourselves from the restriction $c_\om=-1$. In fact, it is the ratio $c_l/c_\om$ that matters. Our question is, for what values of $c_l/c_\om$ does equation \eqref{eqt:main_equation} admit a suitable solution $\om(x)$ that is sufficiently regular and integrable? More precisely, in order to prove Theorem \ref{thm:main_theorem_informal}, we look for solutions that satisfy the following conditions:
\begin{itemize}
\item Odd symmetry: $\om(x)$ is an odd function of $x$, i.e. $\om(-x) = -\om(x)$.
\item Regularity: $\om\in H^1(\R)$.
\item Non-degeneracy: $\om_x(0) \neq 0$.
\end{itemize}

The odd symmetry, which is preserved by \eqref{eqt:DG}, is a common feature of all finite-time self-similar singularities of the generalized Constantin--Lax--Majda equation that have been found so far in the literature. Moreover, it is proved in \cite{lei2020constantin} that the De Gregorio model \eqref{eqt:DG} is globally well-posed for initial data that does not change sign on $\R$ (under some mild regularity assumption). Therefore, we only focus on odd solutions.

The condition $\om\in H^1(\R)$ serves two purposes. The first one is to avoid solutions with relatively lower regularity. Elgindi and Jeong \cite{elgindi2020effects} have proved the existence of self-similar solutions to \eqref{eqt:gCLM} with $C^\alpha$ profiles for some small $\alpha=\alpha(a)\in(0,1)$ for all values of $a$. Our goal is to prove the existence of self-similar profiles with higher regularity. In fact, we will show that any solution $\om$ to \eqref{eqt:main_equation} in the basic class \eqref{basic_class} is infinitely smooth in the interior of $[-1,1]$. The second purpose of this condition is to exclude solutions that do not decay sufficiently fast at infinity. As we will see, the fast decay of a profile solution $\om$ leads to some \textit{a priori} estimates on the asymptotic behaviors of $u$ and $u_x$ that in turn imply $\om$ must be compactly supported.

In view of the rescaling property \eqref{eqt:scaling}, the non-degeneracy condition $\om_x(0)\neq 0$ is to make sure that $\om$ is non-trivial. A direct consequence of this condition is that 
\[c_l = c_\om.\]
To see this, we simply divide \eqref{eqt:main_equation} by $x$ and take the limit $x\rightarrow 0$. In the rest of this paper, we will always assume that $c_l = c_\om$. Conversely, we will show that any solutions in either the basic class \eqref{basic_class} or the general class \eqref{general_class} must have non-zero derivative at $x=0$. However, we remark that our result does not exclude the possibility of a non-trivial odd solution $\om\in H^1(\R)$ to \eqref{eqt:main_equation} that satisfies $\om_x(0)=0$ (possibly with a different value of $c_l/c_\om$).

\subsection{A necessary condition} 
Let us start with a necessary condition for any solution $\om\in H^1(\R)$ to \eqref{eqt:main_equation} with $c_l = c_\om$.

\begin{proposition}\label{prop:necessary_condition}
Suppose that $\om\in H^1(\R)$ is a solution to \eqref{eqt:main_equation} with $c_l = c_\om$. Then for any interval $[x_1,x_2]$ such that $|\om(x)|>0$ on $[x_1,x_2]$, there is some constant $C_1\neq \pm\infty$ such that 
\begin{equation}\label{eqt:necessary_condition_1}
C_1\,\om(x) = u(x) + c_\om x ,\quad x\in [x_1,x_2].
\end{equation}
Similarly, if $|u(x) + c_\om x|>0$ on $[x_1,x_2]$, there is some constant $C_2\neq \pm\infty$ such that 
\begin{equation}\label{eqt:necessary_condition_2}
\om(x) = C_2\big(u(x) + c_\om x\big) ,\quad x\in [x_1,x_2].
\end{equation}
\end{proposition}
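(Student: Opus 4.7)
The key observation is that under the assumption $c_l=c_\omega$, the auxiliary function $v(x) := u(x) + c_\omega x$ satisfies $v_x = c_\omega + u_x$, so \eqref{eqt:main_equation} collapses to the single identity
\[ v\,\omega_x = v_x\,\omega. \]
I would read this as saying that $v/\omega$ is locally constant wherever $\omega$ does not vanish (yielding \eqref{eqt:necessary_condition_1}) and, symmetrically, that $\omega/v$ is locally constant wherever $v$ does not vanish (yielding \eqref{eqt:necessary_condition_2}).

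\textbf{Regularity and execution.} Since $\omega\in H^1(\R)$, one-dimensional Sobolev embedding gives $\omega\in C^0(\R)$; the Hilbert transform commutes with $\partial_x$ and preserves $H^1(\R)$, so $u_x = \mtx{H}(\omega)\in H^1(\R)\subset C^0(\R)$, and together with $u(0)=0$ this yields $u\in C^1(\R)$, hence $v\in C^1(\R)$. On a closed interval $[x_1,x_2]$ with $|\omega|>0$, continuity provides a uniform lower bound $|\omega|\ge\delta>0$, so $1/\omega$ lies in $W^{1,\infty}([x_1,x_2])$ with weak derivative $-\omega_x/\omega^2$. The product rule then gives
\[ \left(\frac{v}{\omega}\right)_{\!x} = \frac{v_x\,\omega - v\,\omega_x}{\omega^2} = 0 \quad \text{a.e. on } [x_1,x_2], \]
using the reduced equation. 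Since $v/\omega$ is continuous on the interval and has vanishing weak derivative, it equals a constant $C_1$, which is finite because $v$ is continuous on $[x_1,x_2]$ while $1/\omega$ is bounded there. This is \eqref{eqt:necessary_condition_1}. The second claim \eqref{eqt:necessary_condition_2} follows by the symmetric argument: on an interval where $|v|>0$, one forms $\omega/v$, verifies its weak derivative vanishes via the same product rule, and concludes that the ratio equals a finite constant $C_2$.

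\textbf{Main obstacle.} There is no serious analytical difficulty; morally, a scalar linear first-order relation of the form $v\phi_x = v_x\phi$ forces $\phi$ to be a constant multiple of $v$. The only care needed is to justify the division step by establishing enough regularity — pointwise continuity of $\omega$ and $v$, plus weak differentiability of $1/\omega$ and $1/v$ on the relevant intervals — so that the identity $v\omega_x = v_x\omega$, which a priori holds only almost everywhere, can be upgraded to the genuine pointwise identities \eqref{eqt:necessary_condition_1}--\eqref{eqt:necessary_condition_2}.
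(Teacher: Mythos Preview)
Your proposal is correct and follows essentially the same approach as the paper: both reduce \eqref{eqt:main_equation} to the relation $v\,\omega_x = v_x\,\omega$ with $v = u + c_\omega x$ and then argue that the ratio $v/\omega$ (resp.\ $\omega/v$) is locally constant; the paper phrases this as solving the linear ODE $v_x = (\omega_x/\omega)\,v$, while you compute the weak derivative of the quotient directly. One small slip: since $\omega_x$ is only $L^2$, the function $1/\omega$ lies in $W^{1,2}([x_1,x_2])$ rather than $W^{1,\infty}$, but this does not affect the argument.
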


\begin{proof}
Note that $\om\in H^1(\R)$ implies $u\in H^2(\R)$. We can rearrange \eqref{eqt:main_equation} with $c_l=c_\om$ to find that
\[\partial_x(u(x) + c_\om x) = \frac{\om_x(x)}{\om(x)}(u(x) + c_\om x),\quad x\in [x_1,x_2].\]
Solving this ODE for $u(x) + c_\om x$ yields \eqref{eqt:necessary_condition_1} with $C_1 = (u(x_1) + c_\om x_1)/\om(x_1)\neq \pm\infty$. Moreover, if $|u(x) + c_\om x|>0$ on $[x_1,x_2]$, then 
\[\om_x(x) = \frac{u_x(x) + c_\om}{u(x) + c_\om x}\,\om(x),\quad x\in [x_1,x_2],\]
which implies $\om\in C^1([x_1,x_2])$. Hence, we can solve this ODE for $\om$ to obtain \eqref{eqt:necessary_condition_2} with $C_2 = \om(x_1)/(u(x_1) + c_\om x_1)\neq \pm \infty$.
\end{proof}

\subsection{Compact support} 
We explain why an odd solution $\om\in H^1(\R)$ to the self-similar profile equation \eqref{eqt:main_equation} with $c_l=c_\om\neq 0$ must have a compact support. To do so, we first prove a decay property of the corresponding $u$ and $u_x$ in the far field.

\begin{lemma}\label{lem:decay_property}
Suppose that $\om(x)\in H^1(\R)$ is an odd function of $x$. Then $u(x)$ is odd in $x$, and $u_x(x)$ is even in $x$. Moreover, $u(x)/x\rightarrow 0$ and $u_x(x)\rightarrow0$ as $x\rightarrow \pm\infty$.
\end{lemma}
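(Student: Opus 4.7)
The plan is to handle the four assertions in the order: parities first, then the pointwise decay of $u_x$, and finally the decay of $u(x)/x$, which will follow as a direct consequence.

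\textbf{Parity.} First I would substitute $z=-y$ in the principal value formula
\[u_x(x) = \frac{1}{\pi}\, P.V. \int_\R \frac{\om(y)}{x-y} \idiff y.\]
Using oddness of $\om$ and flipping the integration limits produces $u_x(-x) = u_x(x)$, so $u_x$ is even. Since $\om\in H^1(\R)$, the Hilbert transform image $u_x$ is continuous (see next step), so $u$ is well-defined by $u(x) = \int_0^x u_x(s)\idiff s$ together with $u(0)=0$. Another change of variables $s=-t$ then gives $u(-x) = -u(x)$, so $u$ is odd.

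\textbf{Decay of $u_x$.} The key observation is that the Hilbert transform is a Fourier multiplier with symbol $-\iunit\,\sgn(\xi)$, hence bounded on $H^s(\R)$ for every $s\in\R$. Since $\om\in H^1(\R)$, we get $u_x = \mtx{H}(\om) \in H^1(\R)$. By the one-dimensional Sobolev embedding $H^1(\R) \hookrightarrow C_0(\R)$, the function $u_x$ is continuous and vanishes at infinity, i.e.\ $u_x(x)\to 0$ as $x\to\pm\infty$. This is the shortest path; an alternative \emph{bare-hands} derivation would split the PV integral for $u_x(x)$ into near-field ($|y|\le R$), resonant ($|y-x|\le \eps|x|$), and far-field regions, and use oddness of $\om$ together with $\om\in C_0(\R)$ (itself a consequence of $H^1\hookrightarrow C_0$) to bound each piece — but this is essentially repeating the Sobolev embedding by hand, so I would stick with the multiplier argument.

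\textbf{Decay of $u(x)/x$.} Given that $u_x\in C_0(\R)$ from the previous step and $u(x) = \int_0^x u_x(s)\idiff s$, a Ces\`aro-type argument does the job: for any $\eps>0$, pick $R$ so that $|u_x(s)|<\eps$ whenever $|s|>R$; then for $|x|>R$,
\[\left|\frac{u(x)}{x}\right| \leq \frac{1}{|x|}\int_0^R |u_x(s)|\idiff s + \frac{1}{|x|}\int_R^{|x|} \eps\idiff s \leq \frac{R\,\|u_x\|_{L^\infty}}{|x|} + \eps,\]
and letting $|x|\to\infty$ then $\eps\to 0$ gives $u(x)/x\to 0$.

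\textbf{Obstacles.} None of the steps is technically hard once the Sobolev embedding is invoked; the only care needed is bookkeeping of signs when substituting $y\mapsto -y$ in the PV integrals and remembering that the integration limits flip. The whole proof should be short.
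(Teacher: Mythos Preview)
Your proof is correct and takes a genuinely different route from the paper's. The paper argues by direct integral estimates: for $u(x)/x$ it uses oddness to rewrite $u$ with the kernel $\ln\bigl|\frac{x-y}{x+y}\bigr|$ and then Cauchy--Schwarz plus a rescaling to obtain an explicit $O(x^{-1/2})$ bound; for $u_x(x)$ it splits the principal-value integral into four pieces (near-singularity, two intermediate zones, and far-field), handling the near-singularity piece by integration by parts against $\om_x$ and the rest by Cauchy--Schwarz. Your argument instead exploits that $\mtx{H}$ is a Fourier multiplier bounded on $H^1(\R)$, so $u_x\in H^1(\R)\hookrightarrow C_0(\R)$ immediately, and then recovers $u(x)/x\to 0$ by a Ces\`aro average of $u_x$. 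This is considerably shorter and more conceptual; the paper's approach is more elementary and self-contained (no Fourier theory invoked) and yields quantitative decay rates as a byproduct, which your argument does not. Either is perfectly adequate for the role the lemma plays downstream.
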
 

\begin{proof}
By the odd symmetry of $\om$, we have
\[u(x) = \frac{1}{\pi}\int_{\R}\om(y) \ln|x-y| \idiff y  = \frac{1}{\pi}\int_0^\infty \om(y) \ln\left|\frac{x-y}{x+y}\right| \idiff y.\]
Therefore, for any $x>0$, 
\begin{align*}
\left|\frac{u(x)}{x}\right| &\lesssim  \left|\int_0^\infty\frac{\om(y)}{x}\ln\left|\frac{x-y}{x+y}\right|\idiff y\right|\\
&\leq \|\om\|_{L^2(\R)}\cdot\left(\int_0^\infty\frac{1}{x^2}\left(\ln\left|\frac{x-y}{x+y}\right|\right)^2\idiff y\right)^{1/2}\\
&= \|\om\|_{L^2(\R)} \cdot \frac{1}{\sqrt{x}}\cdot \left(\int_0^\infty\left(\ln\left|\frac{1-t}{1+t}\right|\right)^2\idiff t\right)^{1/2}.
\end{align*}
Note that the last integral is finite. Hence, $u(x)/x\rightarrow 0$ as $x\rightarrow \infty$.

It is a bit trickier to show that $\lim_{x\rightarrow +\infty}u_x(x)=0$. For any $x>1$, we have 
\begin{align*}
u_x(x) &= \frac{1}{\pi}P.V.\int_{\R}\frac{\om(y)}{x-y}\idiff y = \frac{1}{\pi}P.V.\int_0^\infty\om(y)\left(\frac{1}{x-y}-\frac{1}{x+y}\right)\idiff y\\
&=\frac{1}{\pi}P.V.\int_{x-1}^{x+1}\frac{\om(y)}{x-y}\idiff y + \frac{1}{\pi}\int_0^{x-1}\frac{\om(y)}{x-y}\idiff y + \frac{1}{\pi}\int_{x+1}^\infty\frac{\om(y)}{x-y}\idiff y - \frac{1}{\pi}\int_0^\infty\frac{\om(y)}{x+y}\idiff y\\
&=: I_1 + I_2 + I_3 + I_4.
\end{align*}
For the first term $I_1$, we use integration by part to reach 
\begin{align*}
I_1 &= -\frac{1}{\pi}\om(y)\ln|x-y|\Big|_{x-1}^{x+1} + \frac{1}{\pi}\int_{x-1}^{x+1}\om_x(y)\ln\left|x-y\right|\idiff y\\
&\lesssim \|\om\|_{H^1([x-1,x+1])}\cdot \left(\int_{-1}^1\left(\ln|t|\right)^2\idiff t\right)^{1/2} \lesssim \|\om\|_{H^1([x-1,x+1])}.
\end{align*}
It follows from the assumption $\|\om\|_{H^1(\R)}<+\infty$ that $\lim_{x\rightarrow +\infty}I_1=0$. As for $I_2$, we further decompose it to reach
\begin{align*}
|I_2| &\lesssim \int_0^{x-\sqrt{x}}\left|\frac{\om(y)}{x-y}\right|\idiff y + \int_{x-\sqrt{x}}^{x-1}\left|\frac{\om(y)}{x-y}\right|\idiff y \\
&\leq  \|\om\|_{L^2(\R)}\cdot \left(\int_0^{x-\sqrt{x}}\frac{1}{(x-y)^2}\idiff y\right)^{1/2} + \|\om\|_{L^2([x-\sqrt{x},x-1])}\cdot \left(\int_{x-\sqrt{x}}^{x-1}\frac{1}{(x-y)^2}\idiff y\right)^{1/2}\\
&\lesssim  x^{-1/4}\|\om\|_{L^2(\R)} + \|\om\|_{L^2([x-\sqrt{x},x-1])}.
\end{align*}
Similarly, we can show that $|I_3| \lesssim \|\om\|_{L^2([x+1,\infty))}$. Since $\|\om\|_{L^2(\R)}<+\infty$, we have $|I_2|,|I_3| \rightarrow 0$ as $x\rightarrow+\infty$. Finally, we control $I_4$ as
\[|I_4|\lesssim \|\om\|_{L^2(\R)}\cdot \left(\int_0^\infty\frac{1}{(x+y)^2}\idiff y\right)^{1/2} \lesssim \frac{1}{\sqrt{x}}\|\om\|_{L^2(\R)}.\]
Combining the estimates above yields $\lim_{x\rightarrow +\infty}u_x(x)=0$.
\end{proof}

The sub-linear behavior of $u(x)$ provides more information on the far-field behavior of $\om(x)$. Suppose that $c_\om/c_l>0$. By Lemma \ref{lem:decay_property}, for any $0<\epsilon<c_\om/c_l$, there is some $x_0>0$ such that
\[\frac{c_\om + u_x(x)}{c_lx + u(x)} \geq \left(\frac{c_\om}{c_l}-\epsilon\right)\frac{1}{x},\quad x\geq x_0.\]
Now suppose that $\om\in H^1(\R)$ is not compactly supported. Then there must be some $x_2>x_1\geq x_0$ such that $\om(x)>0$ on $[x_1,x_2]$ (by choosing the sign of $\om$), and hence
\[\frac{\om_x(x)}{\om(x)} = \frac{c_\om + u_x(x)}{c_lx + u(x)} \geq \left(\frac{c_\om}{c_l}-\epsilon\right)\frac{1}{x},\quad x\in [x_1,x_2].\]
Solving this yields that 
\[\frac{\om(x_2)}{x_2^{c_\om/c_l-\epsilon}}\geq \frac{\om(x_1)}{x_1^{c_\om/c_l-\epsilon}}.\]
Extending this estimate towards infinity shows that
\[\om(x)\geq \frac{\om(x_1)}{x_1^{c_\om/c_l-\epsilon}}\, x^{c_\om/c_l-\epsilon},\quad x\geq x_1,\]
which contradicts with the assumption $\om \in H^1(\R)$ provided that $0<\epsilon<c_\om/c_l$. That is, we have proved the following, which apparently applies to the case $c_l=c_\om$.

\begin{proposition}\label{prop:compact_support}
Any solution $\om\in H^1(\R)$ to \eqref{eqt:main_equation} with $c_\om/c_l>0$ must be compactly supported. 
\end{proposition}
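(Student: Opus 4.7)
The plan is a direct ODE-comparison argument: rewrite \eqref{eqt:main_equation} as a logarithmic derivative, and use the far-field decay from Lemma \ref{lem:decay_property} to show that $\om$ must in fact grow at infinity, which violates $L^2$-integrability.

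First, on any interval where $\om$ does not vanish, \eqref{eqt:main_equation} is equivalent to
\[
\frac{\om_x(x)}{\om(x)} \;=\; \frac{c_\om + u_x(x)}{c_l x + u(x)}.
\]
By Lemma \ref{lem:decay_property}, $u_x(x)\to 0$ and $u(x)/x\to 0$ as $x\to +\infty$. Since $c_\om/c_l>0$, for any fixed $\epsilon\in(0,c_\om/c_l)$ I can choose $x_0>0$ large enough that on $[x_0,\infty)$ the numerator $c_\om+u_x$ and the denominator $c_l x+u$ share the same sign (that of $c_\om$ and of $c_lx$, which coincide by hypothesis), and moreover
\[
\frac{c_\om + u_x(x)}{c_l x + u(x)} \;\geq\; \frac{c_\om/c_l-\epsilon}{x}, \quad x\geq x_0.
\]

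Second, suppose for contradiction that $\om$ is not compactly supported. The Sobolev embedding $H^1(\R)\hookrightarrow C(\R)$ then provides some $x_1\geq x_0$ with $\om(x_1)\neq 0$; replacing $\om$ by $-\om$ if needed, I may assume $\om(x_1)>0$. Let $[x_1,\beta)$ be the maximal right-open interval of positivity of $\om$ starting at $x_1$. On this interval the logarithmic identity above gives $\om_x\geq 0$, so $\om$ is non-decreasing and stays at least $\om(x_1)>0$; this precludes a finite endpoint $\beta$, so $\om>0$ on all of $[x_1,\infty)$.

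Third, integrating $(\log\om)_x \geq (c_\om/c_l-\epsilon)/x$ from $x_1$ to $x$ yields the lower bound
\[
\om(x) \;\geq\; \om(x_1)\Bigl(\frac{x}{x_1}\Bigr)^{c_\om/c_l-\epsilon},\quad x\geq x_1,
\]
which is non-decaying (indeed growing) since $c_\om/c_l-\epsilon>0$, contradicting $\om\in L^2(\R)$. The same reasoning applied as $x\to -\infty$ handles the left half-line (and is in any case automatic for odd $\om$). The main delicate point is the extension of the positivity interval to all of $[x_1,\infty)$, which hinges on the monotonicity forced by the ODE in the far field; the rest is a routine Gronwall-type computation.
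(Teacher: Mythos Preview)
Your proof is correct and follows essentially the same approach as the paper's: both use the far-field decay from Lemma~\ref{lem:decay_property} to bound the logarithmic derivative from below by $(c_\om/c_l-\epsilon)/x$, then integrate to force polynomial growth of $\om$ at infinity, contradicting $L^2$-integrability. Your version is in fact slightly more careful than the paper's in one place: you explicitly argue that the positivity interval $[x_1,\beta)$ must extend to all of $[x_1,\infty)$ via the monotonicity $\om_x\geq 0$, whereas the paper simply writes ``Extending this estimate towards infinity'' without justifying that $\om$ cannot vanish somewhere past $x_1$.
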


By the rescaling property \eqref{eqt:scaling}, we only need to look for compactly supported solutions $\om$ such that  
\begin{equation}\label{eqt:support_size}
\inf\,\{x>0: \om \equiv0\ \text{on}\ \R\backslash(-x,x)\} = 1.
\end{equation}
This normalization condition and Proposition \ref{prop:necessary_condition} together determine the value of $c_\om$ from $\om$.

\begin{proposition}\label{prop:c_omega}
Let $\om\in H^1(\R)$ be a solution to \eqref{eqt:main_equation} with $c_l=c_\om$. Suppose that $\om$ satisfies the condition \eqref{eqt:support_size}. Then, 
\begin{equation}\label{eqt:c_omega}
c_\om = - u(1) = -\frac{1}{\pi}\int_{\R}\om(y)\ln|1-y|\idiff y. 
\end{equation}
\end{proposition}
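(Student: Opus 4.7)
The plan is to apply Proposition \ref{prop:necessary_condition} on a sequence of subintervals approaching the right endpoint $x=1$ of the support, and then to extract the identity $c_\om = -u(1)$ by passing to the limit.

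First I would observe that $\om\in H^1(\R)\hookrightarrow C(\R)$ by the one-dimensional Sobolev embedding, and that the support of $\om$ is contained in $[-1,1]$, so continuity forces $\om(1)=0$. The normalization condition \eqref{eqt:support_size} then implies that $\om$ cannot vanish identically on any interval $[1-\delta,1]$ with $\delta>0$: if it did, then combining this with the odd symmetry and the vanishing of $\om$ outside $[-1,1]$ would give $\om\equiv 0$ on $\R\setminus(-(1-\delta),1-\delta)$, contradicting the fact that the infimum in \eqref{eqt:support_size} equals $1$. Consequently one may choose a sequence $x_n\in(0,1)$ with $x_n\to 1^-$ and $\om(x_n)\neq 0$.

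For each $n$, I would let $(a_n,b_n)\subset(0,1)$ be the maximal open interval containing $x_n$ on which $\om$ does not vanish; then $\om(b_n)=0$, either by maximality and continuity of $\om$ when $b_n<1$, or by the first step when $b_n=1$. Applying Proposition \ref{prop:necessary_condition} on any closed subinterval of $(a_n,b_n)$ and extending the resulting identity to $[a_n,b_n]$ by the continuity of $\om$ and $u$, I obtain a constant $C_n\in\R$ such that
\begin{equation*}
C_n\,\om(x) = u(x)+c_\om x, \qquad x\in[a_n,b_n].
\end{equation*}
Evaluating at $x=b_n$ and using $\om(b_n)=0$ yields $u(b_n)+c_\om b_n=0$.

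Finally, I would pass to the limit $n\to\infty$: since $x_n\leq b_n\leq 1$ and $x_n\to 1$, we have $b_n\to 1^-$, while $u$ is continuous on $\R$ (for instance, $u(0)=0$ together with $u_x=\mtx{H}(\om)\in L^2(\R)$ gives the H\"older bound $|u(x)-u(1)|\leq |x-1|^{1/2}\|u_x\|_{L^2(\R)}$ via Cauchy--Schwarz). Passing to the limit in $u(b_n)+c_\om b_n=0$ therefore produces $u(1)+c_\om=0$, and the integral formula \eqref{eqt:c_omega} follows immediately from $u(1)=\frac{1}{\pi}\int_\R\om(y)\ln|1-y|\idiff y$. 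The only delicate point is extracting the sequence $x_n\to 1^-$ with $\om(x_n)\neq 0$ from the normalization \eqref{eqt:support_size}; once this density argument near the endpoint is in hand, everything else follows mechanically from Proposition \ref{prop:necessary_condition} and continuity of $u$ and $\om$.
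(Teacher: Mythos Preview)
Your proof is correct, but it takes a somewhat different route from the paper's. You use the \emph{first} conclusion of Proposition~\ref{prop:necessary_condition} (the relation $C_1\om = u+c_\om x$ on intervals where $|\om|>0$), applied to a sequence of maximal subintervals approaching $x=1$, and then pass to the limit. The paper instead argues directly by contradiction using the \emph{second} conclusion of Proposition~\ref{prop:necessary_condition}: if $u(1)+c_\om\neq 0$, then continuity of $u$ gives $|u(x)+c_\om x|>0$ on some $[x_0,1]$, so $\om = C(u+c_\om x)$ on that interval; but $\om(1)=0$ forces $C=0$, whence $\om\equiv 0$ on $[x_0,1]$, contradicting \eqref{eqt:support_size}.

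The paper's argument is shorter---no sequence, no limiting procedure, no need to worry about the constants $C_n$ possibly varying with $n$---because the ``bad'' assumption $u(1)+c_\om\neq 0$ immediately produces a single interval on which the second half of Proposition~\ref{prop:necessary_condition} applies. Your argument, on the other hand, only ever invokes the first half of that proposition and therefore avoids the extra regularity observation (that $\om\in C^1$ on the interval) hidden in the proof of \eqref{eqt:necessary_condition_2}. Both approaches rely on the odd symmetry of $\om$ to turn ``$\om\equiv 0$ on $[x_0,1]$'' into a genuine contradiction with the normalization \eqref{eqt:support_size}; you make this explicit, while the paper leaves it implicit in the surrounding context.
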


\begin{proof}
We only need to show that $u(1) + c_\om = 0$. Assume that this is not true. By the continuity of $u$, there is some $x_0<1$ such that $|u(x) + c_\om x|>0$ on $[x_0,1]$. Then from Proposition \ref{prop:necessary_condition} we know that there is some constant $C\neq \pm\infty$ such that 
\[\om(x) = C\big(u(x) + c_\om x\big),\quad x\in [x_0,1].\]
However, since $\om(1) = 0$, we must have $C=0$, and thus $\om\equiv 0$ on $[x_0,1]$. This contradicts with the normalization condition \eqref{eqt:support_size}.
\end{proof}

\subsection{Main result}\label{sec:main_result}
Provided the discussions above, particularly Propositions \ref{prop:necessary_condition}, \ref{prop:compact_support} and \ref{prop:c_omega}, we now restate Theorem \ref{thm:main_theorem_informal} more formally as follows.

\begin{theorem}\label{thm:main_theorem}
The equation \eqref{eqt:main_equation} of the self-similar profiles for the De Gregorio model admits infinitely many solutions $(\om,c_l,c_\om)$ (distinct with respect to the rescaling \eqref{eqt:scaling}) such that $\om\in H^1(\R)$ is odd in $x$ and compactly supported on $[-1,1]$, and $c_l=c_\om\neq 0$ is given by \eqref{eqt:c_omega}. Moreover, these solutions fall into the following two classes:
\begin{enumerate}
\item \label{basic_claiss_formal} \textbf{The basic class}. There are countably infinite solutions that satisfy the followings:
\begin{itemize}
\item $\om$, $u$, and $c_\om$ satisfy the relation 
\begin{equation}\label{eqt:eigen_form}
-\lambda\, \om(x) = u(x) + c_\om x,\quad x\in [-1,1],
\end{equation}
for some constant $\lambda>0$. 
\item $\om\in H^1_0([-1,1])\cap H^2([-1,1])$, and $\om$ is smooth in the interior of $(-1,1)$. 
\end{itemize}
There is a unique solution $\om_*$ (up to a multiplicative constant) in this class that is strictly positive on $(0,1)$; all the other solutions in this class are sign-changing on $(0,1)$.
\item \label{general_claiss_formal} \textbf{The general class}. For each integer $n\geq 2$, there are uncountably infinite solutions that satisfy the followings: 
\begin{itemize}
\item $\om$, $u$, and $c_\om$ satisfy the relation 
\[-\mu_i\, \om(x) = u(x) + c_\om x, \quad x\in [-x_i,-x_{i-1}]\cup[x_{i-1},x_i],\quad i=1,\dots,n,\]
for some points $0=x_0<x_1<\dots<x_{n-1}<x_n=1$ and some sequence $\{\mu_i\}_{i=1}^n$ of positive numbers such that $\mu_i\neq \mu_{i-1}$.
\item $\om\in H^1_0([-1,1])$. For each $i=1,\dots,n$, $\om\in H^2([x_{i-1},x_i])$, and $\om$ is smooth in the interior of $(x_{i-1},x_i)$. 
\item $\om$ is sign-changing on $(0,1)$.
\end{itemize}
\end{enumerate}
\end{theorem}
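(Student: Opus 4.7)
My plan is to recast the problem as a spectral problem for the operator $\mtx{M}$ introduced above and then exploit standard spectral theory. Using $c_\om = -u(1)$ from Proposition~\ref{prop:c_omega} and $u = -(-\Delta)^{-1/2}\om$ one checks that $\mtx{M}(\om)(x) = -(u(x) + c_\om x)$ on $[-1,1]$, so the basic-class relation \eqref{eqt:eigen_form} is exactly the eigenvalue problem $\mtx{M}\om = \lambda\om$. The first task is to verify that $\mtx{M}$ is a compact, self-adjoint, positive operator on $\mathbb{V}$ with the inner product $\la f,g\ra_{\mathbb{V}} = \int_{-1}^1 f'(x)g'(x)\idiff{x}$. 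Well-definedness comes from: oddness is preserved; $f\in H^1(\R)$ gives $u\in H^2(\R)$; and $\mtx{M}(f)(\pm 1) = 0$ by construction, so the zero extension lies in $\mathbb{V}$. Compactness follows from the compact embedding $H^2([-1,1])\hookrightarrow H^1_0([-1,1])$. Self-adjointness reduces, after integration by parts (the linear-in-$x$ piece of $\mtx{M}(f)$ drops because $g(\pm 1)=0$), to the identity $\la \mtx{M}(f),g\ra_{\mathbb{V}} = -\int_{-1}^1 f'(x)\mtx{H}(g)(x)\idiff{x}$, which is symmetric in $(f,g)$ since $\mtx{H}$ is skew-adjoint and commutes with $\partial_x$. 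Positivity then follows from $\la \mtx{M}(f), f\ra_{\mathbb{V}} = -\int_{\R} f'\mtx{H}(f)\idiff{x} = \|f\|_{\dot{H}^{1/2}(\R)}^2$, computed via Plancherel.

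Once these properties are in hand, the spectral theorem furnishes a sequence of positive eigenvalues $\lambda_1 \geq \lambda_2 \geq \cdots \to 0^+$ with orthogonal eigenfunctions $\{\om_k\}\subset\mathbb{V}$, each yielding a basic-class solution after checking that $\om_k$ has support exactly $[-1,1]$ (otherwise a rescaling gives another element already in the list) and that $\partial_x\om_k(0)\neq 0$, which follows from \eqref{eqt:main_equation} at $x=0$ together with a unique-continuation argument on the ODE derived in Proposition~\ref{prop:necessary_condition}. Bootstrapping the relation $-\lambda\om = u + c_\om x$ and using that $(-\Delta)^{-1/2}$ gains one derivative yields $\om \in H^2([-1,1])$ and, by iteration, interior smoothness. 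To isolate the unique positive ground state $\om_*$ I would use the variational characterization $\lambda_1 = \sup_{f\in\mathbb{V}} \|f\|_{\dot{H}^{1/2}(\R)}^2 / \|f'\|_{L^2([-1,1])}^2$ together with a symmetric-rearrangement or positivity-improving argument: the maximizer is then unique up to sign and of one sign on $(0,1)$, while the higher eigenfunctions, being $\mathbb{V}$-orthogonal to $\om_*$, must change sign there.

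For the general class the construction is more delicate, since the \emph{same} global operator $\mtx{M}$ is forced to act as distinct scalar multiples on different sub-intervals, coupled by the fact that $u$ is nonlocally determined by all of $\om$. My strategy is to fix $n\geq 2$ and interior nodes $0 = x_0 < x_1 < \cdots < x_n = 1$ and seek $\om = \sum_{i=1}^n \om_i$ with $\om_i$ supported on $[-x_i,-x_{i-1}]\cup[x_{i-1},x_i]$, subject to $\mtx{M}(\om) = \mu_i\om_i$ on the $i$-th slab and the continuity condition $\om(x_i)=0$. Starting from a basic-class eigenfunction $\om_n$ whose $n-1$ sign changes occur at prescribed positions, one can treat the node locations $x_1,\dots,x_{n-1}$ as bifurcation parameters and the $\mu_i$ as additional unknowns, then invoke an implicit-function-theorem or Lyapunov--Schmidt reduction to bifurcate off the degenerate configuration $\mu_1=\cdots=\mu_n$ into an $(n-1)$-parameter family with $\mu_i\neq \mu_{i-1}$ on an open dense set. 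The main obstacle, as I see it, is exactly this last step: one must show that the linearized matching-and-eigenvalue system has Fredholm index zero and is invertible at the basic-class starting profile, which seems to require a careful analysis of the resolvent of $\mtx{M}$ restricted to each sub-interval, and one must separately confirm that distinct parameter choices produce solutions that remain distinct under the rescaling \eqref{eqt:scaling}.
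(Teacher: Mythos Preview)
Your overall framework for the basic class---recasting \eqref{eqt:eigen_form} as the eigenvalue problem $\mtx{M}\om=\lambda\om$, verifying that $\mtx{M}$ is compact, self-adjoint and positive on $(\mathbb{V},\langle\cdot,\cdot\rangle_{\dot H^1})$, and then invoking the spectral theorem and bootstrapping for regularity---is exactly the paper's route. Two substantive gaps remain, however.

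\textbf{Sign-changing of the non-leading eigenfunctions.} Your claim that ``the higher eigenfunctions, being $\mathbb{V}$-orthogonal to $\om_*$, must change sign on $(0,1)$'' does not follow: orthogonality here is in $\dot H^1$, i.e.\ $\int_{-1}^1\om_*'\om_k'=0$, which after integration by parts only says $\om_k$ is $L^2$-orthogonal to $\om_*''$, a sign-changing function. No conclusion about the sign of $\om_k$ on $(0,1)$ is available from this. The paper obtains sign-change by a completely different mechanism: a pointwise Tricomi-type identity (their Theorem~\ref{thm:non-degeneracy}) yields the differential inequality $\partial_x(\partial_x f/f)\le -\tfrac{1}{2\lambda^2}-\tfrac12(\partial_x f/f)^2$ between consecutive zeros, forcing any two adjacent zeros to be at distance $\le 2\pi\lambda$; combined with a comparison $\lambda_k<1/(k\pi)$ against the Dirichlet half-Laplacian, this forces $f_k$ to have $\ge\lfloor k/2\rfloor$ zeros in $(0,1)$. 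Your ``symmetric-rearrangement or positivity-improving'' suggestion for the ground state is also not immediate in this odd, compactly-supported setting; the paper instead uses a bespoke variational splitting (cut $f_*$ at its largest interior zero, translate and rescale the two pieces, and contradict maximality).

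\textbf{Nonvanishing of $c_\om$.} You never address why $c(\om_k)\neq 0$, which is essential for the statement $c_l=c_\om\neq 0$. This is not automatic; the paper proves it (Theorem~\ref{thm:cf_nonzero}) by passing to an antiderivative $\phi$ of $f$, observing that $c(f)=0$ would make $\phi$ satisfy a similar eigen-relation, and then reusing the non-degeneracy identity at $x=1$ to reach a contradiction. Relatedly, your argument that $\partial_x\om_k(0)\neq0$ via ``unique continuation on the ODE of Proposition~\ref{prop:necessary_condition}'' is circular: that ODE is only informative where $|\om|>0$ or $|u+c_\om x|>0$, precisely what fails at a degenerate zero. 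The paper's pointwise identity \eqref{eqt:non-degeneracy} handles both $\partial_x f(0)\neq0$ and the simplicity of eigenvalues in one stroke.

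For the general class your instinct---perturb off a basic eigenfunction with at least $n+1$ zeros---matches the paper, but the parametrization differs in a way that matters. You propose to fix the nodes $x_1,\dots,x_{n-1}$ and solve for the $\mu_i$; the paper instead fixes the $n$-tuple $\delta=(\delta_1,\dots,\delta_n)$ with $\sum\delta_i=0$ (encoding the ratios $\mu_i=\lambda/(1+\delta_i)$), lets the nodes be \emph{determined} as the zeros of $\mtx{M}(f_0+g)$, and runs a contraction mapping for $g\in\mathbb{V}_0^\perp$ in the $L^2$-topology. The advantage is that the linearized operator is simply $(\mtx{M}-\lambda_0 I)$ on $\mathbb{V}_0^\perp$, whose invertibility is immediate from simplicity of $\lambda_0$; the delicate part is a stability-of-zeros lemma ensuring the nodes depend Lipschitz-continuously on $g$, which in turn rests on the uniform lower bound $|\partial_x f_0|\ge c_0>0$ at every zero coming from the same Tricomi identity. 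Your formulation would require inverting a coupled system mixing node positions and eigenvalue shifts, which is doable but considerably more involved.
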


The remaining of this paper contributes to the proof of Theorem \ref{thm:main_theorem}. We will deal with the basic class in Section \ref{sec:basic_class} and the general class in Section \ref{sec:general_class}. The basic class solutions are obtained as the eigenfunctions of some compact operator, and the general class solutions are constructed as perturbations of these eigenfunctions. In particular, for each integer $n\geq2$ and each basic class solution $\om$ that has at least $2n+1$ zeros in $[-1,1]$, there is a neighborhood of $\om$ and a continuous family of $n$-tuples $(\mu_1,\dots,\mu_n)$, such that each tuple in it determines a unique general class solution in that neighborhood up to a multiplicative constant. We emphasize that, for each solution we find, the corresponding value of $c_l=c_\om$ is non-zero, so that it can be rescaled to become a solution of \eqref{eqt:main_equation} with $c_l=c_\om = -1$.

\section{The basic class}\label{sec:basic_class}
This section is devoted to proving the first part of Theorem \ref{thm:main_theorem}: The existence of countably infinite solutions in the basic class \eqref{basic_claiss_formal}, including a special solution that does not change sign on $(0,1)$. We do so by formulating the problem as an eigen-problem for a self-adjoint compact operator, and we will give detailed characterization of the eigenvalues and eigenfunctions.

\subsection{An eigenvalue problem}\label{sec:eigenproblem}
Consider the Hilbert space of odd $H^1$ functions supported on $[-1,1]$,
\[\mathbb{V} := \left\{f:\ f(x) = -f(-x),\ f\in H^1_0([-1,1]) \right\},\]
endowed with the standard $\dot{H}^1$ inner product $\la\cdot\,,\,\cdot\ra_{\dot{H}^1}$. Guided by the desired relation \eqref{eqt:eigen_form}, we define a linear map $\mtx{M}$ on $\mathbb{V}$:
\begin{equation}\label{eqt:linear_map}
\mtx{M}(f) := -\chi_{[-1,1]}\left(\frac{1}{\pi}\int_{\R}f(y)\ln|x-y|\idiff y + c(f)x\right) = \chi_{[-1,1]}\big((-\Delta)^{-1/2}f - c(f)x\big),
\end{equation}
where $\chi_{[-1,1]}$ is the indicator function of the interval $[-1,1]$, and $c(f)$ is given by 
\begin{equation}\label{eqt:cf}
c(f) := -\frac{1}{\pi}\int_{\R}f(y)\ln|1-y|\idiff y = (-\Delta)^{-1/2}f(1).
\end{equation}
Note that for any $f\in \mathbb{V}$, we can calculate that 
\begin{align*}
\mtx{M}(f)(-x) &= -\frac{1}{\pi}\int_{\R}f(y)\ln|x+y|\idiff y + c(f)x\\
 &= \frac{1}{\pi}\int_{\R}f(y)\ln|x-y|\idiff y + c(f)x = -\mtx{M}(f)(x),\quad x\in[-1,1],
\end{align*}
and 
\[\mtx{M}(f)(1) = -\frac{1}{\pi}\int_{\R}f(y)\ln|1-y|\idiff y - c(f) = 0.\]
Also, it is not hard to check that $\|\mtx{M}(f)\|_{H^1}\lesssim \|f\|_{H^1}$. Hence, $\mtx{M}$ maps $\mathbb{V}$ into itself. By these properties, we also find that
\begin{align*}
\int_{\R}\phi_x(x)\ \mtx{M}(f)(x) \idiff x &= \int_{-1}^1\phi_x(x)\ \mtx{M}(f)(x) \idiff x = -\int_{-1}^1\phi(x)\ \partial_x\left((-\Delta)^{-1/2}f - c(f)x\right) \idiff x \\
&= \int_{-1}^1\phi(x)\ (\mtx{H}(f)(x)+c(f)) \idiff x = \int_{\R} \phi(x)\ \chi_{[-1,1]}(\mtx{H}(f)(x)+c(f)) \idiff x,
\end{align*}
for any smooth function $\phi$. Therefore, we have at least in the weak sense that
\begin{equation}\label{eqt:linear_map_derivative}
\partial_x \mtx{M}(f) = - \chi_{[-1,1]}\big( \mtx{H}(f)+c(f) \big).
\end{equation}
This differential relation will be useful in what follows.

Now, finding a solution $(\om, c_l, c_\om)$ that solves the equation \eqref{eqt:eigen_form} reduces to finding an eigen-pair $(\lambda, f)$ that solves the eigenvalue problem
\begin{equation}\label{eqt:eigen_problem}
\lambda f = \mtx{M}(f),\quad \lambda\in \R,\ f\in \mathbb{V}.
\end{equation}
Indeed, $\om = f$ is a solution to \eqref{eqt:main_equation} with $c_l=c_\om = c(f)$.
To show that $\mtx{M}$ admits eigenfunctions in $\mathbb{V}$, we need to prove some nicer properties of $\mtx{M}$.

\begin{lemma} The linear map $\mtx{M}: \mathbb{V}\to \mathbb{V}$ is self-adjoint and positive semidefinite with respect to the inner product $\langle \cdot\,, \cdot \rangle_{\dot{H}^1}$. More precisely, 
\[\langle f, \mtx{M}(g) \rangle_{\dot{H}^1} = \langle \mtx{M}(f), g \rangle_{\dot{H}^1} = \langle f, g \rangle_{\dot{H}^{1/2}(\R)},\quad \forall f,g\in \mathbb{V},\]
and
\[\langle f, \mtx{M}(f) \rangle_{\dot{H}^1} = \|f\|_{\dot{H}^{1/2}(\R)}^2 \geq 0,\quad \forall f\in \mathbb{V}.\]
\end{lemma}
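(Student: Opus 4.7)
The plan is to leverage the derivative identity \eqref{eqt:linear_map_derivative} to reduce the $\dot{H}^1$ inner product $\langle f,\mtx{M}(g)\rangle_{\dot{H}^1}$ to a pairing between $f_x$ and $\mtx{H}(g)$, which by the Fourier multiplier identity $\mtx{H}\circ\partial_x=(-\Delta)^{1/2}$ will turn out to equal $\langle f,g\rangle_{\dot{H}^{1/2}(\R)}$. Because the latter is manifestly symmetric in $(f,g)$ and non-negative when $f=g$, both self-adjointness and positive semidefiniteness of $\mtx{M}$ will drop out simultaneously.

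Concretely, I would proceed in the following steps. First, write
\[\langle f,\mtx{M}(g)\rangle_{\dot{H}^1} \;=\; \int_{\R} f_x(x)\,\partial_x\mtx{M}(g)(x)\idiff x\]
and substitute \eqref{eqt:linear_map_derivative} to obtain
\[\langle f,\mtx{M}(g)\rangle_{\dot{H}^1} \;=\; -\int_{-1}^{1} f_x(x)\bigl(\mtx{H}(g)(x)+c(g)\bigr)\idiff x.\]
Second, discard the rank-one correction: since $f\in H^1_0([-1,1])$, the term $c(g)\int_{-1}^1 f_x\idiff x=c(g)\bigl(f(1)-f(-1)\bigr)$ is zero. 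Because $f_x$ is supported on $[-1,1]$, I can extend the remaining integral to the whole line, arriving at
\[\langle f,\mtx{M}(g)\rangle_{\dot{H}^1} \;=\; -\int_{\R} f_x(x)\,\mtx{H}(g)(x)\idiff x.\]
Third, invoke the standard facts (readable off the Fourier symbols $-\mathrm{i}\,\mathrm{sgn}(\xi)$ for $\mtx{H}$ and $|\xi|$ for $(-\Delta)^{1/2}$) that $\mtx{H}$ is skew-adjoint on $L^2(\R)$ and that $\mtx{H}(f_x)=(-\Delta)^{1/2}f$, to transfer the derivative onto $g$:
\[-\int_{\R} f_x\,\mtx{H}(g)\idiff x \;=\; \int_{\R} \mtx{H}(f_x)\,g\idiff x \;=\; \int_{\R}\bigl((-\Delta)^{1/2}f\bigr)\,g\idiff x \;=\; \langle f,g\rangle_{\dot{H}^{1/2}(\R)}.\]
Finally, symmetry of the $\dot{H}^{1/2}$ inner product gives $\langle \mtx{M}(f),g\rangle_{\dot{H}^1}=\langle f,g\rangle_{\dot{H}^{1/2}(\R)}$ as well, hence $\mtx{M}$ is self-adjoint; specializing $g=f$ yields $\langle f,\mtx{M}(f)\rangle_{\dot{H}^1}=\|f\|_{\dot{H}^{1/2}(\R)}^2\geq 0$.

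There is no serious obstacle here, because $f,g\in H^1(\R)$ with compact support forces all ingredients ($\mtx{H}(g)$, $(-\Delta)^{1/2}f$, $\mtx{H}(f_x)$) to lie in $L^2(\R)$, so every pairing is automatically well-defined and Plancherel/duality applies without qualification. The one step requiring mild care is the weak-sense justification of \eqref{eqt:linear_map_derivative}, but that has already been carried out just before the lemma. The real conceptual content is simply that the rank-one correction $c(f)x$ in the definition of $\mtx{M}$ is engineered exactly so that the boundary terms produced by integrating against $f\in H^1_0([-1,1])$ vanish, leaving the clean, symmetric, non-negative quadratic form $\|\cdot\|_{\dot{H}^{1/2}(\R)}^2$.
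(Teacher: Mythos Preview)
Your proof is correct and follows essentially the same route as the paper: apply the derivative formula \eqref{eqt:linear_map_derivative}, observe that the rank-one correction $c(g)$ integrates to zero against $f_x$ since $f\in H^1_0([-1,1])$, and identify the remaining pairing with $\langle f,g\rangle_{\dot{H}^{1/2}(\R)}$. The only cosmetic difference is that the paper integrates by parts in $x$ to reach $\int_{-1}^1 f\,\partial_x\mtx{H}(g)\,dx$, whereas you use the skew-adjointness of $\mtx{H}$ on $L^2(\R)$ to reach $\int_{\R}\mtx{H}(f_x)\,g\,dx$; these are equivalent one-line manipulations.
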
 

\begin{proof}
We only consider real-valued functions. The self-adjoint property can be justified by a direct calculation:
\begin{align*}
\langle f, \mtx{M}(g) \rangle_{\dot{H}^1} &= \int_{\R}f_x(x)\ \partial_x \mtx{M}(g)\idiff x = -\int_{-1}^1f_x(x) (\mtx{H}(g)(x)+c(f))\idiff x\\
&= \int_{-1}^1f(x)\partial_x\mtx{H}(g)(x)\idiff x = \langle f, g \rangle_{\dot{H}^{1/2}(\R)} = \langle \mtx{M}(f), g \rangle_{\dot{H}^1}.
\end{align*}
We have used the formula \eqref{eqt:linear_map_derivative}. In particular, 
\[\langle f, \mtx{M}(f) \rangle_{\dot{H}^1} = \langle f, f \rangle_{\dot{H}^{1/2}(\R)} = \|f\|_{\dot{H}^{1/2}(\R)}^2.\]
which proves the positiveness of $\mtx{M}$.
\end{proof}

We can actually further prove that $\mtx{M}$ is positive definite by showing that $\|f\|_{\dot{H}^{1/2}(\R)}\gtrsim \|f\|_{\dot{H}^{1/2}([-1,1])}$. See Theorem \ref{thm:norm_equivalence} in the next subsection.

\begin{lemma}\label{lem:compactness}
The linear map $\mtx{M}: \mathbb{V}\to \mathbb{V}$ is compact. In particular, 
\[\|\mtx{M}(f)\|_{\dot{H}^1}\leq \|f\|_{L^2}\,,\quad \|\mtx{M}(f)\|_{\dot{H}^2([-1,1])}\leq \|f\|_{\dot{H}^1}\,, \]
for all $f\in \mathbb{V}$.
\end{lemma}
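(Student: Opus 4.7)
The plan is to prove the two quantitative bounds first and then deduce compactness from the Rellich--Kondrachov theorem. Both bounds reduce, via the derivative identity \eqref{eqt:linear_map_derivative}, to $L^2(\R)$-estimates for the Hilbert transform, which is an isometry and commutes with differentiation.

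For the first bound, I would start from $\|\mtx{M}(f)\|_{\dot{H}^1}^2 = \int_{-1}^1 (\mtx{H}(f)+c(f))^2\idiff x$, which follows directly from \eqref{eqt:linear_map_derivative}. Expanding the square produces a cross term $2c(f)\int_{-1}^1 \mtx{H}(f)\idiff x = 2c(f)(u(1)-u(-1))$ for the odd primitive $u$ of $\mtx{H}(f)$; combining $u(-1)=-u(1)$ from Lemma \ref{lem:decay_property} with the defining relation $c(f)=-u(1)$ from \eqref{eqt:cf} turns this cross term into $-4c(f)^2$, which together with the $+2c(f)^2$ from $\int_{-1}^1 c(f)^2\idiff x$ yields a net contribution of $-2c(f)^2$. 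What remains is therefore
\[ \|\mtx{M}(f)\|_{\dot{H}^1}^2 = \int_{-1}^1 \mtx{H}(f)^2\idiff x - 2c(f)^2 \le \|\mtx{H}(f)\|_{L^2(\R)}^2 = \|f\|_{L^2}^2, \]
and the optimal constant one reflects exactly the cancellation built into the normalization of $c(f)$.

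For the second bound, I would differentiate \eqref{eqt:linear_map_derivative} once more, understood in the same weak sense as \eqref{eqt:linear_map_derivative} itself; since $\partial_x$ commutes with $\mtx{H}$ and $f_x\in L^2(\R)$ for $f\in\mathbb{V}$, this gives $\partial_{xx}\mtx{M}(f) = -\mtx{H}(f_x)$ on $(-1,1)$, and the $L^2(\R)$-isometry of $\mtx{H}$ then delivers
\[ \|\mtx{M}(f)\|_{\dot{H}^2([-1,1])} \le \|\mtx{H}(f_x)\|_{L^2(\R)} = \|f\|_{\dot{H}^1}. \]

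Compactness then follows routinely. For any bounded sequence $\{f_n\}\subset\mathbb{V}$, the sequence $\{\mtx{M}(f_n)\}$ is bounded in $H^2([-1,1])\cap\mathbb{V}$, so by Rellich--Kondrachov a subsequence converges in $H^1([-1,1])$; the odd symmetry and the Dirichlet condition $\mtx{M}(f_n)(\pm 1)=0$ persist in the $H^1$-limit via the continuous embedding $H^1([-1,1])\hookrightarrow C([-1,1])$, placing the limit back in $\mathbb{V}$. The only mildly delicate point I foresee is rigorously justifying the weak second-derivative formula $\partial_{xx}\mtx{M}(f)=-\mtx{H}(f_x)$ for $f$ only in $H^1$, but this is a standard duality argument against test functions in $C^\infty_c((-1,1))$ and should not present a real difficulty.
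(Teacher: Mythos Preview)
Your proposal is correct and follows essentially the same route as the paper: expand $\int_{-1}^1(\mtx{H}(f)+c(f))^2\idiff x$ and use the cancellation in the cross term to obtain $\|\mtx{H}(f)\|_{L^2([-1,1])}^2-2c(f)^2$, then differentiate \eqref{eqt:linear_map_derivative} once more for the second bound, and conclude compactness from the resulting $H^2$-control. The only cosmetic difference is that the paper handles the cross term via $\int_{-1}^1\partial_x\mtx{M}(f)\idiff x=0$ (since $\mtx{M}(f)(\pm1)=0$) rather than through the primitive $u$, and it simply asserts compactness without spelling out Rellich--Kondrachov.
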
 

\begin{proof}
The first inequality follows directly from the derivative formula \eqref{eqt:linear_map_derivative}:
\begin{align*}
\|\mtx{M}(f)\|_{\dot{H}^1}^2 &= \int_{-1}^1\left(\mtx{H}(f)(x)^2 + 2c(f)\, \mtx{H}(f)(x) + c(f)^2\right)\idiff x\\
&= \|\mtx{H}(f)\|_{L^2([-1,1])}^2 - 2c(f) \int_{-1}^1\partial_x\mtx{M}(f)\idiff x - 2c(f)^2\\
&= \|\mtx{H}(f)\|_{L^2([-1,1])}^2 - 2c(f)^2\\
&\leq \|f\|_{L^2}^2.
\end{align*}
The second inequality can be proved similarly using the formula \eqref{eqt:linear_map_derivative}:
\[\|\mtx{M}(f)\|_{\dot{H}^2([-1,1])} = \|\mtx{H}(f)\|_{\dot{H}^1([-1,1])}\leq \|\mtx{H}(f)\|_{\dot{H}^1(\R)} = \|f\|_{\dot{H}^1}.\]
The compactness of $\mtx{M}: \mathbb{V}\to \mathbb{V}$ follows immediately.
\end{proof}

By the standard spectral theory of self-adjoint compact operators, we have the following result, which proves the existence of countably infinite solutions in the basic class \eqref{basic_claiss_formal} with the desired regularity.

\begin{theorem}\label{thm:eigenvalue_problem}
The eigenvalue problem \eqref{eqt:eigen_problem} admits countably infinite eigen-pairs $(\lambda_i,f_i)\in \R\times \mathbb{V}$ such that $\lambda_i>0$, $f_i\in H^1_0(\R)\cap H^2([-1,1])\cap C^\infty([-r,r])$ for any $r\in(0,1)$, and $\la f_i\,,\,f_j\ra_{\dot{H}^1}=0, i\neq j$. In particular, the leading eigenfunction $f_*$ corresponding to the largest eigenvalue $\lambda_*$ is a maximizer of the variational problem:
\begin{equation}\label{eqt:variational_form}
\sup_{f\in \mathbb{V}\backslash\{0\}} \frac{\langle f, \mtx{M}(f) \rangle_{\dot{H}^1}}{\|f\|_{\dot{H}^1(\R)}^2} = \sup_{f\in \mathbb{V}\backslash\{0\}} \frac{\|f\|_{\dot{H}^{1/2}(\R)}^2}{\|f\|_{\dot{H}^1(\R)}^2}.
\end{equation}
\end{theorem}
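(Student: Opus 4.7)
My approach combines the spectral theorem for compact self-adjoint operators on a Hilbert space with an interior regularity bootstrap that exploits the off-diagonal smoothness of the Hilbert transform.

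First, I note that $\mathbb{V}$ equipped with $\la\cdot\,,\,\cdot\ra_{\dot{H}^1}$ is a separable Hilbert space (a closed subspace of $H^1_0([-1,1])$), and by the lemmas already established $\mtx{M}\colon\mathbb{V}\to\mathbb{V}$ is self-adjoint, positive semidefinite, and compact. I would first upgrade positive semidefiniteness to positive definiteness: for any nonzero $f\in\mathbb{V}$, the identity $\la f,\mtx{M}(f)\ra_{\dot{H}^1}=\|f\|_{\dot{H}^{1/2}(\mathbb{R})}^2=\int_{\mathbb{R}}|\xi|\,|\hat{f}(\xi)|^2\,d\xi$ is strictly positive, since $\hat{f}$ cannot vanish a.e.\ unless $f\equiv 0$. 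Hence $\mtx{M}$ is injective. The Hilbert--Schmidt spectral theorem then yields a sequence of real eigenvalues $\lambda_i\to 0$ together with a $\dot{H}^1$-orthonormal family of corresponding eigenfunctions $\{f_i\}\subset\mathbb{V}$; because $\mathbb{V}$ is infinite-dimensional, an injective compact operator must produce infinitely many nonzero eigenvalues, all strictly positive by positive definiteness, and orthogonality of eigenfunctions with distinct eigenvalues is automatic from self-adjointness.

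The variational characterization follows from the standard fact that the operator norm of a compact positive self-adjoint operator equals its largest eigenvalue and is attained on the unit sphere: a maximizing sequence $\{g_n\}$ with $\|g_n\|_{\dot{H}^1}=1$ admits a weakly convergent subsequence $g_{n_k}\rightharpoonup f_*$; compactness gives $\mtx{M}(g_{n_k})\to\mtx{M}(f_*)$ strongly in $\dot{H}^1$, and lower semicontinuity of the norm together with continuity of $\la\cdot,\mtx{M}(\cdot)\ra_{\dot{H}^1}$ show that $f_*$ with $\|f_*\|_{\dot{H}^1}\le 1$ realizes the supremum, forcing $\mtx{M}(f_*)=\lambda_* f_*$.

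Finally, for regularity: $f_i\in H^1_0([-1,1])$ is built into the definition of $\mathbb{V}$, and the relation $\lambda_i f_i=\mtx{M}(f_i)$ together with the second estimate in Lemma~\ref{lem:compactness} immediately gives $f_i\in H^2([-1,1])$. For the $C^\infty$ claim on $[-r,r]$ with $r<1$, I would run a cutoff bootstrap. Fix a smooth cutoff $\chi\in C_c^\infty(-1,1)$ with $\chi\equiv 1$ on a slightly larger interval than $[-r,r]$; differentiating the eigenvalue equation and using \eqref{eqt:linear_map_derivative} yields, on $(-1,1)$,
\[
-\lambda_i f_i'(x)=\mtx{H}(\chi f_i)(x)+\mtx{H}((1-\chi)f_i)(x)+c(f_i).
\]
The term $\mtx{H}((1-\chi)f_i)$ is $C^\infty$ on $[-r,r]$ by differentiation under the integral sign, since the denominator $x-y$ stays bounded away from zero when $x\in[-r,r]$ and $y\in\mathrm{supp}(1-\chi)$. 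The term $\mtx{H}(\chi f_i)$ inherits the $H^k(\mathbb{R})$ regularity of $\chi f_i$ because $\mtx{H}$ is bounded on $H^k$. Choosing a nested family of cutoffs with supports shrinking toward $[-r,r]$, one gains one derivative at each stage, so $f_i\in C^\infty([-r,r])$. The main obstacle is executing this bootstrap cleanly: one must interleave a nested sequence of cutoff supports between $[-r,r]$ and $(-1,1)$ so that at each step the current cutoff is supported where $f_i$ is already known to have the required Sobolev regularity, and track the off-diagonal smoothness of the Hilbert transform carefully. The spectral and variational parts are routine once compactness, self-adjointness, and positivity are in hand.
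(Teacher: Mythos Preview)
Your spectral and variational arguments are correct and align with the paper's strategy: both rely on the compactness, self-adjointness, and positivity of $\mtx{M}$ established in the preceding lemmas, with the existence of infinitely many eigenvalues following from injectivity (the paper defers this to the later norm-equivalence Theorem~\ref{thm:norm_equivalence}, while you argue it directly via $\|f\|_{\dot H^{1/2}(\mathbb{R})}>0$ for $f\neq 0$, which is equally valid).

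The interior regularity argument is where you diverge. You propose a one-derivative-at-a-time cutoff bootstrap: split $f_i = \chi f_i + (1-\chi)f_i$, observe that $\mtx{H}((1-\chi)f_i)$ is smooth on $[-r,r]$ by off-diagonal smoothness, and use $\mtx{H}$-boundedness on $H^k$ to gain a derivative, iterating with nested cutoffs. This is correct and is the generic elliptic-bootstrap approach; the bookkeeping with shrinking supports that you flag as the main obstacle is real but routine. The paper instead differentiates the eigenvalue relation \emph{twice} and invokes the identity $\mtx{H}(\mtx{H}(f)) = -f$ to obtain on $[-r,r]$ the closed recursion
\[
\partial_x^2 f = -\lambda^{-2} f - \lambda^{-2} g_f + \lambda^{-2} c(f)\, h_f,
\]
where $g_f = \mtx{H}\big((1-\chi_{[-1,1]})\mtx{H}(f)\big)$ and $h_f = \mtx{H}(\chi_{[-1,1]})$ are manifestly $C^\infty$ on $[-r,r]$. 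This gives $\partial_x^{k+2} f$ directly in terms of $\partial_x^k f$ plus smooth remainders, so the induction requires no cutoffs at all. The paper's route is cleaner and exploits the specific algebraic structure $\mtx{H}^2 = -I$; your route is more portable to operators without such an identity but costs the nested-cutoff machinery.
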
 

\begin{proof}
The fact that $\mtx{M}$ has infinite many positive eigenvalues, i.e.\:$\mtx{M}$ is not of finite rank, will be proved in Corollary \ref{cor:eigenvalue_comparison} in the next subsection. In fact, Theorem \ref{thm:norm_equivalence} implies that $0$ is not an eigenvalue of $\mtx{M}$. 

What is yet to be proved is the infinite regularity of an eigenfunction on $[-r,r]$ for any $r\in(0,1)$. Let $(\lambda,f)$ be an eigen-pair of $\mtx{M}$ with $\lambda>0$. We already know that $f\in H^2([-1,1])$, and thus $f\in C^1([-1,1])$. From the derivative formula $\eqref{eqt:linear_map_derivative}$ we find that, on $[-r,r]$, 
\begin{align*}
\partial_x^2f &=  -\lambda^{-1} \mtx{H}(f_x)\\
&= \lambda^{-2} \mtx{H}\big(\chi_{[-1,1]}(\mtx{H}(f) + c(f))\big)\\
&= \lambda^{-2} \left(\mtx{H}(\mtx{H}(f)) - \mtx{H}((1-\chi_{[-1,1]})\mtx{H}(f)) + c(f)\mtx{H}(\chi_{[-1,1]})\right)\\
&=: -\lambda^{-2}f - \lambda^{-2} g_f +  \lambda^{-2}c(f)h_f,
\end{align*}
where 
\[g_f(x) := \mtx{H}\left((1-\chi_{[-1,1]})\mtx{H}(f)\right)(x) = \frac{1}{\pi}\int_{\R\backslash[-1,1]}\frac{\mtx{H}(f)(y)}{x-y}\idiff y\]
and 
\[h_f(x) := \mtx{H}(\chi_{[-1,1]})(x) = \frac{1}{\pi}\ln\left|\frac{x+1}{x-1}\right|\]
are both infinitely differentiable on $[-r,r]$. This inductively implies that 
\[\partial_x^{k+2}f = -\lambda^{-2}\partial_x^kf - \lambda^{-2}\partial_x^kg_f + \lambda^{-2}c(f)\partial_x^kh_f \in C([-r,r])\]
for all $k\geq0$, and thus $f\in C^\infty([-r,r])$.
\end{proof}

As claimed before, we shall give insightful characterizations of the eigenfunctions of $\mtx{M}$. The next theorem provides a powerful point estimate for these eigenfunctions that will be used repeatedly in later sections. In particular, it shows that all zeros of an eigenfunction in $[-1,1]$ are simple. 

\begin{theorem}\label{thm:non-degeneracy}
Let $(\lambda,f)$ be an eigen-pair of $\mtx{M}$ with $\lambda>0$. Then, for any $x\in (-1,1)$, it holds that 
\begin{equation}\label{eqt:point_estimate}
\lambda^2\big[(\partial_xf(x))^2 - 2f(x)\partial_x^2f(x)\big] = f(x)^2 + c(f)^2 + \frac{\lambda}{\pi}\int_{\R} \frac{(f(x)-f(y))^2}{(x-y)^2}\idiff y.
\end{equation}
In particular, for any $r\in [-1,1]$ such that $f(r)=0$, 
\begin{equation}\label{eqt:non-degeneracy}
(\partial_xf(r))^2 = \frac{c(f)^2}{\lambda^2} + \frac{1}{\pi\lambda}\int_{-1}^1 \frac{f(x)^2}{(r-x)^2}\idiff x\geq \frac{c(f)^2}{\lambda^2} + \frac{\lambda}{4\pi}\|f\|_{\dot{H}^1}^2.
\end{equation}
Here in the case of $r= \pm1$, $\partial_xf(r)$ denotes the one-sided derivatives $\partial_{x-}f(1)$ and $\partial_{x+}f(-1)$.
\end{theorem}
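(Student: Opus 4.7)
The plan is to prove the pointwise identity \eqref{eqt:point_estimate} by combining the eigen-equation $\lambda f = \mtx{M}(f)$ and its first two pointwise derivatives on $(-1,1)$ with two classical Hilbert-transform identities: Cotlar's identity $(\mtx{H}g)^2 - g^2 = 2\mtx{H}(g\,\mtx{H}(g))$ for real $g \in L^2(\R)$, and the pointwise fractional-Leibniz (Stroock--Varopoulos) identity
\[
2 g(x)\, \mtx{H}(g_x)(x) - \partial_x \mtx{H}(g^2)(x) = \frac{1}{\pi}\int_\R \frac{(g(x)-g(y))^2}{(x-y)^2}\idiff y,
\]
which follows by elementary algebra from the singular-integral representation $\mtx{H}(g_x)(x) = \partial_x\mtx{H}(g)(x) = \pi^{-1} P.V.\int_\R (g(x)-g(y))/(x-y)^2\idiff y$ (expand $(g(x)-g(y))^2$ and match terms).

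First I would record the derivative identities on $(-1,1)$:
\[
\lambda f_x = -\mtx{H}(f) - c(f), \qquad \lambda f_{xx} = -\mtx{H}(f_x),
\]
the first being \eqref{eqt:linear_map_derivative} and the second obtained by differentiating once more (valid pointwise thanks to the $C^\infty$ interior regularity from Theorem \ref{thm:eigenvalue_problem}). Squaring and combining yields
\[
\lambda^2\big[(\partial_x f)^2 - 2 f \partial_x^2 f\big] = (\mtx{H}(f) + c(f))^2 + 2\lambda f\, \mtx{H}(f_x).
\]
Cotlar together with linearity rewrites $(\mtx{H}(f)+c(f))^2 = f^2 + c(f)^2 + 2\mtx{H}\big(f(\mtx{H}(f)+c(f))\big)$, and the pivotal observation is that $f\,(\mtx{H}(f)+c(f))$ is supported in $[-1,1]$ (since $f$ is), where the eigen-equation identifies it with $-\lambda f f_x = -\tfrac{\lambda}{2}(f^2)_x$. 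Because $f^2 \in H^1_0([-1,1])$, the zero extension of $(f^2)_x$ to $\R$ is again an $L^2$ derivative, so $\mtx{H}\big(f(\mtx{H}(f)+c(f))\big) = -\tfrac{\lambda}{2}\partial_x\mtx{H}(f^2)$. Plugging this in and applying the fractional-Leibniz identity with $g=f$ converts the bracket $2\lambda f\,\mtx{H}(f_x) - \lambda\partial_x\mtx{H}(f^2)$ into $(\lambda/\pi)\int (f(x)-f(y))^2/(x-y)^2\idiff y$, giving \eqref{eqt:point_estimate}.

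For \eqref{eqt:non-degeneracy} I would set $x = r$ in \eqref{eqt:point_estimate}: the hypothesis $f(r) = 0$ annihilates the $f f_{xx}$ term, and $\mathrm{supp}(f)\subset[-1,1]$ reduces the integral to $[-1,1]$. At $r = \pm 1$ a one-sided limit is taken, which is legitimate since $f \in H^2([-1,1])$ makes $\partial_x f$ continuous up to the boundary while $f(x) f_{xx}(x) \to 0$ despite a potential logarithmic blow-up of $f_{xx}$ at the endpoints (noting $|f(x)| \lesssim |x \mp 1|$ there). The stated lower bound then follows from the elementary estimate $(r - y)^2 \leq 4$ on $[-1,1]^2$, which gives $\int_{-1}^1 f^2/(r-y)^2\idiff y \geq \tfrac14 \|f\|_{L^2}^2$, combined with $\|f\|_{L^2}^2 \geq \lambda^2 \|f\|_{\dot{H}^1}^2$, itself an immediate consequence of Lemma \ref{lem:compactness}: $\lambda\|f\|_{\dot{H}^1} = \|\mtx{M}(f)\|_{\dot{H}^1} \leq \|f\|_{L^2}$. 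The principal insight I expect to be non-obvious is that the support of $f$ lets one compress the product $2\mtx{H}\big(f(\mtx{H}(f)+c(f))\big)$ into $-\lambda\partial_x\mtx{H}(f^2)$; this is precisely what makes the right-hand side collapse to the Dirichlet-form integral after a final application of the Stroock--Varopoulos identity.
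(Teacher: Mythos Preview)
Your proof is correct and follows essentially the same route as the paper: both arguments rest on Tricomi/Cotlar's identity $(\mtx{H}g)^2-g^2=2\mtx{H}(g\,\mtx{H}g)$ together with the pointwise expansion of $(-\Delta)^{1/2}(f^2)$ into $2f\,(-\Delta)^{1/2}f$ minus the Dirichlet-form integral, and both use the support of $f$ to identify $f(\mtx{H}f+c(f))$ with $-\tfrac{\lambda}{2}\partial_x(f^2)$ on all of $\R$. The only organizational difference is that the paper computes the pivot quantity $\mtx{H}(\partial_x(f^2))$ in two ways and equates, while you expand the left-hand side of \eqref{eqt:point_estimate} directly; and at the endpoints $r=\pm1$ the paper reads off \eqref{eqt:non-degeneracy} from the intermediate identity $\mtx{H}(\partial_x(f^2))=-\lambda(\partial_xf)^2+\lambda^{-1}(f^2+c(f)^2)$ (which contains no $f_{xx}$), whereas you take a one-sided limit in \eqref{eqt:point_estimate} and justify $f\,f_{xx}\to 0$ via the logarithmic singularity of $\mtx{H}(f_x)$ induced by the jump of $f_x$ at $\pm1$ --- a valid argument, though the paper's route sidesteps this issue entirely.
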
 

\begin{proof}
With the regularity of $f$ proved in Theorem \ref{thm:eigenvalue_problem}, it is not hard to check that $f^2\in H^2(\R)$. We can thus consider the function $(-\Delta)^{1/2}f^2 = \mtx{H}(\partial_x(f^2))$ at every single point without worrying about infinitely large values. We apply the formula $\eqref{eqt:linear_map_derivative}$ to derive the following on $[-1,1]$:
\begin{equation}\label{eqt:point_estimate_midstep}
\begin{split}
\mtx{H}(\partial_x(f^2)) &= 2\mtx{H}(f\partial_xf) = -2\lambda^{-1}\mtx{H}\big(f(\mtx{H}(f)+c(f))\big)\\
&= -\lambda^{-1}\big(\mtx{H}(f)^2 - f^2 + 2c(f)\mtx{H}(f)\big)= -\lambda (\partial_xf)^2 + \lambda^{-1}\big(f^2 + c(f)^2\big).
\end{split}
\end{equation}
Here we have used Tricomi's identity that $\mtx{H}(g\mtx{H}(g)) = (\mtx{H}(g)^2 - g^2)/2$ for any $g\in L^2(\R)$. On the other hand, for $x\in (-1,1)$, 
\begin{align*}
\mtx{H}(\partial_x(f^2))(x) &= (-\Delta)^{1/2}(f^2)(x) \\
&= \frac{1}{\pi}\text{P.V.}\int_{\R}\frac{f(x)^2-f(y)^2}{(x-y)^2}\idiff y \\
&= 2f(x)\cdot \frac{1}{\pi}\text{P.V.}\int_{\R}\frac{f(x)-f(y)}{(x-y)^2}\idiff y - \frac{1}{\pi}\int_{\R}\frac{(f(x)-f(y))^2}{(x-y)^2}\idiff y \\
&= 2f(x)\, (-\Delta)^{1/2}f(x) - \frac{1}{\pi}\int_{\R}\frac{(f(x)-f(y))^2}{(x-y)^2}\idiff y.
\end{align*}
It again follows from the derivative formula $\eqref{eqt:linear_map_derivative}$ that
\[(-\Delta)^{1/2}f(x) = \partial_x\mtx{H}(f)(x) = -\lambda\partial_x^2f(x), \quad x\in (-1,1).\]
Combining the calculations above yields \eqref{eqt:point_estimate}. 

Furthermore, for any $r\in [-1,1]$ such that $f(r)=0$, we have
\[\mtx{H}(\partial_x(f^2))(r) = (-\Delta)^{1/2}(f^2)(r) = \frac{1}{\pi}\text{P.V.}\int_{\R}\frac{f(r)^2-f(x)^2}{(r-x)^2}\idiff x = -\frac{1}{\pi}\int_{-1}^1\frac{f(x)^2}{(r-x)^2}\idiff x.\]
It then follows from \eqref{eqt:point_estimate_midstep} and Lemma \ref{lem:compactness} that 
\begin{equation}
(\partial_xf(r))^2 - \frac{c(f)^2}{\lambda^2} = \frac{1}{\pi\lambda}\int_{-1}^1 \frac{f(x)^2}{(r-x)^2}\idiff x \geq \frac{1}{4\pi\lambda}\|f\|_{L^2}^2\geq \frac{1}{4\pi\lambda} \|\mtx{M}(f)\|_{\dot{H}^1}^2 = \frac{\lambda}{4\pi} \|f\|_{\dot{H}^1}^2,
\end{equation}
which is \eqref{eqt:non-degeneracy}. 
\end{proof}

An immediate consequence of Theorem \ref{thm:non-degeneracy} is that all eigenvalues of $\mtx{M}$ are simple.

\begin{corollary}
For each positive eigenvalue $\lambda$ of $\mtx{M}$, the corresponding non-trivial eigenfunction is unique up to a multiplicative constant. 
\end{corollary}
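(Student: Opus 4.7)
The plan is to exhibit any two eigenfunctions of $\mtx{M}$ sharing the same positive eigenvalue as linearly dependent by exploiting the sharp pointwise lower bound \eqref{eqt:non-degeneracy} at the distinguished zero $x=0$ forced by odd symmetry.

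First I would observe that every $f\in\mathbb{V}$ vanishes at the origin, so if $(\lambda,f)$ is an eigen-pair of $\mtx{M}$ with $\lambda>0$ and $f\not\equiv 0$, applying the inequality \eqref{eqt:non-degeneracy} with $r=0$ yields
\[
(\partial_xf(0))^2 \;\geq\; \frac{c(f)^2}{\lambda^2} + \frac{\lambda}{4\pi}\|f\|_{\dot{H}^1}^2 \;>\; 0,
\]
since $\|f\|_{\dot{H}^1}>0$. In particular $\partial_xf(0)\neq 0$ for every non-trivial eigenfunction.

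Next, suppose for contradiction that $f_1,f_2\in\mathbb{V}$ are two non-trivial eigenfunctions of $\mtx{M}$ both corresponding to the same eigenvalue $\lambda>0$. Since the $\lambda$-eigenspace is a linear subspace of $\mathbb{V}$, the combination
\[
g \;\defeq\; \partial_xf_2(0)\cdot f_1 \;-\; \partial_xf_1(0)\cdot f_2
\]
is again an eigenfunction of $\mtx{M}$ with eigenvalue $\lambda$. By construction, $g(0)=0$ and $\partial_xg(0)=0$. Applying the previous step to $g$, if $g\not\equiv 0$, then $(\partial_xg(0))^2 \geq \frac{\lambda}{4\pi}\|g\|_{\dot{H}^1}^2>0$, contradicting $\partial_xg(0)=0$. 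Therefore $g\equiv 0$, which together with $\partial_xf_1(0),\partial_xf_2(0)\neq 0$ shows that $f_1$ and $f_2$ are proportional.

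The argument is essentially one line once Theorem~\ref{thm:non-degeneracy} is in hand, so I do not expect any serious obstacle. The only point worth emphasizing is that the odd symmetry of elements of $\mathbb{V}$ supplies a canonical zero at $x=0$ at which the non-degeneracy estimate is both valid (as an interior point of $[-1,1]$) and non-trivial, turning the quantitative lower bound into the qualitative statement that the eigenspace cannot contain a function with a double zero at the origin. This forces simplicity of every positive eigenvalue.
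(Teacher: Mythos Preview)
Your proof is correct and follows essentially the same approach as the paper: both arguments use the odd symmetry to obtain the canonical zero at $x=0$, apply the non-degeneracy estimate \eqref{eqt:non-degeneracy} there to conclude that every non-trivial eigenfunction has non-vanishing derivative at the origin, and then form a linear combination in the eigenspace with vanishing derivative at $0$ to force triviality. The paper frames it as choosing a constant $C$ so that $h=f+C\hat f$ satisfies $h_x(0)=0$, whereas you write down the combination $g=\partial_xf_2(0)\,f_1-\partial_xf_1(0)\,f_2$ explicitly, but the substance is identical.
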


\begin{proof}
If a positive eigenvalue $\lambda$ corresponds to two linearly independent eigenfunctions $f,\hat{f}\in\mathbb{V}$,  then $h = f + C \hat{f}$ is also an non-trivial eigenfunction of $\lambda$ for any constant $C\in \R$. However, by Theorem \ref{thm:non-degeneracy}, $f_x(0)$ and $\hat{f}_x(0)$ are both nonzero since $f(0)=\hat f(0)=0$. We can thus find some constant $C$ such that $h_x(0) =  f_x(0) + C \hat{f}_x(0) = 0$, which leads to a contradiction.
\end{proof}

The next corollary upper bounds the distance between two neighboring zeros of an eigenfunction in terms of the eigenvalue. In particular, it characterizes how oscillatory an eigenfunction can be when the corresponding eigenvalue is small.

\begin{corollary}\label{cor:root_distance}
Let $(\lambda,f)$ be an eigen-pair of $\mtx{M}$ with $\lambda>0$. Then the distance between any two neighboring zeros of $f$ in $[0,1]$ must be smaller than or equal to $2\pi\lambda$. As a consequence, $f$ has at least $\lceil\frac{1}{2\pi\lambda}\rceil-1$ zeros in $(0,1)$, provided that $2\pi\lambda<1$.
\end{corollary}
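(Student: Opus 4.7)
The corollary has two assertions. The zero count follows from the distance bound by pigeonhole: since $f$ is odd and $f \in H^1_0([-1,1])$, both $0$ and $1$ are zeros of $f$ in $[0,1]$; listing the zeros as $0 = q_0 < q_1 < \cdots < q_k = 1$ with each consecutive gap at most $2\pi\lambda$ forces $1 = q_k - q_0 \leq 2\pi k \lambda$, giving $k \geq \lceil 1/(2\pi\lambda)\rceil$ and hence at least $\lceil 1/(2\pi\lambda)\rceil - 1$ interior zeros in $(0,1)$ when $2\pi\lambda < 1$.

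For the distance bound, I would take two consecutive zeros $r_1 < r_2$ of $f$ in $[0,1]$ with $f > 0$ on $(r_1, r_2)$ WLOG, and set $L = r_2 - r_1$. The plan is to integrate the Tricomi-type identity
\[
(-\Delta)^{1/2}(f^2)(x) = -\lambda\, f_x(x)^2 + \lambda^{-1}\bigl(f(x)^2 + c(f)^2\bigr), \quad x\in(-1,1),
\]
derived inside the proof of Theorem \ref{thm:non-degeneracy} (see \eqref{eqt:point_estimate_midstep}), over $(r_1, r_2)$. Since $(-\Delta)^{1/2} = \partial_x \mtx{H}$, the integrated LHS equals $\mtx{H}(f^2)(r_2) - \mtx{H}(f^2)(r_1)$, which by partial fractions equals $-\tfrac{L}{\pi}\int_{\R} f(y)^2/[(r_1-y)(r_2-y)]\,dy$. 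The integrand is negative on $y \in (r_1, r_2)$ and nonnegative on $y \notin (r_1, r_2)$, so one splits the integral accordingly. Using the midpoint inequality $(y-r_1)(r_2-y) \leq L^2/4$, the interior piece contributes at least $\tfrac{4}{\pi L}\int_{r_1}^{r_2} f^2\,dy$ to the LHS with the correct sign. Using $(r_1-y)(r_2-y) \geq (r_1-y)^2$ for $y<r_1$ (and symmetrically for $y>r_2$) together with Hardy's inequality (valid since $f(r_1)=f(r_2)=0$), the exterior piece is controlled in magnitude by $\tfrac{4L}{\pi}\int_{[-1,1]\setminus(r_1,r_2)} f_y^2\,dy$.

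Equating this with the integrated RHS $-\lambda\int_{r_1}^{r_2} f_x^2 + \lambda^{-1}\int_{r_1}^{r_2} f^2 + \lambda^{-1} c(f)^2 L$ and invoking Poincare's inequality $\int_{r_1}^{r_2} f^2 \leq (L/\pi)^2 \int_{r_1}^{r_2} f_x^2$ should, after rearrangement, force $L \leq 2\pi\lambda$. The main obstacle is recovering the sharp factor $2\pi$: the midpoint bound $(y-r_1)(r_2-y)\leq L^2/4$ is not optimal (the tight constant, attained on a sine test function, involves $\int_0^1 \sin^2(\pi t)/[t(1-t)]\,dt$), and balancing this against the Poincare constant $\pi/L$ to extract exactly $2\pi$ is delicate. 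A plausible cleaner route is a direct Sturm-type comparison with the test function $\varphi(x) = \sin(\pi(x-r_1)/L)$ paired with the same non-local identity, or an application of the min-max characterization of the eigenvalues of the self-adjoint compact operator $\mtx{M}$ to odd-extensions of such $\varphi$; the matching of $2\pi\lambda$ with the full period of this sine strongly suggests that the sharp constant comes from such a scaling argument.
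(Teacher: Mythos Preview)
Your pigeonhole count for the interior zeros is fine. The gap is in the distance bound: you integrate the identity \eqref{eqt:point_estimate_midstep} over $(r_1,r_2)$ and then try to recover the sharp constant $2\pi$ from a combination of the midpoint bound $(y-r_1)(r_2-y)\le L^2/4$, Hardy, and Poincar\'e. As you yourself note, this does not close: the exterior Hardy term $\tfrac{4L}{\pi}\int_{[-1,1]\setminus(r_1,r_2)}f_y^2$ has no a priori smallness and can swamp the gain from the interior piece, and even ignoring it the constants you obtain from the midpoint bound and Poincar\'e do not combine to $2\pi$. Your suggested fixes (Sturm comparison, min--max with a sine test function) are not carried out, so as written the proof is incomplete.

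The paper's argument avoids integration entirely and works pointwise with the full identity \eqref{eqt:point_estimate}, which contains the crucial second-derivative term $-2f\partial_x^2 f$ and the nonnegative Gagliardo integral. Dropping the latter and rearranging gives
\[
\partial_x\!\left(\frac{\partial_x f}{f}\right)\le -\frac{1}{2\lambda^2}-\frac{1}{2}\left(\frac{\partial_x f}{f}\right)^2
\quad\text{on }(r_1,r_2),
\]
a Riccati inequality for $g:=\lambda\,\partial_x f/f$ with $g(x_*)=0$ at the interior maximum $x_*$. Comparison with the explicit solution of $g'=-\tfrac{1}{2\lambda}(1+g^2)$ yields $|g(x)|\ge\tan(|x-x_*|/(2\lambda))$, and finiteness of $g$ on compact subsets of $(r_1,r_2)$ forces $\max\{x_*-r_1,\,r_2-x_*\}\le\pi\lambda$, hence $r_2-r_1\le 2\pi\lambda$. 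This is exactly the Sturm-type comparison you gesture at, but the key step you are missing is to use \eqref{eqt:point_estimate} \emph{pointwise} to extract a differential inequality for $\partial_x f/f$, rather than integrating \eqref{eqt:point_estimate_midstep} and losing the second-derivative information.
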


\begin{proof}
The $H^2$ regularity of $f$ and \eqref{eqt:non-degeneracy} implies that the zeros of $f$ in $[-1,1]$ are discrete. Let $r_1,r_2\in[0,1]$, $r_1<r_2$, be two neighboring zeros of $f$. Since $f\in H^2([-1,1])$, we may assume that $f>0$ on $(r_1,r_2)$, and that $f$ attains its local maximum at $x_*\in(r_1,r_2)$.

We can rewrite the equality \eqref{eqt:point_estimate} in Theorem \ref{thm:non-degeneracy} as
\[2\lambda^2\left((\partial_xf(x))^2 - f(x)\partial_x^2f(x)  \right) = \lambda^2(\partial_xf(x))^2 + f(x)^2 + c(f)^2 + \frac{\lambda}{\pi}\int_{\R}\frac{(f(x)-f(y))^2}{(x-y)^2}\idiff y,\]
which gives
\begin{equation}\label{eqt:log-concave}
\partial_x\left(\frac{\partial_xf(x)}{f(x)}\right) \leq -\frac{1}{2\lambda^2} - \frac{1}{2}\left(\frac{\partial_xf(x)}{f(x)}\right)^2,\quad x\in (r_1,r_2).
\end{equation}
Denote $g:= \lambda\partial_xf / f$. Then it satisfies 
\[\partial_xg \leq -\frac{1}{2\lambda}(1+g^2),\quad g(x_*) = 0.\]
We solve it to find 
\[\lambda\left|\frac{\partial_xf(x)}{f(x)}\right|\geq \tan\left(\frac{|x-x_*|}{2\lambda}\right),\quad x\in (r_1,r_2).\]
Since $f\in H^2([-1,1])$ and $f>0$ on $(r_1,r_2)$, the right-hand side of the inequality above should stay finite on any compact subset of $(r_1,r_2)$. This implies that 
\[\frac{1}{2\lambda}\max\{|r_1-x_*|,|r_2-x_*|\}\leq \frac{\pi}{2},\]
and thus $|r_2-r_1|\leq 2\pi\lambda$.
\end{proof}

From the inequality \eqref{eqt:log-concave} we also learn that $|f|$ is log-concave, i.e.\ $\ln|f|$ is concave, between any two neighboring zeros of $f$ in $[-1,1]$. As a result, there is only one local maximum or local minimum between any two neighboring zeros of an eigenfunction of $\mtx{M}$. \\

Finally, we show that, for any eigenfunction $f$ of $\mtx{M}$, the corresponding $c(f)$ is non-zero. That is, each eigenfunction $f$ can be rescaled (as in \eqref{eqt:scaling}) to become a solution of \eqref{eqt:main_equation} with $c_l=c_\om = -1$.\\

\begin{theorem}\label{thm:cf_nonzero} Let $(\lambda,f)$ be an eigen-pair of $\mtx{M}$ with $\lambda>0$. Let $c(f)$ be defined in \eqref{eqt:cf}. Then $c(f)\neq 0$. 
\end{theorem}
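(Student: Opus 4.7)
The plan is to extract a single integral identity from the eigenvalue equation $\lambda f = \mtx{M}(f)$ that rewrites $c(f)\,m$ (where $m := \int_{\R} xf(x)\,dx$) in terms of Sobolev quantities of $f$, and then to verify the right-hand side is strictly positive by combining two Cauchy--Schwarz inequalities on the Fourier side. The main obstacle is not deriving the identity but ensuring \emph{strict} (rather than merely non-strict) inequality in the Cauchy--Schwarz step.

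To derive the identity, I pair $\lambda f = \mtx{M}(f)$ with $f$ itself in $L^2(\R)$. Since $f \in \mathbb{V}$ is supported on $[-1,1]$, unfolding $\mtx{M}(f) = (-\Delta)^{-1/2}f - c(f)\,x$ on $[-1,1]$ gives
\[
\lambda\,\|f\|_{L^2}^2 \;=\; \int_{\R} f\,(-\Delta)^{-1/2}f\,dx \;-\; c(f)\,m.
\]
On the other hand, the self-adjointness calculation already carried out in the paper yields $\langle f, \mtx{M}(f)\rangle_{\dot H^1} = \|f\|_{\dot H^{1/2}(\R)}^2$, which in Fourier variables reads
\[
\lambda \;=\; \frac{\|f\|_{\dot H^{1/2}(\R)}^2}{\|f\|_{\dot H^1(\R)}^2} \;=\; \frac{\int |\xi|\,|\hat f(\xi)|^2\,d\xi}{\int |\xi|^2\,|\hat f(\xi)|^2\,d\xi}.
\]
Since Plancherel also identifies $\int_{\R} f\,(-\Delta)^{-1/2}f\,dx$ with $\int |\hat f|^2/|\xi|\,d\xi$ up to a Fourier constant that cancels with the corresponding constant in $\|f\|_{L^2}^2$, the claim $c(f)\neq 0$ becomes equivalent to the strict product inequality
\[
\Bigl(\int |\hat f|^2\,d\xi\Bigr)\Bigl(\int |\xi|\,|\hat f|^2\,d\xi\Bigr) \;<\; \Bigl(\int \frac{|\hat f|^2}{|\xi|}\,d\xi\Bigr)\Bigl(\int |\xi|^2\,|\hat f|^2\,d\xi\Bigr).
\]

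This inequality is simply the product of the two elementary Cauchy--Schwarz bounds $\bigl(\int|\hat f|^2\bigr)^2 \leq \bigl(\int|\hat f|^2/|\xi|\bigr)\bigl(\int|\xi||\hat f|^2\bigr)$ and $\bigl(\int|\xi||\hat f|^2\bigr)^2 \leq \bigl(\int|\hat f|^2\bigr)\bigl(\int|\xi|^2|\hat f|^2\bigr)$, after canceling the common factor. The decisive step is strictness: equality in either Cauchy--Schwarz would force $|\xi|$ to be constant $|\hat f|^2\,d\xi$-almost everywhere, so that $\hat f$ would be supported on some level set $\{|\xi|=c\}$ of Lebesgue measure zero; but an $L^2(\R)$ function concentrated on a Lebesgue-null set must vanish almost everywhere, forcing $f \equiv 0$. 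Hence both Cauchy--Schwarz steps are strict for any nontrivial $f \in \mathbb{V}$; we conclude $c(f)\,m > 0$, and in particular $c(f) \neq 0$ (with the incidental bonus that $m$ is also nonzero and of the same sign as $c(f)$).
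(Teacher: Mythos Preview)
Your argument is correct and takes a genuinely different route from the paper's proof. The paper argues by contradiction: assuming $c(f)=0$, it passes to the antiderivative $\phi(x)=\int_{-1}^x f$, observes that $\phi$ then satisfies the simpler relation $\lambda\,\partial_x\phi=-\chi_{[-1,1]}\mtx{H}(\phi)$, and applies the pointwise identity of Theorem~\ref{thm:non-degeneracy} at the zero $r=1$; since $\phi(1)=\partial_x\phi(1)=0$, this forces $\phi\equiv 0$, a contradiction. Your approach instead pairs the eigen-equation with $f$ in $L^2$, expresses $c(f)\,m$ as the combination $\|f\|_{\dot H^{-1/2}}^2-\lambda\|f\|_{L^2}^2$, and resolves the sign by a product of two strict Cauchy--Schwarz inequalities on the Fourier side (with strictness coming from the fact that $\hat f$, being entire by Paley--Wiener, cannot be supported on a Lebesgue-null set unless $f\equiv 0$).

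Each approach has its merits. The paper's proof reuses the Tricomi-based point estimate already developed, so within the paper it is economical; it also has a pleasant structural flavor, showing that the eigen-relation transfers to the antiderivative when $c(f)=0$. Your argument is more self-contained and elementary---it avoids Tricomi's identity entirely and needs only Plancherel and Cauchy--Schwarz---and it delivers the extra information that the first moment $m=\int xf$ is nonzero and has the same sign as $c(f)$. One small point worth making explicit in a write-up is why all four Fourier integrals are finite: $\hat f(0)=0$ by oddness and $\hat f$ is smooth by compact support, so $|\hat f(\xi)|^2/|\xi|$ is integrable near the origin, while $f\in H^1$ controls the large-$|\xi|$ tail.
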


\begin{proof} We prove the claim by contradiction. Suppose that $c(f) = 0$.
Define a new function $\phi$ as
\[\phi(x) = \int_{-1}^xf(y)\idiff y, \quad x\in [-1,1];\quad \phi(x) = 0,\quad x\in \R\backslash[-1,1].\]
Clearly, $\partial_x\phi(x) = f(x)$ and $\phi(x) = \phi(-x)$ for all $x\in \R$. It follows from \eqref{eqt:linear_map_derivative} that, for $x\in[-1,1]$,
\[\lambda\partial_xf(x) = -\mtx{H}(f)(x) = -\partial_x\mtx{H}(\phi)(x).\]
Hence, there is some constant $c_1$ such that $-\lambda f = \mtx{H}(\phi) + c_1$ on $[-1,1]$. But since $f$ and $\mtx{H}(\phi)$ are both odd functions of $x$, we have $c_1=0$. We thus obtain a formula for $\phi$ that is similar to \eqref{eqt:linear_map_derivative}:
\[\lambda\partial_x\phi = -\chi_{[-1,1]}\mtx{H}(\phi).\]
Recall how we derive \eqref{eqt:non-degeneracy} from \eqref{eqt:linear_map_derivative}. Using the same techniques as in the proof of Theorem \ref{thm:non-degeneracy}, we can similarly show that, for any $r\in[-1,1]$ such that $\phi(r)=0$,  
\[(\partial_x\phi(r))^2 = \frac{1}{\pi\lambda}\int_{-1}^1 \frac{\phi(x)^2}{(r-x)^2}\idiff x.\]
Note that $\phi(1) = \int_{-1}^1f(y)\idiff y = 0$ since $f$ is odd, and $\partial_x\phi(1) = f(1) = 0$. This implies that 
\[\frac{1}{\pi\lambda}\int_{-1}^1 \frac{\phi(x)^2}{(1-x)^2}\idiff x = 0,\]
which is impossible unless $\phi(x)\equiv 0$ on $[-1,1]$. But this contradicts the fact that $f$ is non-trivial on $[-1,1]$. Therefore, we must have $c(f)\neq0$.
\end{proof}

In the next two subsections, we will prove the claimed sign property of the solutions in the basic class \eqref{basic_claiss_formal}. More precisely, we will show that the leading eigenfunction of $\mtx{M}$ corresponding to the largest eigenvalue is the only eigenfunction that is strictly positive on $(0,1)$ (up to a multiplicative constant), while all the other eigenfunctions are sign-changing on $(0,1)$. Guided by Corollary \ref{cor:root_distance}, we need to estimate the eigenvalues of $\mtx{M}$.

\subsection{Comparison with a classical eigenvalue problem}
One may easily relate the variational problem \eqref{eqt:variational_form} to a more classic and local one: 
\begin{equation}\label{eqt:variational_form_local}
\sup_{f\in \mathbb{V}\backslash\{0\}} \frac{\la f, (-\widetilde\Delta)^{-1/2}f \ra_{\dot{H}^1}}{\|f\|_{\dot{H}^1([-1,1])}^2} = \sup_{f\in \mathbb{V}\backslash\{0\}} \frac{\|f\|_{\dot{H}^{1/2}([-1,1])}^2}{\|f\|_{\dot{H}^1([-1,1])}^2}.
\end{equation}
Here $(-\widetilde\Delta)^{-1}$ denotes the inverse Laplacian on the bounded interval $[-1,1]$ associated with zero Dirichlet boundary conditions, and $(-\widetilde\Delta)^{-1/2}$ is defined as the square root of it. One should see $f$ in the expression $(-\widetilde\Delta)^{-1/2}f$ as a function in $H^1_0([-1,1])$. It is well-known that all eigen-pairs of $(-\widetilde\Delta)^{-1/2}$ corresponding to this variational problem are given by 
\begin{equation}\label{eqt:classic_eigenpair}
\tilde \lambda_n = \frac{1}{n\pi},\quad \tilde f_n = \chi_{[-1,1]}\frac{\sin(n\pi x)}{n\pi},\quad n \in \Z_+. 
\end{equation}
Here each $\tilde f_n$ is normalized so that $\|\tilde f_n\|_{\dot{H}^1([-1,1])} = 1$. By the similarity between these two variational problems, it is conceivable that the eigen-pairs of the eigenvalue problem \eqref{eqt:eigen_problem} are similar to those in \eqref{eqt:classic_eigenpair}. In fact, we have numerically computed the first few eigenfunctions of $\mtx{M}$ and plot them in Figure \ref{fig:Eigenfunctions}. One can see that they are quite similar to the corresponding eigenfunctions of $(-\widetilde\Delta)^{-1/2}$. In particular, it is observed that $f_n$, the eigenfunction of $\mtx{M}$ corresponding to its $n$-th largest eigenvalue, has the same number of zeros on $[0,1]$ as $\tilde{f}_n$ with similar locations of the zeros. Besides, both $|f_n|$ and $|\tilde{f}_n|$ are log-concave between their neighboring zeros (for $|f_n|$, this is proved in \eqref{eqt:log-concave}). This explains the similarity in the shapes of $f_n$ and $\tilde{f}_n$.

As for the eigenvalues, we can show that $\lambda_n$ is comparable to $\tilde\lambda_n$ for all $n$. To achieve this, we first prove that the two semi-norms $\|\cdot \|_{\dot{H}^{1/2}(\R)}$ and $\|\cdot\|_{\dot{H}^{1/2}([-1,1])}$ are equivalent over the space $\mathbb{V}$.

\begin{theorem}\label{thm:norm_equivalence}
For any $f\in \mathbb{V}$, it holds that 
\begin{equation}\label{eqt:norm_equivalence}
\frac{\sqrt{2}}{\pi}\, \|f\|_{\dot{H}^{1/2}([-1,1])}\leq \|f\|_{\dot{H}^{1/2}(\R)} \leq \|f\|_{\dot{H}^{1/2}([-1,1])}.
\end{equation}
\end{theorem}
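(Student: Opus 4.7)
The plan is to establish the two inequalities of \eqref{eqt:norm_equivalence} separately.

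For the upper bound $\|f\|_{\dot H^{1/2}(\mathbb R)}\le\|f\|_{\dot H^{1/2}([-1,1])}$, I would invoke the Caffarelli--Silvestre harmonic extension characterization. Both seminorms admit variational descriptions as minimum Dirichlet energies: $\|f\|_{\dot H^{1/2}(\mathbb R)}^2$ is the energy of the Poisson extension $U$ of $f$ to the half-plane $\mathbb R\times(0,\infty)$, while $\|f\|_{\dot H^{1/2}([-1,1])}^2$ is the energy of the harmonic extension $V$ on the strip $(-1,1)\times(0,\infty)$ subject to the additional lateral Dirichlet condition $V(\pm1,y)=0$. Given the strip minimizer $V$, one zero-extends it to all of $\mathbb R\times(0,\infty)$; continuity along $x=\pm1$ is ensured by these lateral conditions (together with $f(\pm 1)=0$), and the extended function is thus an admissible competitor for the half-plane problem with the same Dirichlet energy. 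The minimality of the Poisson extension then yields the inequality.

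For the lower bound, my plan is a Fourier-side computation. Expanding $f=\sum_{n\ge1}a_n\sin(n\pi x)$ on $[-1,1]$ gives $\|f\|_{\dot H^{1/2}([-1,1])}^2=\pi\sum_n n\,a_n^2$; a direct calculation yields the Fourier transform of the zero extension as
\begin{equation*}
\hat f(\xi)=-2i\sin\xi\sum_{n\ge1}\frac{a_n(-1)^{n+1}n\pi}{(n\pi)^2-\xi^2},
\end{equation*}
which converts $\|f\|_{\dot H^{1/2}(\mathbb R)}^2$ via Plancherel into an explicit quadratic form in $(a_n)$. Writing $s(\xi)=i\hat f(\xi)$, which is odd, real, band-limited in $[-1,1]$ and satisfies the sampling identity $s(n\pi)=a_n$, the desired inequality reduces to the weighted Plancherel--P\'olya-type estimate
\begin{equation*}
\int_0^\infty \xi\,s(\xi)^2\,d\xi\ge 2\sum_{n\ge1}n\,s(n\pi)^2
\end{equation*}
for every odd $s$ in the Paley--Wiener class $PW_1$; the unweighted companion $\int_{\mathbb R}s^2\,d\xi=\pi\sum_n s(n\pi)^2$ is the classical Shannon sampling identity.

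The main obstacle is verifying this weighted sampling inequality sharply. Since the weight $|\xi|$ is not band-limited, the orthogonality of sinc interpolants is disrupted and a single-line proof is unlikely. Two natural routes are: (i) substitute the sinc expansion $s(\xi)=\sum_n s(n\pi)\,\mathrm{sinc}((\xi-n\pi)/\pi)$ into the integral, compute the resulting kernels $\int_0^\infty \xi\,\mathrm{sinc}((\xi-m\pi)/\pi)\mathrm{sinc}((\xi-n\pi)/\pi)\,d\xi$ in closed form via residue calculus (they reduce to sine-integral values), and then establish the positive semidefiniteness of the associated matrix after diagonal extraction via a symmetrization argument; or (ii) partition $(0,\infty)$ into intervals $I_n=[(n-\tfrac12)\pi,(n+\tfrac12)\pi]$, apply a Bernstein/reproducing-kernel pointwise bound of the form $s(n\pi)^2\lesssim \int_{I_n}s^2$ valid for $s\in PW_1$, and combine with the trivial weight estimate $|\xi|\ge(n-\tfrac12)\pi$ on $I_n$. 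Matching the specific constant $\sqrt{2}/\pi$ is most delicate; it likely requires an optimized version of route (ii), since the quadrature comparison on $I_n$ naturally produces a factor $n/(n-\tfrac12)\to 1$ that leaves a clean dimensional constant from the endpoint $n=1$.
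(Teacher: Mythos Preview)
Your upper bound argument via harmonic extension is correct and is a genuinely different route from the paper's. The paper instead argues algebraically: from Lemma~\ref{lem:compactness} one has the operator inequality $\langle f,\mtx{M}^2 f\rangle_{\dot H^1}\le \|f\|_{L^2}^2=\langle f,(-\widetilde\Delta)^{-1}f\rangle_{\dot H^1}$, and then invokes operator monotonicity of the square root to pass from $\mtx{M}^2\le(-\widetilde\Delta)^{-1}$ to $\mtx{M}\le(-\widetilde\Delta)^{-1/2}$. Your extension-and-zero-competitor argument is more geometric and self-contained (it does not require the Loewner--Heinz theorem), while the paper's argument ties the inequality directly to the operator framework already in place.

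Your lower bound, however, has a genuine gap. You correctly reduce the inequality to a weighted sampling estimate for odd Paley--Wiener functions, but you do not prove it, and neither of your proposed routes is on solid ground. Route~(i) has a convergence problem: the individual kernels $\int_0^\infty \xi\,\mathrm{sinc}((\xi-m\pi)/\pi)\,\mathrm{sinc}((\xi-n\pi)/\pi)\,d\xi$ diverge logarithmically (the integrand behaves like $\sin^2\xi/\xi$ at infinity), so one cannot simply expand termwise and sum. Route~(ii), a localized Bernstein bound on intervals $I_n$, will produce \emph{some} constant, but there is no mechanism in what you wrote that pins it down to $\sqrt 2/\pi$; your remark that it ``likely requires an optimized version'' is precisely where the proof is missing.

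The paper's approach to the lower bound is entirely different and far more elementary. It writes both seminorms as Gagliardo double integrals---the interval seminorm with the periodic kernel $\big(\tfrac{4}{\pi^2}\sin^2(\tfrac\pi2(x-y))\big)^{-1}$, the real-line seminorm with $(x-y)^{-2}$---then uses the odd symmetry of $f$ to split the interval integral into pieces and compares kernels via the elementary bound $|t/\sin t|\le \pi/2$ on $[-\pi/2,\pi/2]$. A short case analysis (on whether $x+y\le 1$ or $x+y>1$) together with a change of variables yields the constant $\sqrt 2/\pi$ in a few lines. If you want to complete your argument, I would abandon the Fourier route for the lower bound and adopt this kernel comparison instead.
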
 

\begin{proof}
We first prove the second inequality in \eqref{eqt:norm_equivalence}. By Lemma \ref{lem:compactness} and the definition of $(-\widetilde\Delta)^{-1}$, we have that 
\[\la f, \mtx{M}^2(f) \ra_{\dot{H}^1}\leq \|f\|_{L^2([-1,1])}^2 = \la f, (-\widetilde\Delta)^{-1}f \ra_{\dot{H}^1}.\]
Since the square-root function is operator-monotone increasing over positive semidefinite, self-adjoint and compact operators (which is a consequence of the L\"owner--Heinz Theorem; e.g., see \cite[Theorem 2.6]{carlen2010trace}), it immediately follows that
\[\|f\|_{\dot{H}^{1/2}(\R)}^2 = \la f, \mtx{M}(f) \ra_{\dot{H}^1}\leq \la f, (-\widetilde\Delta)^{-1/2}f \ra_{\dot{H}^1} = \|f\|_{\dot{H}^{1/2}([-1,1])}^2.\]

Next, we prove the first inequality in \eqref{eqt:norm_equivalence} using the double-integral expressions of the two semi-norms. Owing to the odd symmetry of $f\in \mathbb{V}$, we have
\begin{align*}
\|f\|_{\dot{H}^{1/2}([-1,1])}^2 &= \frac{1}{2\pi}\int_{-1}^1\int_{-1}^1 \frac{(f(x)-f(y))^2}{\frac{4}{\pi^2}\sin^2(\frac{\pi}{2}(x-y))}\idiff x\idiff y\\
&= \frac{\pi}{4}\int_{0}^1\int_{0}^1 \frac{(f(x)-f(y))^2}{\sin^2(\frac{\pi}{2}(x-y))}\idiff x\idiff y + \frac{\pi}{4}\int_{0}^1\int_{0}^1 \frac{(f(x)+f(y))^2}{\sin^2(\frac{\pi}{2}(x+y))}\idiff x\idiff y\\
&=: I_1 + I_2.
\end{align*}
Since $|t/\sin (t)|\leq \pi/2$ for $t\in[-\pi/2,\pi/2]$, the first term above can be controlled as
\[I_1 \leq \frac{\pi}{4}\int_{0}^1\int_{0}^1 \frac{(f(x)-f(y))^2}{(x-y)^2}\idiff x\idiff y.\]
To estimate the second term $I_2$, we further decompose the integral as
\begin{align*}
I_2 &= \frac{\pi}{4}\int_{0}^1\int_{0}^1 \chi_{\{|x+y|\leq1\}}\frac{(f(x)+f(y))^2}{\sin^2(\frac{\pi}{2}(x+y))}\idiff x\idiff y + \frac{\pi}{4}\int_{0}^1\int_{0}^1 \chi_{\{|x+y|>1\}}\frac{(f(x)+f(y))^2}{\sin^2(\frac{\pi}{2}(x+y))}\idiff x\idiff y\\
&=: I_{2,1} + I_{2,2}.
\end{align*}
We bound the denominator in $I_{2,1}$ in a similar way to get 
\[I_{2,1}\leq \frac{\pi}{4}\int_{0}^1\int_{0}^1 \chi_{\{|x+y|\leq1\}}\frac{(f(x)+f(y))^2}{(x+y)^2}\idiff x\idiff y \leq \frac{\pi}{4}\int_{0}^1\int_{0}^1 \frac{(f(x)+f(y))^2}{(x+y)^2}\idiff x\idiff y.\]
The term $I_{2,2}$ is handled similar but with change of variables in $x$ and $y$:
\begin{align*}
I_{2,2} &= \frac{\pi}{4}\int_{0}^1\int_{0}^1 \chi_{\{|x+y|>1\}}\frac{(f(x)+f(y))^2}{\sin^2(\frac{\pi}{2}(2 - x - y))}\idiff x\idiff y\\
&\leq \frac{\pi}{4}\int_{0}^1\int_{0}^1 \chi_{\{|x+y|>1\}}\frac{(f(x)+f(y))^2}{(2 - x - y)^2}\idiff x\idiff y\\
&\leq \frac{\pi}{2}\int_{0}^1\int_{0}^1 \frac{f(x)^2+f(y)^2}{(2 - x - y)^2}\idiff x\idiff y\\
&= \frac{\pi}{2}\int_{0}^1\left(\int_{1}^2 \frac{f(x)^2}{(x-y)^2}\idiff y\right)\idiff x + \frac{\pi}{2}\int_{0}^1\left(\int_{1}^2 \frac{f(y)^2}{(x-y)^2}\idiff x\right)\idiff y\\
&= \frac{\pi}{2}\int_{0}^1\left(\int_{1}^2 \frac{(f(x)-f(y))^2}{(x-y)^2}\idiff y\right)\idiff x + \frac{\pi}{2}\int_{0}^1\left(\int_{1}^2 \frac{(f(x)-f(y))^2}{(x-y)^2}\idiff x\right)\idiff y.
\end{align*}
Combining the preceding estimates, we obtain that
\begin{align*}
\|f\|_{\dot{H}^{1/2}([-1,1])}^2 &\leq \frac{\pi}{2}\int_{0}^{+\infty}\int_{0}^{+\infty}\left(\frac{(f(x)-f(y))^2}{(x-y)^2} + \frac{(f(x)+f(y))^2}{(x+y)^2} \right) \idiff x \idiff y\\
&= \frac{\pi}{4}\int_{\R^2}\frac{(f(x)-f(y))^2}{(x-y)^2} \idiff x \idiff y\\
&= \frac{\pi^2}{2}\|f\|_{\dot{H}^{1/2}(\R)}^2,
\end{align*}
which is the desired inequality.
\end{proof}

We remark that the factor $\sqrt{2}/\pi$ can be slightly improved by choosing a finer partition of $I_2$ in the proof above, instead of using $\chi_{\{|x+y|\leq1\}}$. As an immediate consequence of Theorem \ref{thm:norm_equivalence}, the sorted eigenvalues of $\mtx{M}$ and $(-\widetilde\Delta)^{-1/2}$ are pairwise comparable.

\begin{corollary}\label{cor:eigenvalue_comparison}
For each $n\geq 1$, 
\begin{equation}\label{eqt:eigenvalue_comparison}
\frac{2}{\pi^2}\tilde\lambda_n \leq \lambda_n < \tilde\lambda_n,
\end{equation}
where $\lambda_n$ and $\tilde\lambda_n=\frac{1}{n\pi}$ are the $n$-th largest eigenvalues of $\mtx{M}$ and $(-\widetilde\Delta)^{-1/2}$, respectively. 
\end{corollary}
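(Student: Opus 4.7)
The plan is to realize both $\lambda_n$ and $\tilde\lambda_n$ as the $n$-th largest eigenvalues of self-adjoint, compact, positive operators on the Hilbert space $(\mathbb{V},\la\cdot\,,\cdot\ra_{\dot{H}^1})$---namely $\mtx{M}$ itself and the restriction of $(-\widetilde\Delta)^{-1/2}$ to $\mathbb{V}$---and then invoke the Courant--Fischer min-max principle. Since any $f\in\mathbb{V}$ is supported in $[-1,1]$, one has $\|f\|_{\dot{H}^1(\R)}=\|f\|_{\dot{H}^1([-1,1])}$, so the two Rayleigh quotients
\[
\frac{\la f,\mtx{M} f\ra_{\dot{H}^1}}{\|f\|_{\dot{H}^1}^2}=\frac{\|f\|_{\dot{H}^{1/2}(\R)}^2}{\|f\|_{\dot{H}^1}^2},\qquad \frac{\la f,(-\widetilde\Delta)^{-1/2} f\ra_{\dot{H}^1}}{\|f\|_{\dot{H}^1}^2}=\frac{\|f\|_{\dot{H}^{1/2}([-1,1])}^2}{\|f\|_{\dot{H}^1}^2}
\]
share the same denominator. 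The two-sided pointwise bound \eqref{eqt:norm_equivalence} from Theorem \ref{thm:norm_equivalence} then translates immediately, via the min-max formulas, into $(2/\pi^2)\tilde\lambda_n\le\lambda_n\le\tilde\lambda_n$.

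The delicate step is turning the upper bound into a strict inequality. To do this I would first show that the nonnegative operator $(-\widetilde\Delta)^{-1/2}-\mtx{M}$ has trivial kernel on $\mathbb{V}$. Suppose $\mtx{M} f=(-\widetilde\Delta)^{-1/2} f$ for some $f\in\mathbb{V}$. Then by self-adjointness of both operators,
\[
\la f,\mtx{M}^{2} f\ra_{\dot{H}^1}=\|\mtx{M} f\|_{\dot{H}^1}^{2}=\|(-\widetilde\Delta)^{-1/2} f\|_{\dot{H}^1}^{2}=\la f,(-\widetilde\Delta)^{-1} f\ra_{\dot{H}^1}.
\]
However, the computation inside the proof of Lemma \ref{lem:compactness} gives $\|\mtx{M} f\|_{\dot{H}^1}^{2}=\|\mtx{H}(f)\|_{L^2([-1,1])}^{2}-2c(f)^{2}$, whereas $\la f,(-\widetilde\Delta)^{-1} f\ra_{\dot{H}^1}=\|f\|_{L^2}^{2}=\|\mtx{H}(f)\|_{L^2(\R)}^{2}$ by the $L^2$-isometry of $\mtx{H}$. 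Equality therefore forces both $c(f)=0$ and $\mtx{H}(f)\equiv 0$ on $\R\setminus[-1,1]$. I would then invoke Sokhotski--Plemelj: the Cauchy integral $\Phi(z)=\frac{1}{\pi}\int_{-1}^{1}\frac{f(y)}{z-y}\idiff y$ is analytic in the connected domain $\CC\setminus[-1,1]$ and agrees with $\mtx{H}(f)$ on $\R\setminus[-1,1]$; vanishing there propagates, by analytic continuation, to $\Phi\equiv 0$ throughout $\CC\setminus[-1,1]$, and the jump relation $\Phi^{+}(x)-\Phi^{-}(x)=-2i f(x)$ on $(-1,1)$ then forces $f\equiv 0$.

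With $(-\widetilde\Delta)^{-1/2}-\mtx{M}$ now strictly positive definite on $\mathbb{V}$, the proof is finished by a finite-dimensional min-max. Taking $V_n=E_n$, the $n$-dimensional top eigenspace of $\mtx{M}$, every nonzero $f\in E_n$ satisfies $\la f,(-\widetilde\Delta)^{-1/2} f\ra_{\dot{H}^1}/\|f\|_{\dot{H}^1}^{2}>\la f,\mtx{M} f\ra_{\dot{H}^1}/\|f\|_{\dot{H}^1}^{2}\ge\lambda_n$; since $E_n$ is finite-dimensional, the continuous strictly positive quantity $\la f,((-\widetilde\Delta)^{-1/2}-\mtx{M})f\ra_{\dot{H}^1}/\|f\|_{\dot{H}^1}^{2}$ attains a strictly positive minimum on the unit sphere of $E_n$, yielding $\tilde\lambda_n\ge\min_{0\neq f\in E_n}\la f,(-\widetilde\Delta)^{-1/2} f\ra_{\dot{H}^1}/\|f\|_{\dot{H}^1}^{2}>\lambda_n$. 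The hardest part of the argument is exactly this strict upgrade: operator monotonicity of the square root does not in general preserve strict inequalities, so one cannot deduce $\mtx{M}\prec(-\widetilde\Delta)^{-1/2}$ directly from the easier square comparison $\mtx{M}^{2}\prec(-\widetilde\Delta)^{-1}$, and the self-adjointness identity $\la f,\mtx{M}^{2} f\ra=\|\mtx{M} f\|^{2}$ applied to the hypothetical equality case is what bridges the gap.
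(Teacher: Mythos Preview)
Your argument is correct. The non-strict bounds follow exactly as in the paper, via Theorem~\ref{thm:norm_equivalence} and Courant--Fischer; the interesting comparison is in how strictness is obtained.

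The paper argues by contradiction at the level of the specific eigenfunction $\tilde f_n=\chi_{[-1,1]}\sin(n\pi x)/(n\pi)$: assuming $\lambda_n=\tilde\lambda_n$, it first shows $\tilde f_n$ must lie in the span of the first $n$ eigenfunctions of $\mtx{M}$, then, using the same identity $\|\mtx{M}(\cdot)\|_{\dot H^1}^2=\|\mtx{H}(\cdot)\|_{L^2([-1,1])}^2-2c(\cdot)^2$ you invoke, deduces $\mtx{H}(\tilde f_n)\equiv 0$ on $\R\setminus[-1,1]$ and refutes this by an explicit computation of $\mtx{H}(\tilde f_n)$ for $x>1$. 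You instead work at the operator level: you show that the nonnegative self-adjoint operator $(-\widetilde\Delta)^{-1/2}-\mtx{M}$ has trivial kernel by reducing any kernel element to the same condition $\mtx{H}(f)\equiv 0$ off $[-1,1]$, and you dispatch that condition by analytic continuation of the Cauchy integral and the Plemelj jump relation rather than by an explicit sine calculation. Strictness then follows from a clean compactness argument on the finite-dimensional top eigenspace $E_n$ of $\mtx{M}$. Your route is more structural and avoids the explicit evaluation of $\mtx{H}(\tilde f_n)$; the paper's route is more hands-on but entirely elementary. Both hinge on exactly the same two ingredients: the identity from Lemma~\ref{lem:compactness} and the fact that a compactly supported $L^2$ function whose Hilbert transform vanishes off its support must vanish. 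One small point you leave implicit is that, for a nonnegative self-adjoint operator, triviality of the kernel is equivalent to strict positivity of the quadratic form; this is standard and your subsequent compactness argument on the unit sphere of $E_n$ uses only the latter.
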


\begin{proof}
Recall the Courant--Fischer--Weyl max-min principle for a general self-adjoint compact operator $\mtx{A}$ acting on $\mathbb{V}$:
\[\lambda_n(\mtx{A}) = \sup_{\mathbb{U}\subset\mathbb{V}, \dim(\mathbb{U}) = n}\,\inf_{g\in \mathbb{U}\backslash\{0\}}\frac{\la g, \mtx{A}(g)\ra_{\dot{H}^1}}{\|g\|_{\dot{H}^1}^2},\]
where $\lambda_n(\mtx{A})$ denotes the $n$-th largest eigenvalue of $\mtx{A}$. This and Theorem \ref{thm:norm_equivalence} together imply that 
\[\frac{2}{\pi^2}\tilde\lambda_n \leq \lambda_n \leq \tilde\lambda_n.\]
We need to show that the second inequality above is strict for all $n$.

For each $n\geq 1$, let $f_n$ be the unique eigenfunction of $\mtx{M}$ corresponding to $\lambda_n$ with $\|f_n\|_{\dot{H}^1} = 1$, and let $\tilde{f}_n$ be defined as in \eqref{eqt:classic_eigenpair}. Let $\mathbb{V}_n = \mathrm{span}\{f_1,f_2,\dots,f_n\}$. Suppose that $\lambda_n=\tilde{\lambda}_n$. By Theorem \ref{thm:norm_equivalence}, for any $g = \sum_{i=1}^nc_if_i\in \mathbb{V}$ with $\|g\|_{\dot{H}^1}^2 = \sum_{i=1}^nc_i^2 = 1$,
\[\|g\|_{\dot{H}^{1/2}([-1,1])}^2 \geq \|g\|_{\dot{H}^{1/2}(\R)}^2 = \suml_{i=1}^nc_i^2\lambda_i \geq \lambda_n = \tilde{\lambda}_n.\]
Therefore, by the Courant--Fischer--Weyl max-min principle,
\[\tilde{\lambda}_n \leq \inf_{g\in \mathbb{V}_n\backslash\{0\}}\frac{\|g\|_{\dot{H}^{1/2}([-1,1])}^2}{\|g\|_{\dot{H}^1}^2}\leq  \sup_{\mathbb{U}\subset\mathbb{V}, \dim(\mathbb{U}) = n}\,\inf_{g\in \mathbb{U}\backslash\{0\}}\frac{\|g\|_{\dot{H}^{1/2}([-1,1])}^2}{\|g\|_{\dot{H}^1}^2}= \tilde{\lambda}_n.\]
That is, 
\[\tilde{\lambda}_n = \inf_{g\in \mathbb{V}_n\backslash\{0\}}\frac{\|g\|_{\dot{H}^{1/2}([-1,1])}^2}{\|g\|_{\dot{H}^1}^2},\]
which implies that $\tilde{f}_n\in \mathbb{V}_n$. Then, from the proof of Lemma \ref{lem:compactness}, we find that 
\[\tilde{\lambda}_n^2 = \lambda_n^2 \leq \|\mtx{M}(\tilde{f}_n)\|_{\dot{H}^1}^2 = \|\mtx{H}(\tilde{f}_n)\|_{L^2([-1,1])}^2 - 2c(\tilde{f}_n)^2\leq  \|\mtx{H}(\tilde{f}_n)\|_{L^2(\R)}^2 = \|\tilde{f}_n\|_{L^2}^2 = \tilde{\lambda}_n^2.\]
This means the inequalities above must all be equality. In particular, $\|\mtx{H}(\tilde{f}_n)\|_{L^2([-1,1])}^2 = \|\mtx{H}(\tilde{f}_n)\|_{L^2(\R)}^2$. However, it is straightforward to check that
\[\|\mtx{H}(\tilde{f}_n)\|_{L^2(\R)}^2 - \|\mtx{H}(\tilde{f}_n)\|_{L^2([-1,1])}^2 = \|\mtx{H}(\tilde{f}_n)\|_{L^2(\R\backslash[-1,1])}^2>0,\]
which leads to a contradiction. In fact, for any $x>1$, 
\[(-1)^n\,\mtx{H}(\tilde{f}_n)(x) = \frac{(-1)^n}{n\pi^2}\int_{-1}^1\frac{\sin(n\pi y)}{x-y}\idiff y =  \frac{-1}{n^3\pi^2}\suml_{k=1}^n\int_0^1\frac{\sin(\pi s)\idiff s}{(x-\frac{s+2k-n-2}{n})(x-\frac{s+2k-n-1}{n})} <0.\]
Therefore, we must have $\lambda_n<\tilde{\lambda}_n$.
\end{proof}

We are now able to prove the sign-changing property for all eigenfunctions of $\mtx{M}$ except for the leading one, using the second inequality in \eqref{eqt:eigenvalue_comparison}.

\begin{corollary}\label{cor:sign-changing}
Let $f_n$ be a non-trivial eigenfunction of $\mtx{M}$ corresponding to the $n$-th largest eigenvalue $\lambda_n$. Then $f_n$ has at least $\lfloor\frac{n}{2}\rfloor$ zeros in $(0,1)$. As a result, $f_n$ must be sign-changing on $(0,1)$ for all $n\geq 2$.
\end{corollary}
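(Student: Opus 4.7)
The plan is to combine two facts already in the excerpt: the distance bound between neighboring zeros of an eigenfunction from Corollary \ref{cor:root_distance}, and the strict eigenvalue comparison $\lambda_n < \tilde{\lambda}_n = 1/(n\pi)$ from Corollary \ref{cor:eigenvalue_comparison}. Together these allow a straightforward pigeonhole count on the interval $[0,1]$, once we notice that $f_n \in \mathbb{V}$ automatically vanishes at the two endpoints: $f_n(0)=0$ by oddness and $f_n(1)=0$ by the Dirichlet condition built into $\mathbb{V}$.

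First I would record that the strict bound $\lambda_n < 1/(n\pi)$ yields $2\pi\lambda_n < 2/n$. For $n\geq 2$ this is at most $1$, so Corollary \ref{cor:root_distance} applies and says that any two neighboring zeros of $f_n$ in $[0,1]$ lie within distance strictly less than $2/n$.

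Next comes the counting step. Suppose $f_n$ has exactly $k$ zeros in the open interval $(0,1)$. Including the endpoint zeros at $0$ and $1$, there are $k+2$ zeros in $[0,1]$, dividing it into $k+1$ consecutive subintervals of total length $1$. Since each such subinterval has length strictly less than $2/n$, we obtain
\begin{equation}
1 \;<\; (k+1)\cdot \frac{2}{n}, \qquad\text{i.e.,}\qquad k \;>\; \frac{n}{2}-1.
\end{equation}
Because $k$ is a non-negative integer, a short case check (even versus odd $n$) gives $k \geq \lfloor n/2\rfloor$, which is the desired lower bound. For $n\geq 2$ this is at least $1$, so $f_n$ has at least one interior zero; and Theorem \ref{thm:non-degeneracy} together with $c(f_n)\neq 0$ (Theorem \ref{thm:cf_nonzero}) shows that every zero of $f_n$ is simple, so each interior zero is a genuine sign change. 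Hence $f_n$ is sign-changing on $(0,1)$ for all $n\geq 2$.

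I do not expect a genuine obstacle here; the argument is essentially a bookkeeping consequence of the tools already established. The only subtlety worth flagging is that the \emph{strict} inequality $\lambda_n<\tilde\lambda_n$ proved in Corollary \ref{cor:eigenvalue_comparison} is exactly what is needed: a non-strict bound would only deliver $k\geq \lfloor n/2\rfloor - 1$ in the even case and fail to produce the sign change for $n=2$. This is why the non-trivial content of Corollary \ref{cor:eigenvalue_comparison}, namely ruling out equality with the Dirichlet eigenvalues via the $L^2$-tail of $\mtx{H}(\tilde f_n)$ outside $[-1,1]$, is load-bearing in the present corollary as well.
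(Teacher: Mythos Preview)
Your proof is correct and follows essentially the same approach as the paper's: combine the strict bound $\lambda_n < 1/(n\pi)$ from Corollary~\ref{cor:eigenvalue_comparison} with the gap bound of Corollary~\ref{cor:root_distance}, then pigeonhole on $[0,1]$. The only differences are cosmetic: the paper invokes the second clause of Corollary~\ref{cor:root_distance} directly (i.e.\ the pre-packaged bound $\lceil 1/(2\pi\lambda_n)\rceil-1 \geq \lfloor n/2\rfloor$) rather than redoing the subinterval count, and for simplicity of the interior zeros it cites only Theorem~\ref{thm:non-degeneracy} --- the term $\tfrac{\lambda}{4\pi}\|f\|_{\dot H^1}^2$ in \eqref{eqt:non-degeneracy} already forces $\partial_x f_n(r)\neq 0$, so your additional appeal to Theorem~\ref{thm:cf_nonzero} is unnecessary, though harmless.
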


\begin{proof}
It follows from Corollary \ref{cor:eigenvalue_comparison} that $\frac{1}{2\pi\lambda_n}>\frac{n}{2}$. Corollary \ref{cor:root_distance} then implies that the number of zeros of $f_n$ in $(0,1)$ can be bounded from below by $\lceil\frac{1}{2\pi\lambda_n}\rceil - 1 \geq \lfloor\frac{n}{2}\rfloor$. Therefore, for each $n\geq 2$, $f_n$ has at least one zero in $(0,1)$. Since all zeros of $f_n$ in $(-1,1)$ are simple (see Theorem \ref{thm:non-degeneracy}), $f_n$ must be sign-changing on $(0,1)$.
\end{proof}

We remark that Corollary \ref{cor:sign-changing} only gives a crude lower bound on the numbers of zeros of $f_n$ that has the correct order (namely $O(n)$). Based on the comparison of the variational problems of $\mtx{M}$ and $(-\widetilde{\Delta})^{-1/2}$, and also based on our numerical observations (see Appendix \ref{sec:numerical}), it is reasonable to conjecture that the eigenfunction of $\mtx{M}$ corresponding to its $n$-th largest eigenvalue actually has exactly $n-1$ zeros in $(0,1)$ (as many as the zeros of $sin(n\pi x)$ in $(0,1)$). Proving this conjecture may require the establishment of a variant of the Sturm--Picone comparison theorem adapted to the operator $\mtx{M}$. However, we have not been able to achieve this with our current approach.

\subsection*{An ``illegal'' eigenfunction with eigenvalue $0$} It is well-known that $0$ does not belong to the point spectrum of $(-\widetilde\Delta)^{-1/2}\in \mathcal{L}(\mathbb{V})$. Hence, by the first inequality in \eqref{eqt:norm_equivalence}, $0$ is not an eigenvalue of $\mtx{M}\in\mathcal{L}(\mathbb{V})$ either. However, it is possible to find a non-trivial ``eigenfunction'' of $\mtx{M}$ associated with the eigenvalue $0$ in a larger function space. In fact, Castro \cite{martinez2010nonlinear} has constructed such a function, 
\[\Omega_0(x) = -\chi_{[-1,1]}\frac{x}{\sqrt{1-x^2}}\notin\mathbb{V},\]
which verifies the eigenvalue relation $\mtx{M}(\Omega_0) = 0$. To see this, one can calculate that 
\[(-\Delta)^{-1/2}\Omega_0(x) = 
\begin{cases}
-x+\sqrt{x^2-1}, & x>1,\\
-x, & x\in [-1,1], \\
-x-\sqrt{x^2-1}, & x<-1,
\end{cases}\]
and $c(\Omega_0) = -1$. Therefore, $\mtx{M}(\Omega_0) = \chi_{[-1,1]}\big((-\Delta)^{-1/2}\Omega_0 + x\big) \equiv 0$. Furthermore, this function $\Omega_0$ is a self-similar profile of the generalized Constantin--Lax--Majda equation \eqref{eqt:gCLM} for any $a$ with $c_\om=-1$ and $c_l=-a$. That is, 
\[\om_a(x,t) := \frac{1}{T-t}\cdot\Omega_0\left(\frac{x}{(T-t)^{-a}}\right)\]
is a self-similar solution to \eqref{eqt:gCLM} for all values of $a$. Nevertheless, the inherent low regularity of $\Omega_0$ makes it unuseful in proving finite-time singularity of \eqref{eqt:gCLM} from smooth initial data with finite energy.

\subsection{On the leading eigenfunction} 
In this subsection, we complete the proof of the first part of Theorem \ref{thm:main_theorem} (about the basic class) by showing the following property of the leading eigenfunction $f_*$ of $\mtx{M}$.

\begin{theorem}\label{thm:uniqueness_positiveness}
Let $f_*$ be the leading eigenfunction of the eigenvalue problem \eqref{eqt:eigen_problem} corresponding to the largest eigenvalue. Then $f_*$ is strictly positive on $(0,1)$ up to a multiplicative constant.
\end{theorem}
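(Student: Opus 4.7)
The plan is to argue by contradiction, combining the variational characterization of $\lambda_*$ with the simplicity of every positive eigenvalue of $\mtx{M}$ (established as a corollary of Theorem~\ref{thm:non-degeneracy}). Suppose $f_*$ has a zero at some $r \in (0,1)$; by Theorem~\ref{thm:non-degeneracy} this zero is simple, so $f_*$ genuinely changes sign at $r$. Enumerate all zeros of $f_*$ in $[0,1]$ as $0 = r_0 < r_1 < \cdots < r_k = 1$ with $k \geq 2$, and decompose $f_* = \sum_{i=1}^k \epsilon_i \phi_i$, where $\phi_i \in \mathbb{V}$ is the odd extension of $|f_*|\chi_{[r_{i-1},r_i]}$ and $\epsilon_i \in \{\pm 1\}$ alternate in sign. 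The $\phi_i$ have pairwise disjoint supports on $(0,1)$ and are mutually orthogonal in $\dot{H}^1$, so they span a $k$-dimensional subspace $W \subset \mathbb{V}$ containing $f_*$.

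The Rayleigh quotient $R(f) = \|f\|^2_{\dot{H}^{1/2}(\R)}/\|f\|^2_{\dot{H}^1}$ restricted to $W$ becomes a $k\times k$ generalized eigenvalue problem whose largest generalized eigenvalue equals $R(f_*) = \lambda_*$. The off-diagonal entries come from the $\dot{H}^{1/2}$-inner products: for odd $\phi, \psi \in \mathbb{V}$ non-negative on $(0,1)$ with disjoint supports there, $\phi\psi \equiv 0$ and hence $(\phi(x)-\phi(y))(\psi(x)-\psi(y)) = -\phi(x)\psi(y) - \phi(y)\psi(x)$; reducing via oddness to the positive quadrant then yields
\[
\langle \phi, \psi\rangle_{\dot{H}^{1/2}(\R)} = -\frac{8}{\pi}\int_0^1\int_0^1 \frac{xy\,\phi(x)\psi(y)}{(x^2 - y^2)^2}\, dx\, dy \leq 0.
\]
This makes the $k\times k$ matrix representing $R|_W$ a Z-matrix (positive diagonal, non-positive off-diagonals). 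The Courant--Fischer min-max principle gives $\lambda_k \geq \min_{f \in W\setminus\{0\}} R(f)$; combined with the strict upper bound $\lambda_k < 1/(k\pi)$ from Corollary~\ref{cor:eigenvalue_comparison}, this is the natural route toward a contradiction.

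The main obstacle is closing the quantitative gap in this last step: the Z-matrix structure alone is consistent with a sign-changing maximizer (the largest eigenvalue of a Z-matrix can be attained at an alternating-sign eigenvector), so ruling this out requires exploiting more refined properties of $\mtx{M}$ beyond generic $\dot{H}^{1/2}$ analysis---either a sharpening of Theorem~\ref{thm:norm_equivalence} tailored to disjoint-support subspaces, or a direct analysis via the harmonic extension $u$ of $f_*$ to the upper half-plane $\mathbb{H}^2 = \{y>0\}$. This extension is odd in $x$, satisfies $u = 0$ on $\{y=0,\,|x| \geq 1\}$ and the mixed boundary condition $\partial_y u = \lambda_* \partial_x^2 u$ on $(-1,1)\times\{0\}$; on the quadrant $\{x>0,\,y>0\}$ with decay at infinity, a maximum-principle and nodal-line analysis should preclude sign changes of $f_*(x) = u(x,0)$ on $(0,1)$. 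Once no zero in $(0,1)$ is established, strict positivity follows from the interior $C^\infty$-regularity (Theorem~\ref{thm:eigenvalue_problem}) together with the simple-zero property at the endpoints (Theorem~\ref{thm:non-degeneracy}).
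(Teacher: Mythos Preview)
Your proposal sets up a plausible framework but, as you yourself concede, does not close. The Z-matrix observation is correct (and the off-diagonal computation is clean), but it does \emph{not} force the top eigenvector to have constant sign: already for the $2\times 2$ symmetric Z-matrix with entries $1$ on the diagonal and $-a$ off it, the largest eigenvalue $1+a$ is attained at the alternating vector $(1,-1)$. Your Courant--Fischer step only yields $\min_{W}R\leq\lambda_k<1/(k\pi)$, which is compatible with $\max_W R=\lambda_*$ and a sign-changing maximizer, so nothing is contradicted. The harmonic-extension/maximum-principle alternative you sketch is not carried out and would itself require substantial work (the mixed boundary condition $\partial_y u=\lambda_*\partial_x^2 u$ on $(-1,1)$ is Steklov-type and does not fit the standard maximum principle directly).

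The paper's proof supplies exactly the structural ingredient your argument is missing: the \emph{scaling} behavior of the two norms. The $\dot H^{1/2}(\R)$-seminorm is dilation-invariant while the $\dot H^1(\R)$-seminorm scales like $L^{-1/2}$ under dilation of the support to length $L$; hence for any $f\in\mathbb V$ supported in $[-L,L]$ one has $R(f)\leq L\,\lambda_*$ after rescaling to $[-1,1]$. The paper uses only a \emph{two}-piece split $f_*=f_1+f_2$ at the \emph{last} interior zero $r$, so that $f_1$ lives on $[-r,r]$ and $f_2$ on $[-1,-r]\cup[r,1]$. A first-order optimality computation (Lemma~\ref{lem:H1norm_decomposition}) gives
\[
\frac{\|f_1\|_{\dot H^{1/2}}^2}{\|f_1\|_{\dot H^1}^2}+\frac{\|f_2\|_{\dot H^{1/2}}^2}{\|f_2\|_{\dot H^1}^2}\ \geq\ \lambda_*,
\]
while the scaling step bounds the first ratio by $r\lambda_*$. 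For $f_2$, an additional ``translate towards the origin'' operator (which preserves $\dot H^1$ but \emph{strictly} increases $\dot H^{1/2}$ when the inner gap $[-r,r]$ is nonempty) followed by rescaling gives a \emph{strict} bound $(1-r)\lambda_*$. Summing yields $\lambda_*>\lambda_*$, the contradiction. None of these ideas---the two-piece split at the outermost zero, the optimality-based decomposition inequality, or the dilation/translation operators---appear in your outline, and the Z-matrix route does not recover them.
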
 

Some preparations are needed before we can prove this theorem. \\

It is well-known that the $H^{1/2}$-seminorm on $\R$ can be calculated by extending the function to a higher-dimensional space. Given $f\in H^{1/2}(\R)$, we let $g_f$ solve the following Dirichlet problem in the upper-half plane $\R^2_+$
\[ -\Delta g_f = 0,\quad g_f |_{y = 0} = f. \]
Then
\[ \|f\|_{\dot{H}^{1/2}(\R)} = \|\nabla g_f\|_{L^2(\R^2_+)}. \]
In what follows, for a given function $f$ on $\R$, we will always use the notation $g_f$ to denote its harmonic extension to $\R_+^2$. We first use this technique to prove a decomposition inequality of $f_*$.

\begin{lemma}\label{lem:H1norm_decomposition}
Assume that $f_*$ can be decomposed as $f_* = f_1 + f_2$ such that each $f_i\in \mathbb{V}$ is not identically zero, and the intersection of their supports has Lebesgue measure zero. Then 
\[\frac{\|f_1\|_{\dot{H}^{1/2}(\R)}^2}{\|f_1\|_{\dot{H}^1(\R)}^2} + \frac{\|f_2\|_{\dot{H}^{1/2}(\R)}^2}{\|f_2\|_{\dot{H}^1(\R)}^2} \geq \frac{\|f_*\|_{\dot{H}^{1/2}(\R)}^2}{\|f_*\|_{\dot{H}^1(\R)}^2}.\]
\end{lemma}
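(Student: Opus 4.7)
The plan is to reduce the stated inequality to a purely algebraic one in three parameters and then close it by combining Cauchy--Schwarz with AM--GM. Two ingredients are needed: first, the $\dot{H}^1$-orthogonality of $f_1$ and $f_2$ (coming from the essentially disjoint supports), which makes $\|f_*\|_{\dot{H}^1(\R)}^2$ split additively; second, the bilinear expansion of $\|f_*\|_{\dot{H}^{1/2}(\R)}^2$, whose cross term must be controlled by Cauchy--Schwarz.

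First I would check that $\langle f_1, f_2\rangle_{\dot{H}^1(\R)} = 0$. Each $f_i \in H^1_0([-1,1])$ is continuous and vanishes identically on the open set $\R \setminus \mathrm{supp}(f_i)$, so its weak derivative $\partial_x f_i$ is zero a.e.\ outside $\mathrm{supp}(f_i)$. The assumption that $\mathrm{supp}(f_1)\cap \mathrm{supp}(f_2)$ has Lebesgue measure zero then forces $(\partial_x f_1)(\partial_x f_2) = 0$ a.e., so
\[
\|f_*\|_{\dot{H}^1(\R)}^2 = \|f_1\|_{\dot{H}^1(\R)}^2 + \|f_2\|_{\dot{H}^1(\R)}^2.
\]
On the $\dot{H}^{1/2}$ side, by bilinearity (equivalently, by uniqueness of the harmonic extension, $g_{f_*} = g_{f_1} + g_{f_2}$, which turns the identity into a Dirichlet-energy expansion on $\R^2_+$),
\[
\|f_*\|_{\dot{H}^{1/2}(\R)}^2 = \|f_1\|_{\dot{H}^{1/2}(\R)}^2 + 2\langle f_1, f_2\rangle_{\dot{H}^{1/2}(\R)} + \|f_2\|_{\dot{H}^{1/2}(\R)}^2.
\]

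Setting $a_i := \|f_i\|_{\dot{H}^{1/2}(\R)}^2$, $b_i := \|f_i\|_{\dot{H}^1(\R)}^2 > 0$, and $c := \langle f_1, f_2\rangle_{\dot{H}^{1/2}(\R)}$, the target inequality becomes $a_1/b_1 + a_2/b_2 \geq (a_1 + a_2 + 2c)/(b_1 + b_2)$, equivalent after clearing denominators to $a_1 b_2^2 + a_2 b_1^2 \geq 2c\, b_1 b_2$. AM--GM bounds the left side below by $2 b_1 b_2 \sqrt{a_1 a_2}$, and Cauchy--Schwarz applied to the positive semidefinite bilinear form $\langle\cdot,\cdot\rangle_{\dot{H}^{1/2}(\R)}$ gives $c \leq |c| \leq \sqrt{a_1 a_2}$; chaining the two closes the argument.

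The only mild obstacle is justifying the vanishing of the $\dot{H}^1$ cross term at the level of weak derivatives, since the supports are disjoint merely in Lebesgue measure rather than in the classical sense. Once that is settled, the remainder is a clean application of Cauchy--Schwarz and AM--GM, and the specific role of $f_*$ as the leading eigenfunction of $\mtx{M}$ plays no part — the same inequality holds for any $f \in \mathbb{V}$ admitting such a decomposition.
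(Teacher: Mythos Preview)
Your argument is correct, and it is genuinely different from the paper's. The paper exploits the fact that $f_*$ is the maximizer of the Rayleigh quotient \eqref{eqt:variational_form}: it differentiates $t\mapsto \|f_1+tf_2\|_{\dot{H}^{1/2}}^2/\|f_1+tf_2\|_{\dot{H}^1}^2$ at $t=1$, uses the first-order optimality condition to obtain the \emph{equality} $\alpha_1-\alpha_2=1-2\theta$ (in the notation there), and combines this with the triangle inequality $\sqrt{\alpha_1}+\sqrt{\alpha_2}\geq 1$ to conclude. Your route bypasses optimality entirely: after the $\dot{H}^1$-splitting, the target reduces to $a_1b_2^2+a_2b_1^2\geq 2cb_1b_2$, which follows from Cauchy--Schwarz $c\leq\sqrt{a_1a_2}$ and AM--GM (equivalently, Titu's lemma applied to $(\sqrt{a_1})^2/b_1+(\sqrt{a_2})^2/b_2\geq(\sqrt{a_1}+\sqrt{a_2})^2/(b_1+b_2)$). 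Your approach is shorter and, as you note, proves the inequality for an arbitrary $f\in\mathbb{V}$ admitting such a decomposition, not just the leading eigenfunction. The paper's approach, by contrast, extracts an equation from criticality that in principle carries more information, though for this lemma only the inequality is needed.
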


\begin{proof}
Let $f_t:= f_1 + tf_2$. By the non-overlapping assumption on $f_i$, 
\begin{equation}\label{eqt:H1norm_decomposition}
\|f_t\|_{\dot{H}^1}^2 = \|f_1\|_{\dot{H}^1}^2 + t^2\|f_2\|_{\dot{H}^1}^2,
\end{equation}
so 
\[\left.\frac{\diff\ }{\diff t}\right|_{t=1}\|f_t\|_{\dot{H}^1}^2 = 2\|f_2\|_{\dot{H}^1}^2.\]
On the other hand,
\begin{align*}
\|f_t\|_{\dot{H}^{1/2}}^2 &= \int_{\R_+^2} \big|\nabla g_{f_1} + t\nabla g_{f_2}\big|^2\idiff x \idiff y\\
&= \int_{\R_+^2} \big|\nabla g_{f_1}\big|^2 + t^2\big|\nabla g_{f_2}\big|^2 + 2t \nabla g_{f_1}\cdot \nabla g_{f_2}\idiff x \idiff y.
\end{align*}
Hence,
\begin{align*}
\left.\frac{\diff\ }{\diff t}\right|_{t = 1}\|f_t\|_{\dot{H}^{1/2}}^2
&=  \int_{\R_+^2} 2\big|\nabla g_{f_2}\big|^2 + 2\nabla g_{f_1}\cdot \nabla g_{f_2}\idiff x \idiff y\\
&=  \int_{\R_+^2} \big|\nabla g_{f_2}\big|^2
- \big|\nabla g_{f_1}\big|^2
+ \big|\nabla g_{f_1}+\nabla g_{f_2}\big|^2\idiff x \idiff y\\
&=  \|f_2\|_{\dot{H}^{1/2}}^2 - \|f_1\|_{\dot{H}^{1/2}}^2
+ \|f_*\|_{\dot{H}^{1/2}}^2.
\end{align*}
Combining these calculations, we find that
\[
\left.\frac{\diff\ }{\diff t}\right|_{t = 1} \frac{\|f_t\|_{\dot{H}^{1/2}}^2}{\|f_t\|_{\dot{H}^{1}}^2}
= \frac{\|f_2\|_{\dot{H}^{1/2}}^2 - \|f_1\|_{\dot{H}^{1/2}}^2
+ \|f_*\|_{\dot{H}^{1/2}}^2}{\|f_*\|_{\dot{H}^1}^2} - \frac{\|f_*\|_{\dot{H}^{1/2}}^2\cdot 2 \|f_2 \|_{\dot{H}^1}^2}{\|f_*\|_{\dot{H}^1}^4}.
\]
By the optimality of $f_*$ in the variational problem \eqref{eqt:variational_form}, the left-hand side has to be $0$, which gives
\[
\frac{\|f_1\|_{\dot{H}^{1/2}}^2 - \|f_2\|_{\dot{H}^{1/2}}^2}{\|f_*\|_{\dot{H}^{1/2}}^2}= 1 - \frac{2\|f_2 \|_{\dot{H}^1}^2}{\|f_*\|_{\dot{H}^1}^2}.
\]
Denote
\[
\alpha_1 : = \frac{\|f_1\|_{\dot{H}^{1/2}}^2}{\|f_*\|_{\dot{H}^{1/2}}^2},\quad
\alpha_2 : = \frac{\|f_2\|_{\dot{H}^{1/2}}^2}{\|f_*\|_{\dot{H}^{1/2}}^2},\quad
\theta : = \frac{\|f_2\|_{\dot{H}^1}^2}{\|f_*\|_{\dot{H}^1}^2}.
\]
Then the above equation becomes
$\alpha_1 -\alpha_2 = 1-2\theta$.
On the other hand, the triangle inequality gives
$\sqrt{\alpha_1} + \sqrt{\alpha_2}\geq 1$.
Combining them yields
$\sqrt{\alpha_2+1-2\theta} + \sqrt{\alpha_2}\geq 1$,
which implies
$\alpha_2\geq \theta^2$, and thus $\alpha_1 = \alpha_2 + 1-2\theta \geq (1-\theta)^2$.
Therefore,
\[
\frac{\alpha_1}{1-\theta} + \frac{\alpha_2}{\theta} \geq  1.
\]
This is exactly the desired inequality thanks to \eqref{eqt:H1norm_decomposition} with $t=1$. 
\end{proof}

Next, we define two operators that manipulate the supports of functions. The first one is a ``translation towards the origin'' operator $\mtx{T}$ defined as follows: let $f$ be a non-zero odd function on $\R$, and denote 
\[l_f := \sup\, \{l>0:\, f \equiv0 \text{ on } [-l,l]\}.\] 
Then define 
\[\mtx{T}(f)(x) := \left\{
\begin{array}{ll}
f(x+l_f),& \text{if } x\geq 0,\\
f(x-l_f),& \text{if } x< 0.
\end{array}
\right.\]
It is straightforward to verify that, for $f\in \mathbb{V}$ that does not change sign on $[0,1]$, $\mtx{T}(f)\in \mathbb{V}$, and
\[\|\mtx{T}(f)\|_{\dot{H}^1(\R)} = \|f\|_{\dot{H}^1(\R)},\quad \|\mtx{T}(f)\|_{\dot{H}^{1/2}(\R)} \geq \|f\|_{\dot{H}^{1/2}(\R)},\]
and, in particular, if $l_f>0$ then 
\[\|\mtx{T}(f)\|_{\dot{H}^{1/2}(\R)} > \|f\|_{\dot{H}^{1/2}(\R)}.\]
The inequality follows from the explicit formula of the $H^{1/2}$-seminorm:
\[\|f\|_{\dot{H}^{1/2}(\R)}^2 = \frac{1}{2\pi}\int_{\R^2}\left(\frac{f(x)-f(y)}{x-y}\right)^2 \idiff x \idiff y.\]

The second operator $\mtx{S}$ rescales the support of an odd function to $[-1,1]$. Let $f$ be a non-zero, odd, and compactly supported function on $\R$, and denote 
\[L_f := \inf\, \{L>0:\, f \equiv0 \text{ on } (L,+\infty)\}.\]
Define
\[\mtx{S}(f)(x) = f(L_f\,x).\] 
It is clear that, for a non-zero $f\in \mathbb{V}$, $\mtx{S}(f)\in \mathbb{V}$, and 
\[\|\mtx{S}(f)\|_{\dot{H}^{1/2}(\R)}^2 =  \|f\|_{\dot{H}^{1/2}(\R)}^2,\quad \|\mtx{S}(f)\|_{\dot{H}^1(\R)}^2 =  L_f\|f\|_{\dot{H}^1(\R)}^2.\]

We are now ready to prove Theorem \ref{thm:uniqueness_positiveness}.

\begin{proof}[Proof of Theorem \ref{thm:uniqueness_positiveness}]
We prove by contradiction that $f_*$ is strictly positive on $(0,1)$ up to a multiplicative constant. Suppose that this is not true. Then since $f_*\in C(\R)\cup C^1([-1,1])$, $f_*$ must have at least one zero in $(0,1)$. Define 
\[r := \sup\{x\in(0,1): f_*(x) = 0\} > 0.\]
By Lemma \ref{thm:non-degeneracy}, $\partial_{x-}f_*(1)\neq 0$, so $r<1$. We may thus assume that $f_*$ is strictly positive in $(r,1)$. Consider the decomposition $f_* = f_1 + f_2$, where
\[f_1 = \chi_{[-r,r]}f_*\in \mathbb{V},\quad f_2 = (\chi_{[-1,-r]}+\chi_{[r,1]})f_*\in \mathbb{V}.\]
Since $r$ is a simple zero, $f_1$ and $f_2$ are not identically zero, and $L_{f_1} = l_{f_2} = r$. Then $f_1,f_2$ satisfy the assumption in Lemma \ref{lem:H1norm_decomposition}. Therefore, 
\[ \frac{\|f_*\|_{\dot{H}^{1/2}(\R)}^2}{\|f_*\|_{\dot{H}^1(\R)}^2} \leq \frac{\|f_1\|_{\dot{H}^{1/2}(\R)}^2}{\|f_1\|_{\dot{H}^1(\R)}^2} + \frac{\|f_2\|_{\dot{H}^{1/2}(\R)}^2}{\|f_2\|_{\dot{H}^1(\R)}^2}.\]
On the other hand, by the definition of $\mtx{S}$ and the optimality of $f_*$, we have that
\[\frac{\|f_1\|_{\dot{H}^{1/2}(\R)}^2}{\|f_1\|_{\dot{H}^1(\R)}^2} = r\ \frac{\|\mtx{S}(f_1)\|_{\dot{H}^{1/2}(\R)}^2}{\|\mtx{S}(f_1)\|_{\dot{H}^1(\R)}^2}\leq r\ \frac{\|f_*\|_{\dot{H}^{1/2}(\R)}^2}{\|f_*\|_{\dot{H}^1(\R)}^2},\]
and
\[\frac{\|f_2\|_{\dot{H}^{1/2}(\R)}^2}{\|f_2\|_{\dot{H}^1(\R)}^2}< \frac{\|\mtx{T}(f_2)\|_{\dot{H}^{1/2}(\R)}^2}{\|\mtx{T}(f_2)\|_{\dot{H}^1(\R)}^2} = (1-r)\ \frac{\|\mtx{S}(\mtx{T}(f_2))\|_{\dot{H}^{1/2}(\R)}^2}{\|\mtx{S}(\mtx{T}(f_2))\|_{\dot{H}^1(\R)}^2}\leq (1-r)\ \frac{\|f_*\|_{\dot{H}^{1/2}(\R)}^2}{\|f_*\|_{\dot{H}^1(\R)}^2}.\]
This further implies that 
\[\frac{\|f_1\|_{\dot{H}^{1/2}(\R)}^2}{\|f_1\|_{\dot{H}^1(\R)}^2} + \frac{\|f_2\|_{\dot{H}^{1/2}(\R)}^2}{\|f_2\|_{\dot{H}^1(\R)}^2} < \frac{\|f_*\|_{\dot{H}^{1/2}(\R)}^2}{\|f_*\|_{\dot{H}^1(\R)}^2},\]
which leads to a contradiction. Therefore, we conclude that $f_*$ must be strictly positive on $(0,1)$ up to a multiplicative constant.
\end{proof}

Having shown that $\om = -f_*$ is the unique solution (up to rescaling) to the relation \eqref{eqt:eigen_form} that is strictly negative on $(0,1)$, we can conclude that it coincides with the self-similar profile constructed in \cite{chen2021finite}.

\begin{corollary}\label{cor:equilibrium}
The self-similar profile $\Omega_*\in H^1(\R)$ of the De Gregorio model \eqref{eqt:DG} obtained in \cite{chen2021finite} is identical to the leading eigenfunction $f_*$ of $\mtx{M}$ up to rescaling.
\end{corollary}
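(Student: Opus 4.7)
The plan is to verify that $\Omega_*$, after suitably rescaling, satisfies all the characterizing properties of the leading eigenfunction $f_*$, and then invoke simplicity of $\lambda_1$ to conclude equality up to a multiplicative constant. From \cite{chen2021finite} I take $\Omega_* \in H^1(\R)$ odd, compactly supported, and strictly negative on the interior of its positive-half support, solving \eqref{eqt:main_equation} with $c_l = c_\om$. By Proposition \ref{prop:compact_support} and the rescaling \eqref{eqt:scaling}, I may assume $\Omega_*$ is supported in $[-1,1]$ and satisfies the normalization \eqref{eqt:support_size}; Proposition \ref{prop:c_omega} then pins down the corresponding $c_\om$.

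Next I would show that $\Omega_*$ is an eigenfunction of $\mtx{M}$. Strict negativity on $(0,1)$ lets me apply Proposition \ref{prop:necessary_condition} on the whole interval, producing a finite nonzero constant $C_1$ with $C_1\,\Omega_*(x) = u(x) + c_\om x$ on $(0,1)$. Both sides are odd in $x$ and both vanish at $x = 0$ and at $x = \pm 1$ (the endpoint identities using Proposition \ref{prop:c_omega}), so the relation extends to all of $[-1,1]$; equivalently, \eqref{eqt:eigen_form} holds with $\lambda := -C_1$. The positive definiteness of $\mtx{M}$ on $\mathbb{V}$, ensured by the norm equivalence of Theorem \ref{thm:norm_equivalence} together with $\Omega_* \not\equiv 0$, forces $\lambda > 0$. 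Hence $\Omega_*$ is a genuine nontrivial eigenfunction of $\mtx{M}$.

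To identify it with the leading eigenfunction I would invoke the sign characterization: since $\Omega_*$ does not change sign on $(0,1)$, Corollary \ref{cor:sign-changing} rules out any correspondence with $\lambda_n$, $n \geq 2$, whose eigenfunctions are sign-changing there. Therefore $\lambda = \lambda_1$, and the simplicity of $\lambda_1$ (the corollary following Theorem \ref{thm:non-degeneracy}) yields $\Omega_* = \kappa\, f_*$ for some $\kappa \neq 0$, which is exactly the stated identification modulo the rescaling \eqref{eqt:scaling}.

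The step I expect to require the most care is upgrading the non-positivity of $\Omega_*$ on $\R_+$ (as guaranteed by \cite{chen2021finite}) to strict negativity on $(0,1)$, which is what is needed to invoke Proposition \ref{prop:necessary_condition} on the entire interval at once. If only non-positivity is explicitly available, I would proceed by contradiction: an interior zero $r \in (0,1)$ where $\Omega_*$ does not change sign must be a double zero, and applying Proposition \ref{prop:necessary_condition} on an adjacent subinterval where $\Omega_* < 0$ and passing to $x = r$ yields $u(r) + c_\om r = 0$ together with $\partial_x \Omega_*(r) = 0$; an ODE uniqueness argument for \eqref{eqt:main_equation} near $r$ would then force $\Omega_* \equiv 0$ on a neighborhood of $r$, contradicting its nontriviality.
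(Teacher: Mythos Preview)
Your main argument is correct and matches the paper's proof essentially line for line: rescale $\Omega_*$ to have support $[-1,1]$, invoke Proposition~\ref{prop:necessary_condition} on $(0,1)$ using strict negativity to obtain the eigenfunction relation \eqref{eqt:eigen_form}, and then use Corollary~\ref{cor:sign-changing} (the paper also cites Theorem~\ref{thm:uniqueness_positiveness}, but your use of simplicity is equivalent) to identify it with $f_*$. The paper simply takes strict negativity of $\Omega_*$ on $(0,1)$ as a fact from \cite{chen2021finite}, so your hedge in the last paragraph is not needed for the argument as stated.

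That said, your backup sketch for upgrading non-positivity to strict negativity is not quite right as written: the ``ODE uniqueness argument for \eqref{eqt:main_equation} near $r$'' is problematic because once you have shown $u(r)+c_\om r = 0$, the coefficient $(c_\om+u_x)/(c_l x + u)$ in the linear ODE for $\om$ is singular at $r$, so standard uniqueness does not apply and $\om\equiv 0$ does not follow directly. A cleaner route, if you ever need it, is to first establish the relation $C_1\Omega_* = u + c_\om x$ on each maximal subinterval where $\Omega_*<0$, observe this forces $\Omega_*\in H^2$ there, and then use the identity \eqref{eqt:non-degeneracy} (or its piecewise analogue in Theorem~\ref{thm:non-degeneracy_general}) to rule out a zero with vanishing one-sided derivative.
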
 

\begin{proof} The self-similar profile $\Omega_*\in H^1(\R)$ obtained in \cite{chen2021finite}, after proper rescaling, is odd in $x$, supported on $[-1,1]$, and strictly negative on $(0,1)$, and the corresponding $c_\om$ is given by $c_\om = (-\Delta)^{-1/2}\Omega_*(1)$. Therefore, by Proposition \ref{prop:necessary_condition}, $\Omega_*$ must be an eigenfunction of $\mtx{M}$ in $\mathbb{V}$. However, according to Corollary \ref{cor:sign-changing} and Theorem \ref{thm:uniqueness_positiveness}, the only eigenfunction of $\mtx{M}$ that is strictly negative on $(0,1)$ (up to a multiplicative constant) is the leading eigenfunction $f_*$. This proves the claimed result.
\end{proof}

Figure \ref{fig:profile_compare} in the appendix compares the leading eigenfunction $f_*$ of $\mtx{M}$ and the approximate self-similar profile $\widetilde{\Omega}_*$ numerically obtained in \cite{chen2021finite} by solving the dynamic rescaling equation \eqref{eqt:dynamic_rescaling}. We can see that the two profiles, when properly rescaled, are hardly distinguishable. Moreover, $\widetilde{\Omega}_*$ approximately solves the eigenvalue problem \eqref{eqt:eigen_problem} with the ratio $\mtx{M}(\widetilde{\Omega}_*)/\widetilde{\Omega}_*$ uniformly close to $\lambda_*$. 

\subsection{On the nonlinear stability}\label{sec:nonlinear_stability}
We discuss the similarities between the nonlinear stability results in \cite{chen2021finite} and in \cite{jia2019gregorio,lei2020constantin}. \\

In order to study the finite-time self-similar singularity of the De Gregorio model, Chen, Hou and Huang \cite{chen2021finite} applied the change of variables (modified form \cite{mclaughlin1986focusing,landman1988rate})
\[C_\om(t)\cdot\om\left(\frac{x}{C_l(t)},\tau(t)\right)\rightarrow \om(x,t),\quad C_\om(t)C_l(t)\cdot u\left(\frac{x}{C_l(t)},\tau(t)\right)\rightarrow u(x,t)\]
with 
\[C_\om(t) = \exp\left(\int_0^tc_\om(s)\idiff s\right),\quad C_l(t) = \exp\left(\int_0^tc_l(s)\idiff s\right),\quad \tau(t) =  \int_0^t C_\om(s)\idiff s\]
to reformulate the original equation \eqref{eqt:DG} into an equivalent form 
\begin{equation}\label{eqt:dynamic_rescaling}
\om_t + (c_l(t)x + u)\om_x = (c_\om(t) + u_x)\om,\quad u_x = \mtx{H}(\om),\quad u(0) = 0.
\end{equation}
Note that our notations are slightly different from those in \cite{chen2021finite}. We call \eqref{eqt:dynamic_rescaling} the dynamic rescaling formulation of \eqref{eqt:DG}. If a solution $(\om(x,t), c_\om(t),c_l(t))$ to the initial value problem of \eqref{eqt:dynamic_rescaling} converges to some equilibrium $(\Omega(x),c_\om,c_l)$ (which is a solution to the self-similar profile equation \eqref{eqt:main_equation}), then the corresponding solution to \eqref{eqt:DG} (recovered by the inverse change of variables) will develop a finite-time blowup asymptotically in the self-similar form \eqref{eqt:self-similar_solution}. Therefore, to prove finite-time self-similar singularity of the De Gregorio model \eqref{eqt:DG} from smooth initial data, one needs to prove the (local) convergence of \eqref{eqt:dynamic_rescaling} to some equilibrium $(\Omega(x),c_\om,c_l)$ such that $\Omega$ is non-trivial and $c_\om < 0$.

The time dependent functions $c_\om(t)$ and $c_l(t)$ are two degrees of freedom in the change of variables that determine the dynamic rescaling rates in the spatial scale and the magnitude. Recall that $\Omega_*\in \mathbb{V}$ denotes the self-similar profile constructed in \cite{chen2021finite} that is negative on $(0,1)$. Chen et al. showed that, under the constraint
\begin{equation}\label{eqt:normalization_condition}
c_\om(t) = c_l(t) = -u(1,t) = -\frac{1}{\pi}\int_{\R}\om(y,t)\ln|1-y|\idiff y,
\end{equation}
which is consistent with our condition \eqref{eqt:c_omega}, the equilibrium $\Omega_*$ of \eqref{eqt:dynamic_rescaling} is locally nonlinearly stable in the space $\mathbb{V}$ with respect to some weighted $H^1$-norm, hence proving the finite-time self-similar singularity of the De Gregorio model from some smooth, compactly supported data.\\

Next, we turn to the nonlinear stability established in \cite{jia2019gregorio,lei2020constantin}. Jia, Stewart and Sverak \cite{jia2019gregorio} proved that $A\sin(x-x_0)$ is a nonlinearly stable equilibrium of the De Gregorio model on $\mathbb{S}^1$, with $A$ and $x_0$ dependent on the initial data. Lei, Liu and Ren \cite{lei2020constantin} provided an alternative proof of this result. For the convenience of comparison, instead of considering the De Gregorio model on $\mathbb{S}^1$, we consider a modified De Gregorio model on $[-1,1]$:
\begin{equation}\label{eqt:modified_DG}
\om_t + u\om_x = u_x \om, \quad u = -(-\widetilde\Delta)^{-1/2}\om,\quad x\in [-1,1].
\end{equation}
Recall that $(-\widetilde\Delta)^{-1}$ is the inverse Laplacian on $[-1,1]$ associated with zero Dirichlet boundary conditions. It is not hard to check that, \eqref{eqt:modified_DG} is essentially equivalent to the De Gregorio model on $\mathbb{S}^1$ with a proper normalization condition on $u$. Then, the main result in \cite{jia2019gregorio,lei2020constantin} can translate to that $-\sin(\pi x)\in \mathbb{V}$ is a nonlinearly stable equilibrium of \eqref{eqt:modified_DG} given that the initial solution belongs to $\mathbb{V}$ and is properly normalized and sufficiently close to $-\sin(\pi x)$ in some weighted $H^1$-norm.\\

To see the similarity between these two nonlinear stability results, we first rewrite equation \eqref{eqt:dynamic_rescaling} under the constraint \eqref{eqt:normalization_condition} with initial data in $\mathbb{V}$ into a more compact form 
\begin{equation}\label{eqt:dynamic_rescaling_compact}
\om_t + \big[\mtx{M}(\om)\,,\,\om\big] = 0, \quad \om(x,0)\in \mathbb{V}.
\end{equation}
Here the commutator $[\cdot,\cdot]$ is defined as $[f,g] = f_x\,g - f\,g_x$. We can do this because a solution $\om(x,t)$ to \eqref{eqt:dynamic_rescaling} will remain in $\mathbb{V}$ for all $t\geq0$ if $\om(x,0)$ belongs to $\mathbb{V}$. The result in \cite{chen2021finite} and Corollary \ref{cor:equilibrium} together state that the leading eigenfunction $\Omega_*$ of $\mtx{M}$ is a nonlinearly stable equilibrium of \eqref{eqt:dynamic_rescaling_compact} with solutions restricted in $\mathbb{V}$.

Similarly, we can rewrite \eqref{eqt:modified_DG} with initial data in $\mathbb{V}$ as
\begin{equation}\label{eqt:modified_DG_compact}
\om_t + \big[(-\widetilde\Delta)^{-1/2}\om\,,\,\om\big] = 0, \quad \om(x,0)\in \mathbb{V}.
\end{equation}
The result in \cite{jia2019gregorio} implies that the leading eigenfunction $-\sin(\pi x)$ of $(-\widetilde\Delta)^{-1/2}$ is a nonlinearly stable equilibrium of \eqref{eqt:modified_DG_compact} with solutions properly normalized and restricted in $\mathbb{V}$.

It has been shown that $\mtx{M}$ and $(-\widetilde\Delta)^{-1/2}$ are both self-adjoint, positive definite compact operators on $\mathbb{V}$ with comparable sorted eigenvalues, and the semi-norms on $\mathbb{V}$ defined by them are equivalent. Moreover, their respective leading eigenfunctions, $f_*$ and $\sin(\pi x)$, are qualitatively similar (as shown in Figure \ref{fig:Eigenfunctions}(a)); they are both positive on $(0,1)$ and have a single bump. From this point of view, it is reasonable to believe that the nonlinear stability results established in \cite{chen2021finite} and in \cite{jia2019gregorio}, respectively, share the same nature. Interestingly, \eqref{eqt:dynamic_rescaling_compact} is the dynamic rescaling formulation of the De Gregorio model on the real line, while \eqref{eqt:modified_DG_compact} corresponds to the De Gregorio model on the circle.

More generally, we may consider an equation of the form
\begin{equation}\label{eqt:general_equation}
\om_t + \big[\mtx{A}(\om)\,,\,\om\big] = 0, \quad \om(x,0)\in \mathbb{V},
\end{equation}
where $\mtx{A}$ is some self-adjoint, compact operator on $\mathbb{V}$. Suppose that $\mtx{A}$ has a unique leading eigenfunction $f_*$, which is apparently an equilibrium of \eqref{eqt:general_equation}. Based on the the preceding discussions, it is reasonable to conjecture that, under some mild assumptions on $\mtx{A}$, any solution of \eqref{eqt:general_equation} that is properly normalized and initially close to $f_*$ will eventually converge to $f_*$ as $t\rightarrow+\infty$. This will be an interesting problem to study in future works. To get started, one may consider to establish the linear stability of \eqref{eqt:general_equation} around the equilibrium $f_*$ by generalizing the methods in \cite{jia2019gregorio,lei2020constantin} that deal with the linear stability of \eqref{eqt:modified_DG_compact}. If this conjecture is true, then we can give an alternative and probably simpler proof (that is purely analytic) of the finite-time self-similar singularity of the De Gregorio model on the real line from smooth initial data. Note that the only proof of this result so far by Chen et al. \cite{chen2021finite} relies heavily on computer assistance, which can be quite difficult to digest for most of the readers.

\subsection{An alternative formulation} 
We close this section with an alternative characterization of the eigenfunctions to the eigenvalue problem \eqref{eqt:eigen_problem}, which is of independent interest. As a byproduct, we can show that $f_*/x$ is strictly monotone on $(0,1]$.

Given a function $f\in \mathbb{V}\cap C^1([-1,1])$, we apply integration by parts to compute that 
\begin{align*}
(-\Delta)^{-1/2}f(x) &= -\frac{1}{\pi}\int_0^1 f(y)\ln\left|\frac{x-y}{x+y}\right| \idiff y \\
&= -\frac{1}{\pi}\int_0^1 \frac{f(y)}{y}\cdot \partial_y\left(\frac{y^2-x^2}{2}\ln\left|\frac{x-y}{x+y}\right|-xy\right)\idiff y \\
&= \frac{1}{\pi}\int_0^1 \left(\frac{f(y)}{y}\right)'\cdot \left(\frac{y^2-x^2}{2}\ln\left|\frac{x-y}{x+y}\right|-xy\right)\idiff y.
\end{align*}
We used $f(1) = 0$ and the odd symmetry of $f$. Hence, we can write
\begin{equation}\label{eqt:mid_step}
\frac{(-\Delta)^{-1/2}f(x)}{x} = \frac{1}{\pi}\int_0^1 \left(\frac{f(y)}{y}\right)'\, y\big(F(x/y)-1\big)\idiff y,
\end{equation}
where 
\begin{equation}\label{eqt:F_defition}
F(t):= \frac{1-t^2}{2t}\ln\left|\frac{t-1}{t+1}\right|, \quad t\geq 0.
\end{equation}

\begin{lemma}\label{lem:F_property} The function $F$ defined in \eqref{eqt:F_defition} satisfies
\begin{enumerate}
\item $F\in C([0,+\infty))$, $F(0) = -1$, $F(1) = 0$, $\lim_{t\rightarrow+\infty}F(t)=1$;
\item $F(1/t) = -F(t)$; 
\item $F'(t)>0$, $t\in (0,1)\cup (1,+\infty)$.
\end{enumerate}
\end{lemma}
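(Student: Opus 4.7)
The plan is to treat the three claims separately, leveraging the factored form $F(t)=\frac{1-t^2}{2t}L(t)$ with $L(t):=\ln\left|\frac{1-t}{1+t}\right|$.

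For part (1), the only nontrivial values are the one-sided limits at $t=0$, $t=1$, and $t\to+\infty$. Near $t=0$ I would use the expansion $L(t)=-2t-\frac{2t^3}{3}-\cdots$ to see that $\frac{1-t^2}{2t}L(t)\to-1$. Near $t=1$ I would factor $1-t^2=-(t-1)(t+1)$ and note that $(t-1)\ln|t-1|\to 0$, so $F(1)=0$. For $t\to+\infty$, writing $L(t)=\ln\left|\frac{1-1/t}{1+1/t}\right|=-2/t+O(1/t^3)$ and $\frac{1-t^2}{2t}=-\frac{t}{2}+\frac{1}{2t}$ gives $F(t)\to 1$. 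Continuity on $(0,1)\cup(1,\infty)$ is automatic from the formula.

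Part (2) is a direct computation: $\frac{1-1/t^2}{2/t}=\frac{t^2-1}{2t}=-\frac{1-t^2}{2t}$, and $\left|\frac{1/t-1}{1/t+1}\right|=\left|\frac{1-t}{1+t}\right|$, so $F(1/t)=-F(t)$.

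The main work is part (3). I would compute $F'(t)$ explicitly. Observing that on both $(0,1)$ and $(1,\infty)$ one has $L'(t)=-2/(1-t^2)$, and $\left(\frac{1-t^2}{2t}\right)'=-\frac{t^2+1}{2t^2}$, the product rule yields
\begin{equation}
F'(t)=\frac{t^2+1}{2t^2}\ln\left|\frac{1+t}{1-t}\right|-\frac{1}{t}.
\end{equation}
For $t\in(0,1)$, showing $F'(t)>0$ reduces to the single-variable inequality
\begin{equation}
(t^2+1)\ln\frac{1+t}{1-t}>2t,\quad t\in(0,1).
\end{equation}
I would verify this via the Taylor series $\ln\frac{1+t}{1-t}=2\sum_{k\geq 0}\frac{t^{2k+1}}{2k+1}$, since multiplying by $t^2+1$ produces $2t$ plus a series with strictly positive coefficients in $t^3,t^5,\dots$. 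For $t\in(1,\infty)$, I would avoid redoing the calculation by differentiating the identity $F(1/t)=-F(t)$ from part (2), which gives $F'(t)=t^{-2}F'(1/t)$, so positivity on $(1,\infty)$ follows from positivity on $(0,1)$.

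The main obstacle is really just the clean reduction in part (3); once the derivative is rearranged into the form above, the Taylor-series positivity argument is routine, and the inversion symmetry handles $t>1$ for free.
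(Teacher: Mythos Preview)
Your proof is correct and follows essentially the same strategy as the paper: use the Taylor expansion of $\ln\frac{1+t}{1-t}$ on $(0,1)$ and then invoke the symmetry $F(1/t)=-F(t)$ for $t>1$. The only cosmetic difference is that the paper expands $F$ itself as the convergent series $F(t)=-1+\sum_{n\geq 1}\frac{2t^{2n}}{4n^2-1}$, from which monotonicity (and the value $F(0)=-1$) is read off directly, whereas you first compute $F'(t)$ and then verify its positivity via the same series; both routes are equally valid.
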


\begin{proof} Properties (1) and (2) are straightforward to check. We only prove (3). For $t\in(0,1)$, $F(t)$ has the Taylor expansion 
\[F(t) = -\frac{1-t^2}{2t}\suml_{n=0}^\infty\frac{2t^{2n+1}}{2n+1} = -1 + \suml_{n=1}^\infty\frac{2t^{2n}}{4n^2-1}.\]
Hence, $F(t)$ is monotone increasing on $(0,1)$. By the symmetry property $F(t)= -F(1/t)$, we know $F(t)$  that is also monotone increasing on $(1,+\infty)$.
\end{proof}
 
We proceed by taking the derivative of \eqref{eqt:mid_step} to find that
\[\left(\frac{(-\Delta)^{-1/2}f(x)}{x}\right)' = \frac{1}{\pi}\int_0^1 \left(\frac{f(y)}{y}\right)'\,F'(x/y)\idiff y.\]
Multiplying both sides by $x$, we reach 
\begin{equation}\label{eqt:mid_step2}
x\left(\frac{(-\Delta)^{-1/2}f(x)}{x}\right)' = \frac{1}{\pi}\int_0^1 y\left(\frac{f(y)}{y}\right)'\,(x/y)F'(x/y)\idiff y.
\end{equation}
We can easily extend this formula to all functions in $\mathbb{V}$ by approximation theory.

Motivated by \eqref{eqt:mid_step2}, we define a linear operator $\mtx{N}$ acting on $L^2([0,1])$ as
\begin{equation}
\mtx{N}(g)(x) = \frac{1}{\pi}\int_0^1 g(y)K(x,y)\idiff y, \quad g\in L^2([0,1]).
\end{equation}
where 
\begin{equation}
K(x,y) = (x/y)\, F'(x/y)\geq 0.
\end{equation}
Clearly, $\mtx{N}$ maps $L^2([0,1])$ into itself. Note that, for any $x,y\geq 0$, it follows from property (2) of Lemma \ref{lem:F_property} that 
\[K(y,x) = (y/x)\, F'(y/x) = (y/x)\,(y/x)^{-2}\, F'(x/y)= (x/y)\, F'(x/y) = K(x,y).\]
That is, $K(x,y)$ is a symmetric kernel, implying that $\mtx{N}$ is self-adjoint with respect to $\la\cdot,\cdot\ra_{L^2}$. Moreover, $\mtx{N}$ is a compact operator, which follows from the fact that $K(\cdot\,,\cdot)\in L^2([0,1]^2)$ and thus $\mtx{N}$ is Hilbert--Schmidt. Nevertheless, we will prove the compactness of $\mtx{N}$ in an alternative way, as it demonstrates the relation between the eigen-pairs of $\mtx{N}$ and $\mtx{M}$.

We first prove an elementary result.

\begin{lemma}\label{lem:g_lemma}
For any $g\in L^2([0,1])$ and any $p>0$, 
\[\lim_{x\rightarrow0+}x^p\int_x^1\frac{g(y)^2}{y^p}\idiff y = 0.\] 
\end{lemma}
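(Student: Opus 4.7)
The plan is to prove this by an $\varepsilon$--$\delta$ splitting argument that exploits the absolute continuity of the Lebesgue integral of $g^2$, plus the trivial bound $x^p/y^p \leq 1$ on the interval $y \in [x,1]$. Write $h(x) := x^p \int_x^1 g(y)^2/y^p \idiff y$ for $x \in (0,1)$.

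Fix an arbitrary $\varepsilon > 0$. Since $g^2 \in L^1([0,1])$, the absolute continuity of the integral gives some $\delta = \delta(\varepsilon) \in (0,1)$ such that $\int_0^\delta g(y)^2 \idiff y < \varepsilon$. For $0 < x < \delta$, split
\[
h(x) = x^p \int_x^\delta \frac{g(y)^2}{y^p}\idiff y + x^p \int_\delta^1 \frac{g(y)^2}{y^p}\idiff y \eqdef A(x) + B(x).
\]
For $A(x)$, note that for $y \in [x,\delta]$ one has $x \leq y$, hence $x^p/y^p \leq 1$, which yields
\[
A(x) \leq \int_x^\delta g(y)^2 \idiff y \leq \int_0^\delta g(y)^2 \idiff y < \varepsilon,
\]
uniformly in $x \in (0,\delta)$. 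For $B(x)$, we have the crude bound $B(x) \leq x^p\, \delta^{-p}\, \|g\|_{L^2([0,1])}^2$, which tends to $0$ as $x \to 0^+$ because $p > 0$.

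Combining these gives $\limsup_{x \to 0^+} h(x) \leq \varepsilon$, and since $\varepsilon > 0$ was arbitrary, the limit is $0$. There is no real obstacle here; the only mild subtlety is to notice that one does not need any integrability of $y^{-p}$ near the origin (indeed, $y^{-p}$ need not even be in $L^1$), because the weight $x^p/y^p$ is bounded by $1$ on the domain of integration.
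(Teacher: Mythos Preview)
Your proof is correct and follows essentially the same approach as the paper: split the integral at a point $\delta$ (the paper calls it $x_1$) chosen so that $\int_0^\delta g^2<\varepsilon$, bound the near-origin piece using $x^p/y^p\leq 1$, and bound the far piece by $x^p\delta^{-p}\|g\|_{L^2}^2\to 0$. The only cosmetic difference is that you phrase the conclusion via $\limsup$, whereas the paper explicitly chooses a threshold $x_2$ making the far piece $\leq \varepsilon$.
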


\begin{proof}
Since $g\in L^2([0,1])$, for any $\epsilon>0$, there exists some $x_1$ such that $\int_0^{x_1}g(y)^2\idiff y\leq\epsilon$. Let $x_2\in (0,x_1)$ be such that 
\[x_2^px_1^{-p}\cdot\|g\|_{L^2([0,1])}^2\leq \epsilon.\]
Then, for any $x\in(0,x_2)$, it holds that
\begin{align*}
x^p\int_x^1\frac{g(y)^2}{y^p}\idiff y &= x^p\int_x^{x_1}\frac{g(y)^2}{y^p}\idiff y + x^p\int_{x_1}^1\frac{g(y)^2}{y^p}\idiff y\\
&\leq  \int_x^{x_1}g(y)^2\idiff y + \frac{x^p}{x_1^p}\int_{x_1}^1g(y)^2\idiff y \\
&\leq 2\epsilon.
\end{align*}
This proves the lemma.
\end{proof}

We can prove the compactness of $\mtx{N}$ by relating it to the compactness of $\mtx{M}$.

\begin{lemma}\label{lem:compact_N}
The linear map $\mtx{N}:L^2([0,1])\to L^2([0,1])$ is compact. In particular, 
\[\|\mtx{N}(g)\|_{\dot{H}^1([0,1])}\leq \|g\|_{L^2([0,1])}, \quad g\in L^2([0,1]).\]
\end{lemma}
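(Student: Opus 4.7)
The plan is to reduce the claim to the $\dot{H}^2$-bound $\|\mtx{M}(f)\|_{\dot{H}^2([-1,1])} \leq \|f\|_{\dot{H}^1}$ from Lemma \ref{lem:compactness}, by exploiting the integral identity \eqref{eqt:mid_step2}. For $g \in C_c^\infty((0,1))$, which is dense in $L^2([0,1])$, I would first define
\[
f(y) := -y \int_y^1 \frac{g(s)}{s}\idiff s, \quad y \in (0,1],
\]
extend $f$ oddly to $[-1,1]$ and by zero outside, and verify that $f \in \mathbb{V}$ and $g(y) = y(f(y)/y)' = f'(y) - f(y)/y$ on $(0,1]$. Formula \eqref{eqt:mid_step2} then yields the compact expression
\[
v(x) := \mtx{N}(g)(x) = x\left(\frac{M(x)}{x}\right)' = M'(x) - \frac{M(x)}{x}, \qquad M := \mtx{M}(f).
\]

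Two integration-by-parts identities drive the estimates. First, for any $u \in H^1([0,1])$ with $u(0) = u(1) = 0$, one has $\|u' - u/y\|_{L^2([0,1])}^2 = \|u'\|_{L^2([0,1])}^2$; applied to $u = f$, this gives $\|g\|_{L^2([0,1])}^2 = \|f'\|_{L^2([0,1])}^2 = \tfrac{1}{2}\|f\|_{\dot{H}^1(\R)}^2$. Second, to bound $\|v\|_{\dot{H}^1([0,1])}$, I would differentiate $v = M' - M/x$ and use $(M/x)' = v/x$ to derive the first-order relation $v' = w - v/x$ with $w := M''$. A further integration by parts then yields the key identity
\[
\|v'\|_{L^2([0,1])}^2 = \|w\|_{L^2([0,1])}^2 - v(1)^2 - 2\int_0^1 \frac{v(x)^2}{x^2}\idiff x \leq \|w\|_{L^2([0,1])}^2.
\]
Combining with $\|w\|_{L^2([0,1])}^2 \leq \tfrac{1}{2}\|f\|_{\dot{H}^1(\R)}^2 = \|g\|_{L^2([0,1])}^2$ (from Lemma \ref{lem:compactness} together with odd symmetry of $M$) gives the claimed $\dot{H}^1$-bound on the dense subclass, and continuity extends it to all of $L^2([0,1])$.

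For compactness, I would observe that the kernel satisfies $K(0, y) = 0$, so $\mtx{N}(g)(0) = 0$ and Poincaré upgrades the $\dot{H}^1$-bound to a full $H^1$-bound $\|\mtx{N}(g)\|_{H^1([0,1])} \lesssim \|g\|_{L^2([0,1])}$. Compactness of $\mtx{N}: L^2([0,1]) \to L^2([0,1])$ then follows from the factorization $\mtx{N}: L^2 \to H^1 \hookrightarrow L^2$, with the last embedding compact by Rellich--Kondrachov. The main obstacle I anticipate is the careful handling of the boundary-term decays $f(y)^2/y \to 0$ at $y = 0$ and $v(x)^2/x \to 0$ at $x = 0$ needed to justify the two integration-by-parts identities; these should follow from Cauchy--Schwarz-type estimates on the defining integral representations combined with the auxiliary Lemma \ref{lem:g_lemma}, which was proved precisely for this purpose.
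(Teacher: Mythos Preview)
Your proposal is correct and follows essentially the same route as the paper: define $f$ from $g$ by the same antiderivative formula, use the integration-by-parts identity $\|g\|_{L^2}^2=\tfrac12\|f\|_{\dot H^1}^2$, express $\mtx{N}(g)=x(\mtx{M}(f)/x)'$ via \eqref{eqt:mid_step2}, and then use a second integration by parts to bound $\|\mtx{N}(g)\|_{\dot H^1}$ by $\|\mtx{M}(f)\|_{\dot H^2}$, concluding with Lemma~\ref{lem:compactness}. The only cosmetic differences are that the paper works directly with $g\in L^2$ (using Lemma~\ref{lem:g_lemma} to justify the boundary-term vanishing $f(x)^2/x\to0$ and $\mtx{N}(g)(x)^2/x\to0$) rather than first treating $g\in C_c^\infty((0,1))$ and extending by density, and that the paper writes the second identity as $\tfrac12\|\mtx{M}(f)\|_{\dot H^2}^2=\|v'\|_{L^2}^2+2\int_0^1 (v/x)^2\,dx+v(1)^2$ rather than your equivalent rearrangement.
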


\begin{proof} Given any $g\in L^2([0,1])$, define a function $f$ as
\begin{equation}\label{eqt:f_from_g}
f(x) = -x\,\int_x^1\frac{g(y)}{y}\idiff y,\ x\in[0,1];\quad f(-x) = -f(x),\ x\in[-1,0);\quad f(x) = 0,\ x\in \R\backslash[-1,1].
\end{equation}
Note that $f$ is odd and supported $[-1,1]$, and $g(x) = x\partial_x(f(x)/x) = f_x(x)-f(x)/x$ for $x\in(0,1)$. Moreover, we have
\begin{align*}
\frac{f(x)^2}{x} &= x\left(\int_x^1\frac{g(y)}{y}\idiff y\right)^2\leq x\left(\int_x^1\frac{1}{y^{3/2}}\idiff y\right)\left(\int_x^1\frac{g(y)^2}{y^{1/2}}\idiff y\right)= 2x^{1/2}\left(\int_x^1\frac{g(y)^2}{y^{1/2}}\idiff y\right).
\end{align*}
It follows from Lemma \ref{lem:g_lemma} that $\lim_{x\rightarrow 0}f(x)^2/x = 0$. Hence, we can use integration by parts to obtain
\begin{equation}\label{eqt:mid_step3}
\|g\|_{L^2([0,1])}^2 = \int_0^1\left(f_x(x)^2 + \left(\frac{f(x)}{x}\right)^2 - 2\frac{f_x(x)f(x)}{x}\right)\idiff x = \int_0^1f_x(x)^2\idiff x = \frac{1}{2}\|f\|_{\dot{H}^1}^2.
\end{equation}
This means that $f\in \dot{H}^1([-1,1])$ and thus $f\in \mathbb{V}$.
By \eqref{eqt:mid_step2}, $\mtx{N}(g)$ can be expressed in terms of $\mtx{M}(f)$ as 
\begin{equation}\label{eqt:mid_step4}
\mtx{N}(g)(x) =  \frac{1}{\pi}\int_0^1 y\left(\frac{f(y)}{y}\right)'\,K(x,y)\idiff y = x\partial_x\left(\frac{(-\Delta)^{-1/2}f(x)}{x}\right) = x\partial_x\left(\frac{\mtx{M}(f)(x)}{x}\right),
\end{equation}
which implies that $\partial_x^2\mtx{M}(f)(x) = \partial_x\mtx{N}(g)(x) + \mtx{N}(g)(x)/x$ for $x\in(0,1)$. Also, it is easy to check that $\lim_{x\rightarrow 0}\mtx{N}(g)(x)^2/x = 0$ since $\mtx{M}(f)\in \dot{H}^2([-1,1])$ (see Theorem \ref{lem:compactness}). Hence, we can again use integration by parts to obtain 
\begin{align*}
\frac{1}{2}\|\mtx{M}(f)\|_{\dot{H}^2([-1,1])}^2 &= \int_0^1\left((\partial_x\mtx{N}(g)(x))^2 + \left(\frac{\mtx{N}(g)(x)}{x}\right)^2 + 2\frac{\mtx{N}(g)(x)\partial_x\mtx{N}(g)(x)}{x}\right)\idiff x\\
&=\int_0^1\left((\partial_x\mtx{N}(g)(x))^2 + 2\left(\frac{\mtx{N}(g)(x)}{x}\right)^2\right)\idiff x + (\mtx{N}(g)(1))^2\\
&\geq \|\mtx{N}(g)\|_{\dot{H}^1([0,1])}^2.
\end{align*}
Combining the estimates above and using Theorem \ref{lem:compactness} yields 
\[\|\mtx{N}(g)\|_{\dot{H}^1([0,1])}^2\leq \frac{1}{2}\|\mtx{M}(f)\|_{\dot{H}^2([-1,1])}^2 \leq \frac{1}{2}\|f\|_{\dot{H}^1}^2 = \|g\|_{L^2([0,1])}^2,\]
which is the desired result.
\end{proof}

Now that we know $\mtx{N}$ is self-adjoint and compact, we may consider its eigenvalue problem
\begin{equation}\label{eqt:eigen_problem_2}
\lambda g = \mtx{N}(g),\quad \lambda\in \R,\ g\in L^2([0,1]),
\end{equation}
which admits countably infinite eigenpairs. 

Let $(\lambda,f)$ be an eigen-pair of the eigenvalue problem \eqref{eqt:eigen_problem}, and let $g = x\partial_x(f/x) = f_x-f/x$, $x\in[0,1]$. Since $f\in H^1(\R)$, it is easy to check that $\lim_{x\rightarrow 0}f(x)^2/x = 0$, and thus it follows from \eqref{eqt:mid_step3} that $\|g\|_{L^2([0,1])} = \|f\|_{\dot{H}^1}/\sqrt{2}<+\infty$. Moreover, by the calculation \eqref{eqt:mid_step4}, 
\[\mtx{N}(g)(x) = x\partial_x\left(\frac{\mtx{M}(f)(x)}{x}\right) = \lambda\, x\partial_x\left(\frac{f(x)}{x}\right) = \lambda g(x),\]
meaning that $(\lambda,g)$ is an eigen-pair of eigenvalue problem \eqref{eqt:eigen_problem_2}. 

Conversely, suppose $(\lambda, g)$ solves the eigenvalue problem \eqref{eqt:eigen_problem_2}, and define $f$ as in \eqref{eqt:f_from_g}. From the proof of Lemma \ref{lem:compact_N}, we can find that $f\in \mathbb{V}$, and $(\lambda,f)$ is an eigen-pair of \eqref{eqt:eigen_problem}.

Therefore, there is a one-to-one correspondence between the eigen-pairs of $\mtx{M}$ and those of $\mtx{N}$, which share the same eigenvalues. In particular, if $f_*$ is the leading eigenfunction of $\mtx{M}$ corresponding to the largest eigenvalue $\lambda_*$, then $g_* = x\partial_x(f_*(x)/x)$ must be the leading eigenfunction of $\mtx{N}$ that solves the variational problem associated with \eqref{eqt:eigen_problem_2}:
\begin{equation}\label{eqt:variational_form_P}
\sup_{g\in L^2([0,1])\backslash\{0\}} \frac{\langle g, \mtx{N}(g)\rangle_{L^2([0,1])}}{\|g\|_{L^2([0,1])}^2}.
\end{equation}
Moreover, since the kernel $K(x,y)$ is non-negative, we have
\[\langle g_*, \mtx{N}(g_*)\rangle_{L^2([0,1])} = \frac{1}{\pi}\int_0^1\int_0^1g_*(x)g_*(y)K(x,y)\idiff x\idiff y\leq \frac{1}{\pi}\int_0^1\int_0^1|g_*(x)||g_*(y)|K(x,y)\idiff x\idiff y.\]
By the optimality of $g_*$, $g_*(x)$ cannot change sign on $(0,1)$. Without loss of generality, suppose that $g_* = |g_*|$. Since $K(x,y)$ is strictly positive for $x,y>0$, we have 
\[\lambda_* g_*(x) = \mtx{N}(g_*)(x) =  \frac{1}{\pi}\int_0^1 |g(y)| K(x,y)\idiff y >0,\quad \forall x\in(0,1].\]
Hence, $g_*(x)$ is strictly positive on $(0,1]$ up to a multiplicative constant. This implies that $f_*/x$ is strictly monotone on $(0,1]$. Since $f_*(1)=0$, this again proves that $f_*$ is strictly negative on $(0,1)$ up to a multiplicative constant.

\section{The general class}\label{sec:general_class}
This section aims at proving the second part of Theorem \ref{thm:main_theorem}: The existence of uncountably infinite solutions in the general class \eqref{general_claiss_formal}. As a recap, our goal is to construct more complicated solutions to equation \eqref{eqt:main_equation} in $\mathbb{V}$ that satisfy the relation
\begin{equation}\label{eqt:piecewise_eigen}
\mu_i\, \om(x) = \mtx{M}(\om)(x), \quad x\in [-x_i,-x_{i-1}]\cup[x_{i-1},x_i],\quad i=1,\dots,n,
\end{equation}
for some integer $n\geq 2$, some distinct points $0=x_0<x_1<\dots<x_{n-1}<x_n=1$, and some sequence $\{\mu_i\}_{i=1}^n$ of positive numbers such that $\mu_i\neq \mu_{i-1}$. Note that, since $\om,\mtx{M}(\om)\in H^1(\R)$, each $x_i$ must be a zero of $\om$, i.e. $\om(\pm x_i)=0, i=1,\dots,n$. We will also prove the desired regularity of these solutions.

\subsection{A nonlinear eigenvalue problem}
Denote
\begin{equation}\label{eqn: def of I_i}
I_i := [-x_i, -x_{i-1}]\cup [x_{i-1},x_i].
\end{equation}
The above equation \eqref{eqt:piecewise_eigen} can be equivalently stated as follows: there exists $\lam>0$ and $\{\d_i\}_{i=1}^n$ satisfying $\sum_{i = 1}^n \d_i = 0$, such that
\[
\suml_{i = 1}^n \chi_{I_i}(1+\d_i)\mtx{M}(\om) = \lam\om.
\]
This gives $\mu_i = \lambda/(1+\delta_i)$. Therefore, our task is to solve the nonlinear eigenvalue problem
\begin{equation}\label{eqn: nonlinear eigenvalue problem}
\mtx{M}(f) + \suml_{i = 1}^n \chi_{I_i} \d_i \mtx{M}(f) = \lam f,
\end{equation}
where $\{I_i\}_{i=1}^n$ are intervals defined by some of the zeros of $f$.

The idea of constructing such a solution is to perturb a sign-changing eigenfunction of $\mtx{M}$.
Take an eigenpair $(\lam_0,f_0)$ of $\mtx{M}$, such that $\lam_0 >0$, $\|f_0\|_{\dot{H}^1([-1,1])} = 1$, $f_0$ is sign-changing on $[0,1]$, and $f_0$ has at least $(n+1)$ distinct zeros on $[0,1]$.
According to our analysis in Section \ref{sec:basic_class}, such an eigen-pair must exist, and $\lam_0$ is a simple eigenvalue. Moreover, the number of zeros of $f_0$ can be arbitrarily large (see Corollary \ref{cor:sign-changing}).
Since the equation \eqref{eqt:piecewise_eigen} is formulated on $n$ different pieces, for simplicity, we shall assume that $f_0$ has exactly $(n+1)$ zeros on $[0,1]$, namely, $0=x_0<x_1<\cdots<x_n=1$.
Let $x_{-i} := -x_i$ $(i = 1,2,\cdots, n)$ denote its zeros on $[-1,0)$.
It will be clear that the general case of having more zeros than what we need can be discussed in a similar way.\\

We first prove that, if small $H^2$-perturbations are added to $f_0$, the number of zeros of the perturbed function should be the same as that of $f_0$, and the positions of these zeros are stable under the perturbation.

\begin{lemma}
\label{lem: stability of zeros}
Let $(\lam_0, f_0)$ be defined as above.
Denote $D_{r}:= \{h\in \mathbb{V}:\,\|h\|_{\dot{H}^2([-1,1])} \leq r\}$.
Then there exist $\epsilon >0$ and $r_0 \in (0,1]$ only depending on $f_0$, such that the followings hold:
\begin{enumerate}
  \item The open intervals $\{(x_i-\epsilon, x_i+\epsilon)\}_{i = -n}^n$ are non-overlapping;
  \item For any $h\in D_{r_0}$, $(\lam_0 f_0+h)$ has the same number of zeros on $[-1,1]$ as $\lam_0 f_0$, and all of them are simple zeros;
  \item If we denote the zeros of $(\lam_0 f_0+h)$ as $-1 = \tilde{x}_{-n}<\cdots <\tilde{x}_0 = 0 < \tilde{x}_1 <\cdots < \tilde{x}_n = 1$, then $\tilde{x}_i \in [-1,1]\cap (x_i-\epsilon, x_i+\epsilon)$ for all $i\in \{-n,\cdots, n\}$.
\end{enumerate}
Furthermore, for arbitrary $h_1,h_2\in D_{r_0}$, $(\lam_0 f_0+h_j)$, $j = 1,2$, both have exactly $(2n+1)$ zeros on $[-1,1]$, which we denote to be $\tilde{x}_{-n}^{(j)},\cdots, \tilde{x}_n^{(j)}$.
Then
\begin{equation*}
\big|\tilde{x}_i^{(1)}-\tilde{x}_i^{(2)}\big| \leq C\|h_1-h_2\|_{\dot{H}^1([-1,1])},\quad i \in \{-n,\cdots,n\},
\end{equation*}
where $C$ only depends on $f_0$.
\end{lemma}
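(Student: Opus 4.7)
The plan is to combine the strict non-degeneracy of each zero of $f_0$ (Theorem \ref{thm:non-degeneracy}) with the embedding $\dot{H}^2 \hookrightarrow C^1$ available on $\mathbb{V}$, and to deduce the Lipschitz bound from a direct application of the mean value theorem.

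First I would fix the two scales $\epsilon$ and $r_0$. By Theorem \ref{thm:non-degeneracy}, each zero $x_i$ of $f_0$ in $[-1,1]$ satisfies $(\lam_0 \partial_x f_0(x_i))^2 \geq c(f_0)^2 + \lam_0^3\|f_0\|_{\dot{H}^1}^2/(4\pi) =: \delta_0^2 > 0$, and $\partial_x f_0$ is continuous on $[-1,1]$ since $f_0\in H^2([-1,1])\cap C^\infty((-1,1))$. Hence one can take $\epsilon>0$ so small that $|\lam_0 \partial_x f_0(x)|\geq \delta_0/2$ on each closed neighborhood $[x_i-\epsilon,x_i+\epsilon]\cap[-1,1]$ and the open neighborhoods $(x_i-\epsilon, x_i+\epsilon)$ are pairwise disjoint, which is assertion (1). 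For $h\in\mathbb{V}$, Rolle's theorem applied to $h$ (which vanishes at $\pm 1$) gives $h'(c)=0$ for some $c\in(-1,1)$, hence $\|h'\|_{L^\infty}\leq \sqrt{2}\|h\|_{\dot{H}^2([-1,1])}$ and then $\|h\|_{L^\infty}\leq 2\sqrt{2}\|h\|_{\dot{H}^2([-1,1])}$. Setting $m_0 := \min\{|\lam_0 f_0(x)| : x\in[-1,1]\setminus\bigcup_i(x_i-\epsilon,x_i+\epsilon)\}>0$, I would choose $r_0\in(0,1]$ small enough that $\sqrt{2}r_0\leq \delta_0/4$ and $2\sqrt{2}r_0<\min\{m_0,\delta_0\epsilon/2\}$.

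For any $h\in D_{r_0}$, the derivative estimate forces $|(\lam_0 f_0+h)'|\geq \delta_0/4$ on each $[x_i-\epsilon,x_i+\epsilon]\cap[-1,1]$, so $\lam_0 f_0+h$ is strictly monotone there and contributes at most one zero; on the complement $|\lam_0 f_0+h|\geq m_0-\|h\|_{L^\infty}>0$, so no other zeros exist. The boundary zeros $\tilde x_0=0$ and $\tilde x_{\pm n}=\pm 1$ are present automatically because $h$ is odd and supported in $[-1,1]$. For each interior $x_i$, the intermediate value theorem applied across $[x_i-\epsilon, x_i+\epsilon]$ (where $|\lam_0 f_0|\geq \delta_0\epsilon/2$ at the two endpoints with opposite signs, while $\|h\|_{L^\infty}<\delta_0\epsilon/2$) produces exactly one $\tilde x_i$ in that interval, and the nonvanishing derivative makes this zero simple. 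This yields (2) and (3).

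For the Lipschitz estimate, substituting $(\lam_0 f_0+h_j)(\tilde x_i^{(j)})=0$, subtracting, and applying the mean value theorem to $f_0$ and to $h_1$ produces
\[ \bigl(\lam_0\partial_x f_0(\xi)+\partial_x h_1(\eta)\bigr)\bigl(\tilde x_i^{(1)}-\tilde x_i^{(2)}\bigr) = (h_2-h_1)(\tilde x_i^{(2)}), \]
for some $\xi,\eta$ in the common neighborhood. The coefficient has absolute value at least $\delta_0/4$, and the pointwise bound $|(h_1-h_2)(x)| = \bigl|\int_{-1}^x(h_1-h_2)'\bigr|\leq \sqrt{2}\|h_1-h_2\|_{\dot{H}^1([-1,1])}$ then gives the claimed estimate with $C=4\sqrt{2}/\delta_0$; for $i\in\{0,\pm n\}$ both sides vanish identically. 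The main technical obstacle is the balanced calibration of $\epsilon$ and $r_0$ so that the derivative bound on each neighborhood and the pointwise lower bound on the complement hold simultaneously; once these scales are fixed, the remaining counting and Lipschitz arguments reduce to repeated application of the intermediate and mean value theorems.
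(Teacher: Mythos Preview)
Your proof is correct and follows essentially the same approach as the paper: both invoke Theorem \ref{thm:non-degeneracy} for the uniform lower bound on $|\lam_0\partial_x f_0|$ at the zeros, use the embedding $\dot H^2\hookrightarrow C^1$ to control $h$, separate $[-1,1]$ into $\epsilon$-neighborhoods of the zeros (where monotonicity forces a unique simple zero) and a complement (where $|\lam_0 f_0|$ is bounded below), and then read off the Lipschitz bound from the derivative lower bound together with $\|h_1-h_2\|_{L^\infty}\lesssim\|h_1-h_2\|_{\dot H^1}$. The only cosmetic difference is that the paper fixes $\epsilon$ quantitatively via the $C^{1,1/2}$ seminorm of $\lam_0 f_0$, whereas you invoke continuity of $\partial_x f_0$ directly; both are fine.
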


\begin{proof}
It is known that
\begin{equation}\label{eqt: C-one-half estimate}
\|\lam_0 f_0\|_{\dot{C}^{1,1/2}([-1,1])} \leq C\|\mtx{M}(f_0)\|_{\dot{H}^2([-1,1])}\leq C\|f_0\|_{\dot{H}^1([-1,1])}=: C_0.
\end{equation}
On the other hand, by Theorem \ref{thm:non-degeneracy}, at every zero $\hat{x}$ of $f_0$ in $[-1,1]$,
\begin{equation}\label{eqt: lower bound for the derivative of lam_0f_0}
\lam_0 |\pa_x f_0(\hat{x})| \geq \frac{\lam_0^{3/2}}{2\pi^{1/2}}\|f_0\|_{\dot{H}^1} =: c_0.
\end{equation}
We also have
\[
\|h\|_{C^1([-1,1])}\leq C\|h\|_{\dot{H}^2([-1,1])}\leq Cr_0
\]
for any $h\in D_{r_0}$, where $C$ is a universal constant.

With $\epsilon = c_0^2/(16C_0^2)$, we define
\[
I := [-1,1]\cap\left(\cup_{i = -n}^n (x_i-\epsilon, x_i+\epsilon)\right).
\]
Clearly, $\inf_{[-1,1]\backslash I} |\lam_0 f_0|>0$, and it follows from \eqref{eqt: C-one-half estimate} and \eqref{eqt: lower bound for the derivative of lam_0f_0} that
$|\lam_0 \pa_x f_0|\geq c_0-C_0 \epsilon^{1/2} = \frac{3}{4} c_0$ on $I$. Then we may take $r_0$ to be suitably small, so that $\|\pa_x h\|_{L^\infty([-1,1])}\leq \frac{1}{4}c_0$ and  $\|h\|_{L^\infty([-1,1])}\leq \frac{1}{2}\inf_{[-1,1]\backslash I} |\lam_0 f_0|$.
Under these assumptions, we find that
\begin{enumerate}
  \item[(i)]\label{property: sign does not change on the other part of the interval} $(\lam_0 f_0 +h)$ has no zero in $[-1,1]\backslash I$, as it always has the same sign as $\lam_0 f_0$ there.
  \item[(ii)] On each sub-interval $[-1,1]\cap(x_i-\epsilon, x_i+\epsilon)$, $(\lam_0 f_0+h)$ is strictly monotone because $\pa_x (\lam_0 f_0+h)$ has the same sign as $\lam_0 \pa_x f_0(x_i)$.
      More precisely, $|\pa_x(\lam_0 f_0+h)|\geq \frac{1}{2} c_0$ on all these sub-intervals.
  \item[(iii)]\label{property: non-overlapping} These sub-intervals $(x_i-\epsilon,x_i+\epsilon)$, $i =  -n,\cdots, n$, are non-overlapping.
\end{enumerate}
These facts altogether imply that,
for arbitrary $h\in D_{r_0}$, $(\lam_0 f_0+h)$ has the same number of zeros on $[-1,1]$ as $\lam_0 f_0$, and $|\pa_x (\lam_0 f_0+h)(x)|\geq c_0/2$ in $I$. Moreover, $(\lam_0 f_0+h)$ has exactly one zero in each $(x_i-\epsilon,x_i+\epsilon)$.

Now take arbitrary $h_1,h_2\in D_{r_0}$.
We have shown that both $(\lam_0 f_0 + h_j)$, $j =1,2$, have exactly $(2n+1)$ zeros in $[-1,1]$, denoted by $\tilde{x}_{-n}^{(j)},\cdots, \tilde{x}_n^{(j)}$, and that $\tilde{x}_i^{(j)}$ locates in $[-1,1]\cap(x_i-\epsilon, x_i+\epsilon)$.
Since $|\pa_x(\lam_0 f_0+h_j)|\geq \frac{1}{2} c_0$ on each sub-interval $[-1,1]\cap(x_i-\epsilon, x_i+\epsilon)$, we derive that
\[
\big|\tilde{x}_i^{(1)}-\tilde{x}_i^{(2)}\big| \leq \frac{2}{c_0} \big\|(\lam_0f_0+h_1)-(\lam_0f_0+h_2)\big\|_{L^\infty} \leq C\|h_1-h_2\|_{\dot{H}^1([-1,1])},
\]
where $C$ depends on $f_0$.
\end{proof}

We can now establish the existence of uncountably infinite solutions in the general class \eqref{general_claiss_formal} via a perturbation argument. The claimed regularity properties of the general class will be verified in the next subsection.

\begin{theorem}
\label{thm:solution_general_class}
Let $(\lam_0,f_0)$ be defined as above.
Let $\d = (\d_1,\cdots, \d_n)$ denote an $n$-tuple such that $\sum_{i = 1}^n \d_i = 0$.
Define $|\delta|:=\max\{|\d_1|,\cdots, |\d_n|\}$.
There exists $r\in (0,1]$ and $\d_*>0$ depending on $\mtx{M}$ and $f_0$, such that the followings hold.
There exists a continuous family of functions $\{f_\d\}_{\{|\d|<\d_*\}} \subset \mathbb{V}$, satisfying that:
\begin{enumerate}
\item For each $\d$ with $|\d|<\d_*$, $f_\d$ is the unique solution to the nonlinear eigenvalue problem \eqref{eqn: nonlinear eigenvalue problem} such that $\langle f_\d - f_0,f_0\rangle_{\dot{H}^1} = 0$ and $\|f_\d - f_0\|_{\dot{H}^1}\leq r$.

\item $f_\d$ has exactly $(2n+1)$ distinct zeros on $[-1,1]$, which defines $\{I_i\}_{i=1}^n$ in \eqref{eqn: nonlinear eigenvalue problem} according to \eqref{eqn: def of I_i}, and
\begin{equation*}
\lam = \lam_0 + \left\langle \sum_{i = 1}^n \chi_{I_i} \d_i \mtx{M}(f_\d),\, f_0 \right\rangle_{\dot{H}^1}.
\end{equation*}

\item In addition, $f_\d$ has Lipschitz dependence on $\d$ in the $L^2([-1,1])$-topology, with $f_{(0,\cdots,0)} = f_0$.
To be precise, for any two $n$-tuples $\d,\d'$ such that $|\d|, |\d'|<\d_*$,
\[
\|f_\d - f_{\d'}\|_{L^2([-1,1])} \leq C\max_{i\in \{1,\cdots,n\}}|\d_i-\d_i'|,
\]
where $C$ depends on $\mtx{M}$ and $f_0$.
\item $c(f_\delta)\neq 0$ (see the definition of $c(f)$ in \eqref{eqt:cf}).
\end{enumerate}
As a corollary, the self-similar profile equation \eqref{eqt:main_equation} has uncountably infinite solutions in $\mathbb{V}$ that are not eigenfunctions of $\mtx{M}$.
\end{theorem}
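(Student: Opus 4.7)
The natural strategy is a Banach fixed point argument perturbing off the eigenpair $(\lam_0, f_0)$ of $\mtx{M}$. Writing $f = f_0 + g$ with the normalization $\langle g, f_0 \rangle_{\dot H^1} = 0$ and $\lam = \lam_0 + \mu$, I would first take the $\dot H^1$-inner product of $\mtx{M}(f) + \sum_i \chi_{I_i(f)} \d_i \mtx{M}(f) = \lam f$ against $f_0$. Self-adjointness of $\mtx{M}$ together with $\mtx{M} f_0 = \lam_0 f_0$ kills all linear contributions and isolates
\[\mu = \left\langle \sum_{i=1}^n \chi_{I_i(g)} \d_i \mtx{M}(f_0+g),\, f_0 \right\rangle_{\dot H^1},\]
which is exactly the formula for $\lam$ in statement (2).

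Projecting the remainder of the equation onto $V_0 := f_0^\perp$ and using the fact (proved in Section~\ref{sec:basic_class}) that $\lam_0$ is a simple isolated eigenvalue of the compact self-adjoint operator $\mtx{M}$, so that $A := (\mtx{M}-\lam_0\,\mathrm{Id})|_{V_0}$ is boundedly invertible, recasts the problem as a fixed-point equation $g = \Phi_\d(g)$ with
\[\Phi_\d(g) := A^{-1}P\!\left[\mu(g,\d)\, g - \sum_{i=1}^n \chi_{I_i(g)} \d_i \mtx{M}(f_0+g)\right],\]
where $P$ is the $\dot H^1$-orthogonal projection onto $V_0$ and $I_i(g)$ is built from the zeros of $f_0+g$. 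Those zeros exist and depend Lipschitz-continuously on $g$ by Lemma~\ref{lem: stability of zeros}. For $|\d|$ sufficiently small I would then verify that $\Phi_\d$ maps a small closed ball of $V_0$ into itself and is a strict contraction on it, using (i) boundedness of $\mtx{M}:\dot H^1\to \dot H^2$ (Lemma~\ref{lem:compactness}), (ii) boundedness of $A^{-1}$ on $V_0$, and (iii) the Lipschitz bound on $\tilde x_i(g)$ in Lemma~\ref{lem: stability of zeros}, which controls the $L^2$-size of $\chi_{I_i(g_1)} - \chi_{I_i(g_2)}$ by $\|g_1-g_2\|_{\dot H^1}$. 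The fixed point $g_\d$ yields $f_\d = f_0 + g_\d$ satisfying the uniqueness and identities in (1)--(2); the $L^2$-Lipschitz dependence in (3) follows by comparing the fixed-point equations at two tuples $\d,\d'$ and estimating each term separately; and (4) is immediate from continuity of the bounded linear functional $c$ on $\mathbb{V}$ together with $c(f_0)\neq 0$ from Theorem~\ref{thm:cf_nonzero}.

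The main obstacle I foresee is the loss of regularity caused by the characteristic functions. For a generic $g$, $\mtx{M}(f_0+g)$ does not vanish at the endpoints $\tilde x_i(g)$, so $\chi_{I_i(g)}\mtx{M}(f_0+g)$ has jumps there and the raw output of $\Phi_\d$ is not automatically in $\dot H^1$. The cleanest way around this is to run the contraction in a weaker norm (for instance $L^2$ or $\dot H^1([-1,1])$ directly), exploiting that $\mtx{M}$ and $A^{-1}$ each gain one derivative, and then upgrade to the $\dot H^1$-uniqueness statement a posteriori. At the genuine fixed point the pathology disappears by itself, because the equation forces $\mtx{M}(f_\d) = \mu_i f_\d$ on $I_i$, so $\mtx{M}(f_\d)$ vanishes at the endpoints $\tilde x_i$ just as $f_\d$ does; iterating the smoothing $\mtx{M}: H^k(I_i)\to H^{k+1}(I_i)$ as in the proof of Theorem~\ref{thm:eigenvalue_problem} then delivers the claimed piecewise $H^2$ and interior $C^\infty$ regularity.
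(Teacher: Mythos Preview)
Your overall strategy---Lyapunov--Schmidt reduction onto $f_0^\perp$ followed by a contraction mapping argument, with $\lam$ determined by the orthogonality condition---is exactly what the paper does. However, there is one technical but decisive discrepancy that is in fact the common source of both difficulties you flag.

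You define $I_i(g)$ by the zeros of $f_0+g$ and then invoke Lemma~\ref{lem: stability of zeros}. But that lemma requires the perturbation to lie in the $\dot H^2$-ball $D_{r_0}$, whereas your $g$ lives only in an $\dot H^1$-ball; with merely $C^{1/2}$-control on $g$ you cannot guarantee that $f_0+g$ has a \emph{unique} zero in each subinterval, nor obtain the Lipschitz dependence of those zeros needed for the contraction estimate. The paper's fix is to define $I_i(g)$ by the zeros of $\mtx{M}(f_0+g) = \lam_0 f_0 + \mtx{M}(g)$ instead. Since $\|\mtx{M}(g)\|_{\dot H^2} \leq \|g\|_{\dot H^1}$ by Lemma~\ref{lem:compactness}, Lemma~\ref{lem: stability of zeros} now applies directly. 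This single change also dissolves the regularity obstacle you worry about: with this choice $\mtx{M}(f_0+g)$ vanishes at every endpoint $\tilde x_i$, so $\mtx{R}(g) = \sum_i \chi_{I_i(g)}\d_i \mtx{M}(f_0+g)$ is genuinely in $H^1_0([-1,1])$ throughout the iteration, and the map carries the $\dot H^1$-ball $B_r$ into itself with no loss. At the fixed point the two definitions of $I_i$ coincide, since $\mtx{M}(f_\d)$ is a piecewise scalar multiple of $f_\d$.

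A minor correction: $A^{-1} = (\mtx{M}-\lam_0 I)^{-1}|_{V_0^\perp}$ does not gain a derivative. The paper does run the contraction in $L^2$ as you propose, but not by a smoothing mechanism; rather, $(\mtx{M}-\lam_0 I)^{-1}$ extends to a bounded operator on the $L^2$-closure of $V_0^\perp$ via the Fredholm alternative, while the $\dot H^1$-ball $B_r$ is complete in the $L^2$-metric. The self-mapping is checked in $\dot H^1$ and the contraction in $L^2$.
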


\begin{proof}
Denote $\mathbb{V}_0 = \mathrm{span}\{f_0\}$, which is the eigenspace of $\mtx{M}$  corresponding to the simple eigenvalue $\lam_0$.
Let $ \mathbb{V}_0^\perp$ be its orthogonal complement in $\mathbb{V}$.
Denote the orthogonal projection from $\mathbb{V}$ to $\mathbb{V}_0^\perp$ to be $\mtx{P}$.

Fix an $n$-tuple $\d = (\d_1,\d_2,\cdots, \d_n)$ such that $\sum_{i = 1}^n \d_i = 0$ and $|\d_i| \ll 1$, where the smallness condition will be clear later.
Assume that a solution of \eqref{eqn: nonlinear eigenvalue problem} can be written as $f = f_0+g$ for some $g\in \mathbb{V}_0^\perp$.
Then \eqref{eqn: nonlinear eigenvalue problem} becomes
\begin{equation}\label{eqn: nonlinear eigenvalue problem recast}
(\mtx{M}-\lam_0 I)g = (\lam-\lam_0) (f_0+g) - \suml_{i = 1}^n \chi_{I_i} \d_i \mtx{M}(f_0+g).
\end{equation}
This is solvable if and only if the right-hand side is orthogonal to $\mathbb{V}_0$.
Indeed, since $\mtx{M}$ is compact, $(\mtx{M}-\lam_0 I)^{-1}$ is well-defined as a bounded operator mapping $\mathbb{V}_0^\perp$ into itself.
This leads to an equation for $\lam$,
\[
\left\langle (\lam-\lam_0) (f_0+g) - \suml_{i = 1}^n \chi_{I_i} \d_i \mtx{M}(f_0+g),\,
f_0 \right\rangle_{\dot{H}^1} = 0,
\]
which gives
\begin{equation}\label{eqn: formula for lambda}
\lam = \lam_0 + \left\langle \sum_{i = 1}^n \chi_{I_i} \d_i \mtx{M}(f_0+g),\, f_0 \right\rangle_{\dot{H}^1}.
\end{equation}
Plugging this into \eqref{eqn: nonlinear eigenvalue problem recast} yields
\begin{equation}
(\mtx{M}-\lam_0 I)g =
\left\langle \sum_{i = 1}^n \chi_{I_i} \d_i \mtx{M}(f_0+g),\, f_0 \right\rangle_{\dot{H}^1} g - \mtx{P}\left(\sum_{i = 1}^n \chi_{I_i} \d_i \mtx{M}(f_0+g)\right).
\label{eqn: nonlinear eigenvalue problem recast 2}
\end{equation}

Let $B_r:= \{g\in \mathbb{V}_0^\perp :\,\|g\|_{\dot{H}^1([-1,1])} \leq r\}$ with $r\in (0,1]$ to be determined according to $f_0$.
For this moment, we first require $r\leq r_0$ so that $\mtx{M}(B_r)\subset D_{r_0}$ (see Lemma \ref{lem:compactness}), where $r_0$ and $D_{r_0}$ are defined in Lemma \ref{lem: stability of zeros}.
In what follows, we will understand $B_r$ as a non-empty complete subset of $L^2([-1,1])$.
In view of \eqref{eqn: nonlinear eigenvalue problem recast 2}, consider the following mapping defined on $B_r$,
\[
\mtx{K}:\, g\mapsto
\big(\mtx{M}-\lam_0 I \big)^{-1}
\big[\langle \mtx{R}(g),\, f_0 \rangle_{\dot{H}^1}\,g
- \mtx{P}\big(\mtx{R}(g) \big)\big],
\]
where the mapping $\mtx{R}$ is defined by
\begin{equation}\label{eqn: operator R}
\mtx{R}(g) := \suml_{i = 1}^n \chi_{I_i(g)} \d_i \mtx{M}(f_0+g).
\end{equation}
Here $I_i(g)$ is determined by the zeros of $\mtx{M}(f_0+g)$ as in \eqref{eqn: def of I_i}.
This is well-defined thanks to Lemma \ref{lem: stability of zeros}.
Indeed, since we assumed $\mtx{M}(g)\in D_{r_0}$, by Lemma \ref{lem: stability of zeros}, $\mtx{M}(f_0 +g)$ is an odd function having exactly $(2n+1)$ zeros on $[-1,1]$.
We denote them to be $\tilde{x}_{-n} ,\cdots, \tilde{x}_n $, and we know that $\tilde{x}_i\in [-1,1]\cap(x_i-\epsilon, x_i+\epsilon)$.
Then we can define $I_i(g) : = [-\tilde{x}_i, -\tilde{x}_{i-1}]\cup [\tilde{x}_{i-1},\tilde{x}_i]$.

For the operator $\mtx{R}$, we derive that
\begin{align*}
\| \mtx{R}(g)\|_{\dot{H}^1([-1,1])}
&= \left(\suml_{i = 1}^n \d_i^2 \big\| \mtx{M}(f_0+g)\big\|_{\dot{H}^1(I_i(g))}^2\right)^{1/2}\\
&\leq \max\{|\d_1|,\cdots, |\d_n|\} \big\| \mtx{M}(f_0+g)\big\|_{\dot{H}^1}
\leq  C|\d|,
\end{align*}
where $C$ only depends on $\mtx{M}$.
This further implies that 
\[
\|\mtx{K}(g)\|_{\dot{H}^1([-1,1])} \leq C|\d|.
\]
Hence, assuming $|\d|$ to be small, we can make $\mtx{K}$ map $B_r$ into itself.

Next we show that $\mtx{K}$ is a contraction mapping on $B_r$ in the $L^2([-1,1])$-topology.
We shall use an equivalent representation of $\mtx{K}$:
\[
\mtx{K}(g) = 
\big(\mtx{M}-\lam_0 I \big)^{-1}
\big[\langle \mtx{R}(g),\, f_0 \rangle_{\dot{H}^1} (f_0+ g)
- \mtx{R}(g)\big],
\]
Let $\mathbb{X}$ be the completion of $\mathbb{V}_0^\perp$ in the $L^2([-1,1])$-topology.
Thanks to Lemma \ref{lem:compactness}, $\mtx{M}$ can be extended to $\mathbb{X}$ as a continuous operator, still denoted by $\mtx{M}$.
Also by Lemma \ref{lem:compactness}, it is a compact operator on $\mathbb{X}$, with $\mtx{M}(\mathbb{X})\subset \mathbb{V}_0^\perp$.
Hence, its eigenfunctions in $\mathbb{X}$ are exactly those in $\mathbb{V}_0^\perp$, so $(\mtx{M}-\lam_0 I)$ does not have zero eigenvalue in $\mathbb{X}$.
By the Fredholm alternative and the open mapping theorem, $(\mtx{M}-\lam_0 I)^{-1}$ is a bounded operator in $\mathbb{X}$.
Therefore, for arbitrary $g_1,g_2\in B_r$,
\begin{equation}
\begin{split}
& \big\|\mtx{K}(g_1)-\mtx{K}(g_2)\big\|_{L^2([-1,1])}\\
&\leq C\big\|\langle \mtx{R}(g_1),\, f_0 \rangle_{\dot{H}^1} (f_0+ g_1)
-\langle \mtx{R}(g_2),\, f_0 \rangle_{\dot{H}^1} (f_0+g_2)
- \mtx{R}(g_1) + \mtx{R}(g_2) \big\|_{L^2([-1,1])}\\
&\leq C\big\|\langle \mtx{R}(g_1) - \mtx{R}(g_2),\, f_0 \rangle_{\dot{H}^1} (f_0+ g_1)
\big\|_{L^2([-1,1])}\\
&\quad + C\big\|\langle \mtx{R}(g_2),\, f_0 \rangle_{\dot{H}^1} (g_1-g_2)\big\|_{L^2([-1,1])}
+ C\big\|\mtx{R}(g_1)-\mtx{R}(g_2)\big\|_{L^2([-1,1])}\\
&= C\big|\langle \mtx{R}(g_1) - \mtx{R}(g_2),\, f_0 \rangle_{\dot{H}^1} \big| \|f_0+ g_1\|_{L^2([-1,1])}\\
&\quad + C\big|\langle \mtx{R}(g_2),\, f_0 \rangle_{\dot{H}^1}\big|\|g_1-g_2\|_{L^2([-1,1])}
+ C\big\|\mtx{R}(g_1)-\mtx{R}(g_2)\big\|_{L^2([-1,1])},
\end{split}
\label{eqn: difference of K(g_1) and K(g_2) in L^2}
\end{equation}
where $C$ depends on $\mtx{M}$ and $\lam_0$ (and thus essentially on $f_0$).
Since $\mtx{R}(g_j)\in H_0^1([-1,1])$, $j = 1,2$, we calculate by integration by parts that
\begin{align*}
\big|\langle \mtx{R}(g_1) - \mtx{R}(g_2),\, f_0 \rangle_{\dot{H}^1}\big|
&= \left|\int_{-1}^1 \big(\mtx{R}(g_1) - \mtx{R}(g_2)\big) \pa_x^2 f_0\,dx\right|\\
&\leq \|\mtx{R}(g_1) - \mtx{R}(g_2)\|_{L^2([-1,1])} \|f_0\|_{\dot{H}^2([-1,1])}\\
&\leq \lam_0^{-1}\|\mtx{R}(g_1) - \mtx{R}(g_2)\|_{L^2([-1,1])}.
\end{align*}
In the last step, we used Lemma \ref{lem:compactness} and the assumption $\|f_0\|_{\dot{H}^1} = 1$.
We may bound $|\langle \mtx{R}(g_2),\, f_0 \rangle_{\dot{H}^1}|$ similarly.
Hence,
\begin{align*}
& \big\|\mtx{K}(g_1)-\mtx{K}(g_2)\big\|_{L^2([-1,1])}\\
&\leq  C\|\mtx{R}(g_2)\|_{L^2([-1,1])}\|g_1-g_2\|_{L^2([-1,1])}
+ C\big\|\mtx{R}(g_1)-\mtx{R}(g_2)\big\|_{L^2([-1,1])},
\end{align*}
where $C$ depends on $\mtx{M}$ and $f_0$.
By Lemma \ref{lem: stability of the R operator} below, we obtain that
\[
\big\|\mtx{K}(g_1)-\mtx{K}(g_2)\big\|_{L^2([-1,1])}\\
\leq  C|\d| \|g_1-g_2\|_{L^2([-1,1])}.
\]
Therefore, as long as $|\d|$ is suitably small, $\mtx{K}$ is a contraction mapping from $B_r$ to itself in the $L^2([-1,1])$-metric.

Now by the contraction mapping theorem, $\mtx{K}$ has a unique fixed-point in $B_r$, denoted by $g_\d$, so
\[
\big(\mtx{M}-\lam_0 I \big) g_\d
=
\langle \mtx{R}(g_\d),\, f_0 \rangle_{\dot{H}^1} g_\d
- \mtx{P}\big(\mtx{R}(g_\d) \big),
\]
which is exactly \eqref{eqn: nonlinear eigenvalue problem recast 2}.
This further implies \eqref{eqn: nonlinear eigenvalue problem} holds with $f = f_\d := f_0+g_\d$ and with $\lam$ defined by \eqref{eqn: formula for lambda} where $g$ needs to be replaced by $g_\d$.
Note that by Lemma \ref{lem: stability of zeros}, $f_0+g_\d$ has the same number of zeros as $f_0$ on $[-1,1]$, and each $I_i$ in \eqref{eqn: nonlinear eigenvalue problem} is well-defined and non-empty.
It is clear that $g_{(0,\cdots,0)} = 0$ by the uniqueness.

Lastly, we show that $g_\d$ has Lipschitz dependence on $\d$ in the $L^2([-1,1])$-topology, which further implies Lipschitz dependence of $f_\d$ on $\d$.
To stress the $\d$-dependence, we shall write the operators $\mtx{K}$ and $\mtx{R}$ as $\mtx{K}_\d$ and $\mtx{R}_\d$ respectively in the rest of the proof.
Take two $n$-tuples $\d$ and $\d'$ such that the corresponding $g_\d$ and $g_{\d'}$ are well-defined in $B_r$.
We argue as in \eqref{eqn: difference of K(g_1) and K(g_2) in L^2}
\begin{align*}
&\|g_\d - g_{\d'}\|_{L^2} \\
&= \big\|\mtx{K}_\d(g_\d) - \mtx{K}_{\d'}(g_{\d'})\big\|_{L^2} \\
&\leq C\big\|\langle \mtx{R}_\d(g_\d),\, f_0 \rangle_{\dot{H}^1} (f_0+ g_\d)
- \mtx{R}_\d (g_\d)
- \langle \mtx{R}_{\d'}(g_{\d'}),\, f_0 \rangle_{\dot{H}^1} (f_0+g_{\d'})
+ \mtx{R}_{\d'}(g_{\d'}) \big\|_{L^2} \\
&\leq C\big\|\mtx{R}_\d (g_\d) - \mtx{R}_{\d'}(g_{\d'}) \big\|_{L^2}
+ C\big|\langle \mtx{R}_{\d'}(g_{\d'}),\, f_0 \rangle_{\dot{H}^1} \big|\,
\big\|g_\d - g_{\d'}\big\|_{L^2} \\
&\leq C\big\|\mtx{R}_\d (g_\d) - \mtx{R}_{\d}(g_{\d'}) \big\|_{L^2}
+ C\left\|\sum_{i = 1}^n \chi_{I_i(g_{\d'})} (\d_i-\d_i') \mtx{M}(f_0+g_{\d'}) \right\|_{L^2}\\
&\quad + C \|\mtx{R}_{\d'}(g_{\d'})\|_{L^2([-1,1])}
\big\|g_\d - g_{\d'}\big\|_{L^2} \\
&\leq C|\d| \|g_\d - g_{\d'}\|_{L^2}
+ C|\d-\d'|\| \mtx{M}(f_0+g_{\d'}) \|_{L^2}
+ C |\d'| \big\|g_\d - g_{\d'}\big\|_{L^2},
\end{align*}
where $C$ depends on $\mtx{M}$ and $f_0$.
In the last inequality, we used Lemma \ref{lem: stability of the R operator} below.
Here $|\d-\d'|: = \max_{i}|\d_i-\d_i'|$.
Assuming $|\d|$ and $|\d'|$ to be smaller if needed, we obtain that
\[
\|g_\d - g_{\d'}\|_{L^2}
\leq C|\d-\d'|\| \mtx{M}(f_0+g_{\d'}) \|_{L^2}
\leq C|\d-\d'|,
\]
where $C$ depends on $\mtx{M}$ and $f_0$. This also implies $c(f_\delta)\neq 0$ provided that $|\delta|$ is small enough. In fact, by the definition \eqref{eqt:cf} we have 
\[|c(f_\delta) - c(f_0)|\leq C\|f_\delta - f_0\|_{L^2} = C\|g_\delta\|_{L^2} \leq C|\delta|. \]
Since $c(f_0)\neq 0$ (see Theorem \ref{thm:cf_nonzero}), $c(f_\delta)$ is also non-zero when $|\delta|$ is sufficiently small.

This completes the proof.
\end{proof}

In the above proof, we have used the following estimate.

\begin{lemma}\label{lem: stability of the R operator}
Fix an $n$-tuple $\delta$ as before. Let $B_r$ be defined as in the proof of Theorem \ref{thm:solution_general_class}.
Let the operator $\mtx{R}$ be defined in \eqref{eqn: operator R}.
Then for arbitrary $g_1,g_2\in B_r$,
\begin{equation}
\big\|\mtx{R}(g_1)-\mtx{R}(g_2)\big\|_{L^2([-1,1])}
\leq C |\d| \|g_1-g_2\|_{L^2([-1,1])},
\end{equation}
where the constant $C$ depends on $\mtx{M}$ and $f_0$.
Here $|\d|=\max\{|\d_1|,\cdots, |\d_n|\}$.
\end{lemma}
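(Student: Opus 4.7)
My plan is to introduce the piecewise-constant weights $\phi_j(x) := \suml_{i=1}^n \d_i \chi_{I_i(g_j)}(x)$, so that $\mtx{R}(g_j) = \phi_j\cdot \mtx{M}(f_0+g_j)$, and then split the difference by a discrete product rule,
\[
\mtx{R}(g_1) - \mtx{R}(g_2) = \phi_1\,\mtx{M}(g_1-g_2) + (\phi_1-\phi_2)\,\mtx{M}(f_0+g_2),
\]
using the linearity of $\mtx{M}$. Since the intervals $I_i(g_1)$ are pairwise essentially disjoint we have $\|\phi_1\|_{L^\infty}\leq |\d|$, and Lemma \ref{lem:compactness} together with the Poincar\'e inequality on $[-1,1]$ (which applies because $\mtx{M}$ maps into $H^1_0([-1,1])$) gives $\|\mtx{M}(g_1-g_2)\|_{L^2([-1,1])}\leq C\|g_1-g_2\|_{L^2([-1,1])}$. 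Hence the first summand is immediately bounded by $C|\d|\,\|g_1-g_2\|_{L^2}$.

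For the second summand I would apply Lemma \ref{lem: stability of zeros} with $h_j = \mtx{M}(g_j)$, which belongs to $D_{r_0}$ because $\|\mtx{M}(g_j)\|_{\dot{H}^2([-1,1])}\leq \|g_j\|_{\dot{H}^1}\leq r \leq r_0$, to locate the zeros $\tilde{x}_i^{(j)}$ of $\mtx{M}(f_0+g_j) = \lam_0 f_0 + \mtx{M}(g_j)$ that define $I_i(g_j)$. The lemma yields the Lipschitz estimate $\eta_i := |\tilde{x}_i^{(1)} - \tilde{x}_i^{(2)}|\leq C\|\mtx{M}(g_1-g_2)\|_{\dot{H}^1([-1,1])}\leq C\|g_1-g_2\|_{L^2}$. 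The support of $\phi_1 - \phi_2$ then lies in the symmetric difference of the two partitions of $[-1,1]$, a union of at most $2n$ intervals of lengths $\eta_i$, on which $|\phi_1-\phi_2|\leq 2|\d|$ pointwise. Crucially, $\mtx{M}(f_0+g_2)$ vanishes at $\tilde{x}_i^{(2)}$ and has $C^1$ norm uniformly bounded for $g_2\in B_r$ (since $\lam_0 f_0$ is smooth and $\|\mtx{M}(g_2)\|_{\dot{H}^2([-1,1])}\leq 1$ controls its contribution through the embedding $H^2\hookrightarrow C^1$). Therefore $|\mtx{M}(f_0+g_2)(x)|\leq C\eta_i$ on each small interval between $\tilde{x}_i^{(1)}$ and $\tilde{x}_i^{(2)}$, and combining gives
\[
\|(\phi_1-\phi_2)\,\mtx{M}(f_0+g_2)\|_{L^2}^2 \leq C|\d|^2\suml_{i=1}^n \eta_i\cdot \eta_i^2 \leq C|\d|^2\|g_1-g_2\|_{L^2}^3.
\]
The spare factor $\|g_1-g_2\|_{L^2}^{1/2}$ is absorbed into the constant via the uniform bound $\|g_1-g_2\|_{L^2}\leq C r \leq C$ coming from $g_1,g_2\in B_r$.

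The main obstacle lies in this second piece. Without the vanishing of $\mtx{M}(f_0+g_2)$ at the jump locations of $\phi_2$, only a crude $L^\infty\cdot L^2$ estimate would be available, producing a $|\d|\,\|g_1-g_2\|_{L^2}^{1/2}$ rate — insufficient to close the contraction mapping argument in Theorem \ref{thm:solution_general_class}. The structural fact that both the partition $\{I_i(g_j)\}$ and the function $\mtx{M}(f_0+g_j)$ are defined through the same zero set is what forces this matching of singularities and ultimately recovers the linear $\|g_1-g_2\|_{L^2}$ rate.
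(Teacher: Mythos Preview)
Your proof is correct and follows essentially the same route as the paper's: split the difference into a ``common-partition'' piece handled by linearity of $\mtx{M}$ and Lemma~\ref{lem:compactness}, and a ``mismatched-partition'' piece controlled by the Lipschitz stability of the zeros (Lemma~\ref{lem: stability of zeros}) together with the vanishing of $\mtx{M}(f_0+g_2)$ at those zeros. The only cosmetic differences are that the paper decomposes via $J_i = I_i(g_1)\cap I_i(g_2)$ and bounds $\mtx{R}(g_1)$, $\mtx{R}(g_2)$ separately on the complement, and that it uses the weaker embedding $H^1\hookrightarrow C^{1/2}$ (yielding $\eta_i^2$ rather than your $\eta_i^3$); both choices suffice.
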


\begin{proof}
Since $g_1,g_2\in B_r$, by Lemma \ref{lem: stability of zeros},  $\mtx{M}(f_0+g_j)$, $j = 1,2$, has exactly $(2n+1)$ roots on $[-1,1]$.
We denote them to be $-1=\tilde{x}_{-n}^{(j)} < \cdots< \tilde{x}_n^{(j)}=1$ as before.
Lemma \ref{lem: stability of zeros} implies that, for all $l\in \{-n,\cdots, n\}$,
\[
\big|\tilde{x}_{l}^{(1)}-\tilde{x}_{l}^{(2)}\big|
\leq
C \|\mtx{M}(g_1-g_2)\|_{\dot{H}^1([-1,1])}
\leq
C_* \|g_1-g_2\|_{L^2} =: \epsilon_*,
\]
where $C_*$ only depends on $f_0$.
Let $I_i(g_j)$, $j=1,2$, $i=1,\dots,n$, be determined by zeros of $g_j$ as in \eqref{eqn: def of I_i}. Denote $J_i := I_i(g_1)\cap I_i(g_2)$ and $J := \cup_{i = 1}^n J_i$.
Then for $j = 1,2$,
\[
[-1,1]\backslash J \subset \cup_{l = -n+1}^{n-1} \left[\tilde{x}_l^{(j)}-\epsilon_*, \, \tilde{x}_l^{(j)}+\epsilon_*\right].
\]
Then we derive that
\begin{align*}
&\quad\big\|\mtx{R}(g_1)-\mtx{R}(g_2)\big\|_{L^2([-1,1])}^2\\
&\leq \big\|\mtx{R}(g_1)-\mtx{R}(g_2)\big\|_{L^2(J)}^2 + C\|\mtx{R}(g_1)\|_{L^2([-1,1]\backslash J)}^2+C\|\mtx{R}(g_2)\|_{L^2([-1,1]\backslash J)}^2\\
&\leq \left\|\sum_{i = 1}^n \chi_{J_i} \d_i \mtx{M}(g_1-g_2)\right\|_{L^2(J)}^2 \\
&\quad + C|\d|^2 \sum_{l = -n+1}^{n-1} \left( \big\|\lam_0f_0 + \mtx{M}(g_1)\big\|_{L^2([\tilde{x}_{l}^{(1)}-\epsilon_*, \, \tilde{x}_{l}^{(1)}+\epsilon_*])}^2
+ \big\|\lam_0f_0 + \mtx{M}(g_2)\big\|_{L^2([\tilde{x}_{l}^{(2)}-\epsilon_*, \, \tilde{x}_{l}^{(2)}+\epsilon_*])}^2\right)\\
&\leq |\d|^2 \| \mtx{M}(g_1-g_2)\|_{L^2}^2 \\
&\quad + C|\d|^2 \epsilon_* \sum_{l = -n+1}^{n-1}\left(\big\|\lam_0f_0 + \mtx{M}(g_1)\big\|_{L^\infty([\tilde{x}_{l}^{(1)}-\epsilon_*, \, \tilde{x}_{l}^{(1)}+\epsilon_*])}^2
+ \big\|\lam_0f_0 + \mtx{M}(g_2)\big\|_{L^\infty([\tilde{x}_{l}^{(2)}-\epsilon_*, \, \tilde{x}_{l}^{(2)}+\epsilon_*])}^2\right)\\
&\leq  C |\d|^2 \|g_1-g_2\|_{L^2}^2
+ C|\d|^2 \epsilon_*^2\left(\big\|\lam_0f_0 + \mtx{M}(g_1)\big\|_{\dot{H}^1}^2
+ \big\|\lam_0f_0 + \mtx{M}(g_2)\big\|_{\dot{H}^1}^2\right)\\
&\leq C |\d|^2 \|g_1-g_2\|_{L^2}^2,
\end{align*}
where the constant $C$ depends on $\mtx{M}$ and $f_0$.
In the second last line, we used the facts that each $\tilde{x}_l^{(j)}$ is a zero of $\lam_0f_0 + \mtx{M}(g_j)$, and that $H^1_0([-1,1])\hookrightarrow C^{1/2}(\R)$.
Note that $n$ is a constant defined by $f_0$, so we can absorb it into the constant $C$.

This proves the desired inequality.
\end{proof}

We remark that the construction of solutions in Theorem \ref{thm:solution_general_class} for a particular integer $n$ (the number of pieces on $[0,1]$) also implies the existence of general class solutions corresponding to any integer $k$ in the range $2\leq k<n$. In fact, we can always choose $\delta$ so that $\delta_i\neq \delta_{i-1}$ for $i=2,\dots,k$, and $\delta_j=\delta_k$ for $j=k+1,\dots,n$.

Conceptually, our construction by perturbation only yields solutions to \eqref{eqt:piecewise_eigen} that are close to one of the eigenfunctions of $\mtx{M}$. It may be possible to construct more general solutions to \eqref{eqt:piecewise_eigen} with $\{\mu_i\}_{i=1}^n$ being more scattered, instead of concentrating around some eigenvalue of $\mtx{M}$.

\subsection{General properties} 
We will prove some general properties of the solutions in the general class \eqref{general_claiss_formal}, which completes the proof of the second part of Theorem \ref{thm:main_theorem}.\\

We first show the claimed regularity of the solutions in the general class.

\begin{theorem}\label{thm:regularity_general}
Let $(\mu_1,\cdots,\mu_n, f)\in \R_{>0}^n\times \mathbb{V}$ be a solution to \eqref{eqt:piecewise_eigen} for some $n\geq 2$ and some $0=x_0<x_1<\cdots<x_{n-1}<x_n=1$. Then, for each $i=1,\dots,n$, $f\in H^2([x_{i-1},x_i])$, and $f$ is smooth in the interior of $(x_{i-1},x_i)$.
\end{theorem}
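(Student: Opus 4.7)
The plan is to adapt the two-step regularity proof of Theorem \ref{thm:eigenvalue_problem} to the piecewise setting. The $H^2$-regularity on each $[x_{i-1},x_i]$ is immediate: the equation $f = \mu_i^{-1}\mtx{M}(f)$ on $I_i$, combined with the bound $\|\mtx{M}(f)\|_{\dot{H}^2([-1,1])}\le \|f\|_{\dot{H}^1}$ from Lemma \ref{lem:compactness}, yields the claim after dividing by $\mu_i>0$.

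For the interior smoothness, I would fix a compact subinterval $K\subset\subset (x_{i-1},x_i)$ and aim to derive a local identity of the form
\[
\partial_x^2 f(x) = -\mu_i^{-2} f(x) + g(x), \qquad x\in K,
\]
with $g\in C^\infty(K)$, analogous to the identity that drove the bootstrap in Theorem \ref{thm:eigenvalue_problem}. To this end, I would first differentiate the relation $\mu_j f = \mtx{M}(f)$ on each $I_j$ and use \eqref{eqt:linear_map_derivative} to obtain
\[
f_x = -\phi\bigl(\mtx{H}(f)+c(f)\bigr) \ \text{a.e. on }\R, \qquad \phi := \sum_{j=1}^n \mu_j^{-1}\chi_{I_j},
\]
where the vanishing of $f_x$ outside $[-1,1]$ uses the compact support of $f$. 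Applying the Hilbert transform and invoking $\mu_i f_{xx}= -\mtx{H}(f_x)$ on the interior of $I_i$, I would then split the result according to the support decomposition of $\phi$. Every term with index $j\neq i$ integrates $\mtx{H}(f)+c(f)$ against $1/(x-y)$ over $I_j$, a region at positive distance from $K$, giving a function smooth in $x\in K$ by differentiation under the integral sign. For the remaining $j=i$ contribution I would reuse the splitting trick from Theorem \ref{thm:eigenvalue_problem}: write $\chi_{I_i}=\chi_{[-1,1]}-\chi_{[-1,1]\setminus I_i}$ so that the $\chi_{[-1,1]\setminus I_i}$ piece is again supported at positive distance from $K$, then expand $\mtx{H}\bigl(\chi_{[-1,1]}(\mtx{H}(f)+c(f))\bigr)$ as $\mtx{H}(\mtx{H}(f))-\mtx{H}((1-\chi_{[-1,1]})\mtx{H}(f))+c(f)\mtx{H}(\chi_{[-1,1]})$. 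The first summand equals $-f$, the second is smooth on $K$ since $\R\setminus[-1,1]$ lies at positive distance from $K$, and the third is the explicit smooth function $\frac{c(f)}{\pi}\ln|(x+1)/(x-1)|$ on $(-1,1)$. Collecting the contributions produces the displayed identity for $\partial_x^2 f$.

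From here I would bootstrap exactly as at the end of the proof of Theorem \ref{thm:eigenvalue_problem}: inductively, $\partial_x^{k+2}f = -\mu_i^{-2}\partial_x^k f + \partial_x^k g\in C(K)$ for every $k\ge 0$, which upgrades $f\in H^2(K)$ to $f\in C^\infty(K)$; since $K\subset\subset(x_{i-1},x_i)$ was arbitrary, $f$ is smooth on the open interval $(x_{i-1},x_i)$. The only step requiring genuine attention is the bookkeeping verifying that each localization indicator appearing in the decomposition — $\chi_{I_j}$ with $j\neq i$, $\chi_{[-1,1]\setminus I_i}$, and $1-\chi_{[-1,1]}$ — has support at positive distance from $K$; this is automatic once $K$ is strictly inside $(x_{i-1},x_i)$ because the points $\{\pm x_l\}_{l=0}^n$ and $\pm 1$ are then all bounded away from $K$.
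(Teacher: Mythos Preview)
Your proposal is correct and follows essentially the same approach as the paper's proof: immediate $H^2$-regularity from Lemma \ref{lem:compactness}, then an interior bootstrap via the identity $\partial_x^2 f = -\mu_i^{-2} f + (\text{smooth})$ obtained by applying $\mtx{H}$ to the piecewise formula for $f_x$ and isolating the singular contribution. The paper's decomposition is marginally more direct---it writes $\chi_{I_i}\mtx{H}(f) = \mtx{H}(f) - (1-\chi_{I_i})\mtx{H}(f)$ in one step rather than routing through $\chi_{[-1,1]}$---but the structure, the smooth remainder terms, and the inductive bootstrap are the same.
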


\begin{proof}
By the derivative formula \eqref{eqt:linear_map_derivative}, we find that 
\[\|f\|_{\dot{H}^2([x_{i-1},x_i])} = \mu_i^{-1}\|\mtx{M}(f)\|_{\dot{H}^2([x_{i-1},x_i])} \leq \mu_i^{-1}\|\mtx{H}(f)\|_{\dot{H}^1([-1,1])}\leq \mu_i^{-1}\|f\|_{\dot{H}^1}.\]
This shows that $f\in H^2([x_{i-1},x_i])$ for each $i$.

Next, we prove that $f\in C^\infty([y_{i-1},y_i])$ for any $x_{i-1}<y_{i-1}<y_i<x_i$. The proof is in the same spirit as that of Theorem \ref{thm:eigenvalue_problem}. Denote $I_i = [-x_i,-x_{i-1}]\cup[x_{i-1},x_i]$. Again from the derivative formula $\eqref{eqt:linear_map_derivative}$ we find that, on $[y_{i-1},y_i]$, 
\begin{align*}
\partial_x^2f &=  -\mu_i^{-1} \mtx{H}(f_x)\\
&= \mu_i^{-1} \suml_{j=1}^n\mu_j^{-1}\mtx{H}\big(\chi_{I_j}(\mtx{H}(f) + c(f)\big)\\
&= \mu_i^{-2}\mtx{H}(\mtx{H}(f)) - \mu_i^{-2}\mtx{H}\big((1-\chi_{I_i})\mtx{H}(f)\big) + \mu_i^{-1}\suml_{j\neq i}\mu_j^{-1}\mtx{H}\big(\chi_{I_j}\mtx{H}(f)\big) + \mu_i^{-1}c(f)\suml_{j=1}^n\mu_j^{-1}\mtx{H}(\chi_{I_j})\\
&=: -\mu_i^{-2}f - \mu_i^{-2} g_f + \mu_i^{-1} \hat{g}_f + \mu_i^{-1}c(f) h_f,
\end{align*}
where 
\[g_f(x) := \mtx{H}\left((1-\chi_{I_i})\mtx{H}(f)\right)(x) = \frac{1}{\pi}\int_{\R\backslash I_i}\frac{\mtx{H}(f)(y)}{x-y}\idiff y,\]
\[\hat{g}_f(x) := \suml_{j\neq i}\mu_j^{-1}\mtx{H}\big(\chi_{I_j}\mtx{H}(f)\big)(x) = \suml_{j\neq i}\frac{1}{\mu_j\pi}\int_{I_j}\frac{\mtx{H}(f)(y)}{x-y}\idiff y,\]
and
\[h_f(x) := \suml_{j=1}^n\mu_j^{-1}\mtx{H}(\chi_{I_j})(x) = \frac{1}{\pi}\suml_{j=1}^n\mu_j^{-1}\left(\ln\left|\frac{x+x_j}{x+x_{j-1}}\right|+\ln\left|\frac{x-x_{j-1}}{x-x_j}\right|\right)\]
are all infinitely differentiable on $[y_{i-1},y_i]$. This inductively implies that 
\[\partial_x^{k+2}f = -\mu_i^{-2}\partial_x^kf - \mu_i^{-2}\partial_x^kg_f + \mu_i^{-1} \partial_x^k\hat{g}_f + \mu_i^{-1}c(f) \partial_x^kh_f \in C([y_{i-1},y_i])\]
for all $k\geq0$, and thus $f\in C^\infty([y_{i-1},y_i])$.
\end{proof}

The next theorem is an analog of Theorem \ref{thm:non-degeneracy}, providing estimates for the derivatives at the zeros of a solution in the general class \eqref{general_claiss_formal}.

\begin{theorem}\label{thm:non-degeneracy_general}
Let $(\mu_1,\cdots,\mu_n, f)\in \R_{>0}^n\times \mathbb{V}$ be a solution to \eqref{eqt:piecewise_eigen} for some $n\geq 2$ and some $0=x_0<x_1<\cdots<x_{n-1}<x_n=1$. Let $\mu_{\max} = \max_{i}\mu_i$ and $\mu_{\min} = \min_{i}\mu_i$. Define $I_i:= [-x_i,-x_{i-1}]\cup[x_{i-1},x_i], i=1,\dots,n$. Then, for each $i$ and for any $r\in I_i$ such that $f(r)=0$, 
\begin{equation}\label{eqt:non-degeneracy_general}
(\partial_xf(r))^2 = \frac{c(f)^2}{\mu_i^2} + \frac{1}{\pi\mu_i^2}\suml_{j=1}^n\mu_j\int_{I_j} \frac{f(x)^2}{(r-x)^2}\idiff x\geq \frac{c(f)^2}{\mu_{\max}^2} + \frac{\mu_{\min}^3}{4\pi\mu_{\max}^2}\|f\|_{\dot{H}^1}^2.
\end{equation}
Here $\partial_xf(r)$ denotes the one-sided derivatives $\partial_{x-}f(r)$ if $r=-x_{i-1},x_i$ and $\partial_{x+}f(r)$ if $r= -x_i,x_{i-1}$. 
\end{theorem}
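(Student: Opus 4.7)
The plan is to adapt the proof of Theorem~\ref{thm:non-degeneracy} to the piecewise eigenvalue setting by replacing the auxiliary quantity $f^2$ used there with a weighted analogue
\[
\psi(x) \defeq \suml_{j=1}^n \mu_j \chi_{I_j}(x)\, f(x)^2.
\]
By Theorem~\ref{thm:regularity_general}, $f \in H^2(I_j)$ for each $j$, and the vanishing $f(\pm x_j) = 0$ ensures that $\psi$ and $\partial_x \psi$ are continuous across the internal nodes $\pm x_j$. A direct check then gives $\psi \in H^2(\R)$, with $\psi(r) = 0$ and $\psi(x) = O((x-r)^2)$ near any zero $r$ of $f$ on $[-1,1]$. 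This regularity is what will let me evaluate $(-\Delta)^{1/2}\psi = \mtx{H}(\partial_x \psi)$ pointwise.

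First, I would derive a clean formula for $(-\Delta)^{1/2}\psi$. Restricting \eqref{eqt:linear_map_derivative} to $I_j^\circ$ and using $\mtx{M}(f) = \mu_j f$ yield $\mu_j \partial_x f = -(\mtx{H}(f) + c(f))$, so
\[
\partial_x \psi = 2\mu_j f\,\partial_x f = -2 f\bigl(\mtx{H}(f) + c(f)\bigr)\quad\text{on each } I_j^\circ.
\]
Both sides are continuous on $\R$ and vanish off $[-1,1]$, so this identity extends globally. Applying $\mtx{H}$, invoking Tricomi's identity $\mtx{H}(f\mtx{H}(f)) = (\mtx{H}(f)^2 - f^2)/2$ exactly as in the proof of Theorem~\ref{thm:non-degeneracy}, and regrouping terms via the completed square $(\mtx{H}(f)+c(f))^2$, I obtain
\[
(-\Delta)^{1/2}\psi(x) = f(x)^2 + c(f)^2 - \bigl(\mtx{H}(f)(x) + c(f)\bigr)^2,\quad x \in \R.
\]

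Next, I evaluate this identity at a zero $r \in I_i$ of $f$. On one side, the relation $\mtx{H}(f) + c(f) = -\mu_i \partial_x f$ on $I_i$ (interpreted one-sidedly at $r = \pm x_{i-1}, \pm x_i$, as in the statement) gives $c(f)^2 - \mu_i^2 (\partial_x f(r))^2$. On the other side, since $\psi(r) = 0$ and $\psi(x)/(r-x)^2$ is integrable thanks to the quadratic vanishing of $\psi$ at $r$, the pointwise fractional-Laplacian formula yields
\[
(-\Delta)^{1/2}\psi(r) = -\frac{1}{\pi}\int_{\R}\frac{\psi(x)}{(r-x)^2}\idiff x = -\frac{1}{\pi}\suml_{j=1}^n \mu_j \int_{I_j} \frac{f(x)^2}{(r-x)^2}\idiff x.
\]
Equating the two expressions and rearranging produces the claimed equality in \eqref{eqt:non-degeneracy_general}. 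For the lower bound, I use $\mu_i^2 \leq \mu_{\max}^2$, $\mu_j \geq \mu_{\min}$, and $(r-x)^2 \leq 4$ on $[-1,1]$ to dominate the weighted sum below by $(\mu_{\min}/4)\|f\|_{L^2}^2$; since $\mtx{M}(f) = \suml_j \mu_j \chi_{I_j} f$ gives $\|\mtx{M}(f)\|_{\dot{H}^1}^2 = \suml_j \mu_j^2 \int_{I_j}(\partial_x f)^2 \geq \mu_{\min}^2 \|f\|_{\dot{H}^1}^2$, Lemma~\ref{lem:compactness} then yields $\|f\|_{L^2}^2 \geq \mu_{\min}^2 \|f\|_{\dot{H}^1}^2$, completing the bound. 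The main obstacle is verifying the regularity of $\psi$ at the internal nodes $\pm x_j$ so that $(-\Delta)^{1/2}\psi$ can be evaluated pointwise and its singular-integral representation applied when $r$ may coincide with a node; this reduces to the observation that $f(\pm x_j) = 0$ forces both $\psi$ and $\partial_x\psi$ to vanish and match continuously across each node.
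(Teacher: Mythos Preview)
Your proof is correct and essentially identical to the paper's: your auxiliary function $\psi=\sum_j \mu_j\chi_{I_j}f^2$ is exactly the paper's $\sum_j \mu_j g_j^2$ with $g_j=\chi_{I_j}f$, and both arguments proceed by applying $\mtx{H}\circ\partial_x$ to this quantity, invoking Tricomi's identity, evaluating at the zero $r$ via the singular-integral representation, and chaining the same inequalities (including Lemma~\ref{lem:compactness}) for the lower bound. The only difference is cosmetic---you package the sum into a single $\psi$ and verify its $H^2$-regularity up front, whereas the paper keeps the pieces $g_j^2$ separate throughout.
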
  

\begin{proof}
Define $g_i := \chi_{I_i}f$, so that $f=\sum_{i=1}^ng_i$ and $\mtx{M}(f)=\sum_{i=1}^n\mu_ig_i$. Since $x_0,x_1,\dots,x_n$ must be zeros of $f$, we have $g_i\in \mathbb{V}$ for each $i$. \eqref{eqt:piecewise_eigen} and $\eqref{eqt:linear_map_derivative}$ together imply that 
\[\suml_{i=1}^n\mu_i \partial_xg_i = \partial_x\mtx{M}(f) = -\chi_{[-1,1]}\big(\mtx{H}(f) + c(f)\big).\]
From this we can derive the following on $[-1,1]$:
\begin{align*}
\suml_{i=1}^n\mu_i\mtx{H}(\partial_x(g_i^2)) &= 2\suml_{i=1}^n\mu_i\mtx{H}(f\partial_xg_i)\\
&= -2\mtx{H}\big(f(\mtx{H}(f)+c(f))\big)\\
&= -\mtx{H}(f)^2 + f^2 - 2c(f)\mtx{H}(f) \\
&= -(\mtx{H}(f)+c(f))^2 + f^2 + c(f)^2 \\
&= -\sum_{i=1}^n\mu_i^2(\partial_xg_i)^2 + f^2 + c(f)^2\\
&= -\sum_{i=1}^n\mu_i^2\chi_{I_i}(\partial_xf)^2 + f^2 + c(f)^2.
\end{align*}
Here we have used Tricomi's identity that $\mtx{H}(g\mtx{H}(g)) = (\mtx{H}(g)^2 - g^2)/2$ for any $g\in L^2(\R)$. On the other hand, for any $r\in [-1,1]$ such that $f(r)=0$, we have
\begin{align*}
\suml_{i=1}^n\mu_i\mtx{H}(\partial_x(g_i^2))(r) &= \suml_{i=1}^n\mu_i(-\Delta)^{1/2}(g_i^2)(r) \\
&= \suml_{i=1}^n\frac{\mu_i}{\pi}\cdot \text{P.V.}\int_{\R}\frac{g_i(r)^2-g_i(x)^2}{(r-x)^2}\idiff x \\
&= -\frac{1}{\pi}\suml_{i=1}^n\mu_i\int_{\R}\frac{g_i(x)^2}{(r-x)^2}\idiff x\\
&= -\frac{1}{\pi}\suml_{i=1}^n\mu_i\int_{I_i}\frac{f(x)^2}{(r-x)^2}\idiff x
\end{align*}
Therefore, if $r$ is a zero of $f$ in $I_i$, we have
\begin{align*}
(\partial_xf(r))^2 - \frac{c(f)^2}{\mu_i^2} &= \frac{1}{\pi\mu_i^2}\suml_{j=1}^n\mu_j\int_{I_j} \frac{f(x)^2}{(r-x)^2}\idiff x \geq \frac{\mu_{\min}}{4\pi\mu_{\max}^2}\|f\|_{L^2}^2\\
&\geq \frac{\mu_{\min}}{4\pi\mu_{\max}^2} \|\mtx{M}(f)\|_{\dot{H}^1}^2 = \frac{\mu_{\min}}{4\pi\mu_{\max}^2} \suml_{i=1}^n\mu_j^2\|g_j\|_{\dot{H}^1}^2 \geq \frac{\mu_{\min}^3}{4\pi\mu_{\max}^2} \|f\|_{\dot{H}^1}^2,
\end{align*}
which is \eqref{eqt:non-degeneracy_general}. 
\end{proof}

Finally, we show that all solutions in the general class \eqref{general_claiss_formal} must be sign-changing on $(0,1)$.

\begin{corollary}\label{cor:sign-changing_general}
Let $(\mu_1,\cdots,\mu_n, f)\in \R_{>0}^n\times \mathbb{V}$ be a solution to \eqref{eqt:piecewise_eigen} for some $n\geq 2$ and some $0=x_0<x_1<\cdots<x_{n-1}<x_n=1$. Suppose that $\|f\|_{H^1}>0$. Then, $f$ is sign-changing on $(0,1)$. In particular, $\sgn(f)(x_i+)\cdot \sgn(f)(x_i-) =-1$ for each $1\leq i<n$.
\end{corollary}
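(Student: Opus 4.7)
The plan is to show that at every interior node $x_i$ with $1\leq i<n$, $f$ must cross zero transversally with opposite signs on the two sides; the corollary then follows immediately, because $n\geq 2$ guarantees the existence of at least one such $x_i$.

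First, I would read off a matching condition on the one-sided derivatives of $f$ at $x_i$. On the two adjacent intervals $[x_{i-1},x_i]$ and $[x_i,x_{i+1}]$, the relation \eqref{eqt:piecewise_eigen} gives $\mtx{M}(f)=\mu_i f$ and $\mtx{M}(f)=\mu_{i+1}f$, respectively. Theorem \ref{thm:regularity_general} ensures $f\in H^2$ on each piece, so $\partial_x f$ has well-defined one-sided limits $\partial_{x\pm}f(x_i)$. Combining with the derivative formula \eqref{eqt:linear_map_derivative} yields
\[
\mu_i\,\partial_{x-}f(x_i) \;=\; -\bigl(\mtx{H}(f)(x_i^-)+c(f)\bigr),\qquad \mu_{i+1}\,\partial_{x+}f(x_i) \;=\; -\bigl(\mtx{H}(f)(x_i^+)+c(f)\bigr).
\]

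Second, I would observe that $\mtx{H}(f)$ is \emph{globally} continuous: since $f\in H^1(\R)$ and the Hilbert transform commutes with $\partial_x$ and is bounded on $L^2(\R)$, we have $\mtx{H}(f)\in H^1(\R)\hookrightarrow C^{1/2}(\R)$. In particular $\mtx{H}(f)(x_i^-)=\mtx{H}(f)(x_i^+)$, and equating the two identities above gives
\[
\mu_i\,\partial_{x-}f(x_i) \;=\; \mu_{i+1}\,\partial_{x+}f(x_i).
\]

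Third, since $f\in\mathbb{V}\subset H^1_0([-1,1])$, the hypothesis $\|f\|_{H^1}>0$ forces $\|f\|_{\dot{H}^1}>0$, so the non-degeneracy estimate \eqref{eqt:non-degeneracy_general} (applied once with $r=x_i\in I_i$ and once with $r=x_i\in I_{i+1}$) shows that both $\partial_{x-}f(x_i)$ and $\partial_{x+}f(x_i)$ are nonzero. Because $\mu_i,\mu_{i+1}>0$, the matching identity then forces $\sgn(\partial_{x-}f(x_i))=\sgn(\partial_{x+}f(x_i))$. Together with $f(x_i)=0$, this means $f$ passes through zero monotonically at $x_i$, giving $\sgn f(x_i^+)\cdot\sgn f(x_i^-)=-1$. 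Applied at $x_1,\dots,x_{n-1}$, this yields the ``in particular'' clause and shows that $f$ is sign-changing on $(0,1)$.

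The only substantive ingredient is the continuity of $\mtx{H}(f)$ at the nodes $x_i$, which prevents the scenario in which the two pieces of $f$ meet tangentially at $x_i$ from the same side; the rest is bookkeeping on top of Theorems \ref{thm:regularity_general} and \ref{thm:non-degeneracy_general} together with the assumption $\mu_i\neq\mu_{i+1}$.
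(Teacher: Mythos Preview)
Your proof is correct and follows essentially the same route as the paper's. Both arguments hinge on the same matching identity $\mu_i\,\partial_{x-}f(x_i)=\mu_{i+1}\,\partial_{x+}f(x_i)$, obtained from the global continuity of $\partial_x\mtx{M}(f)=-\chi_{[-1,1]}(\mtx{H}(f)+c(f))$ (the paper phrases this as $\mtx{M}(f)\in H^2([-1,1])\subset C^1$, you phrase it as $\mtx{H}(f)\in H^1(\R)\hookrightarrow C^{1/2}$), and both then invoke the non-degeneracy estimate \eqref{eqt:non-degeneracy_general} to conclude that the one-sided derivatives are nonzero and share a sign. One small remark: you list the hypothesis $\mu_i\neq\mu_{i+1}$ among the ingredients, but your argument never uses it --- positivity of the $\mu_i$'s alone suffices.
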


\begin{proof} Let $I_i$ be defined as in Theorem \ref{thm:non-degeneracy_general}. For each $1\leq i<n$, since $x_i\in I_i\cap I_{i+1}$ is a zero of $f$, it follows from \eqref{eqt:non-degeneracy_general} that 
\[|\partial_x\mtx{M}(f)(x_i)| = \mu_i|\partial_{x-}f(x_i)|= \mu_{i+1}|\partial_{x+}f(x_i)| >0.\]
Since $\mtx{M}(f)\in H^2([-1,1])\subset C^1([-1,1])$, $\mtx{M}(f)$ must be sign-changing in a small neighborhood of $x_i$. By the piecewise relation \eqref{eqt:piecewise_eigen}, $f$ has the same sign as $\mtx{M}(f)$. This proves the claim.
\end{proof}

We have now completed the proof of Theorem \ref{thm:main_theorem}. 

\appendix 

\section{Plots of the eigenfunctions}\label{sec:numerical}
We present some figures to visualize some of the theoretical results in this paper. Approximations of the eigen-pairs of $\mtx{M}$ are obtained by numerically solving the eigenvalue problem \eqref{eqt:eigen_problem}. We use the codes from \cite{Matlabcode} to numerically compute $\mtx{M}(f)$ for $f\in \mathbb{V}$.   

\begin{figure}[!ht]
\centering
    \begin{subfigure}[b]{0.32\textwidth}
        \includegraphics[width=1\textwidth]{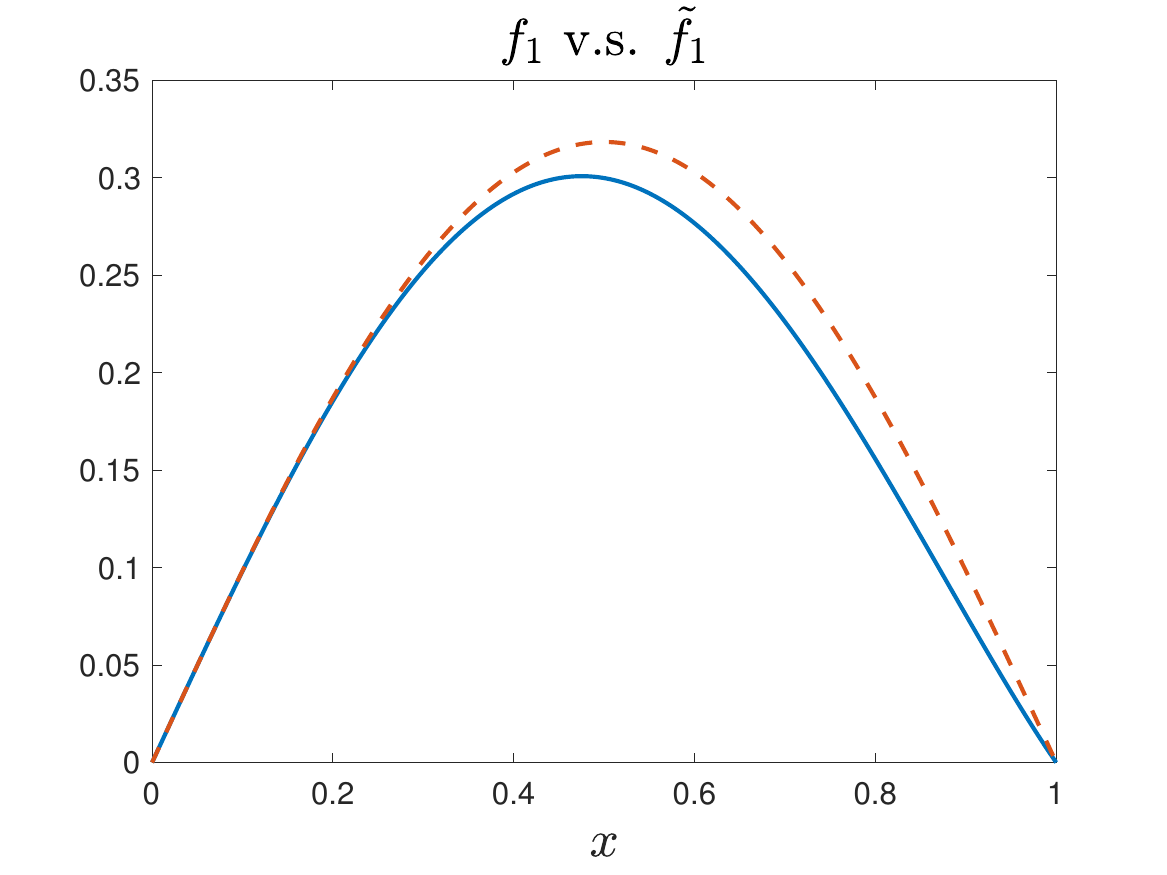}
        \caption{$\begin{array}{c}\lambda_1 \approx 0.2896 \\ 		\tilde{\lambda}_1 \approx 0.3183 \end{array}$}
    \end{subfigure}
    \begin{subfigure}[b]{0.32\textwidth}
        \includegraphics[width=1\textwidth]{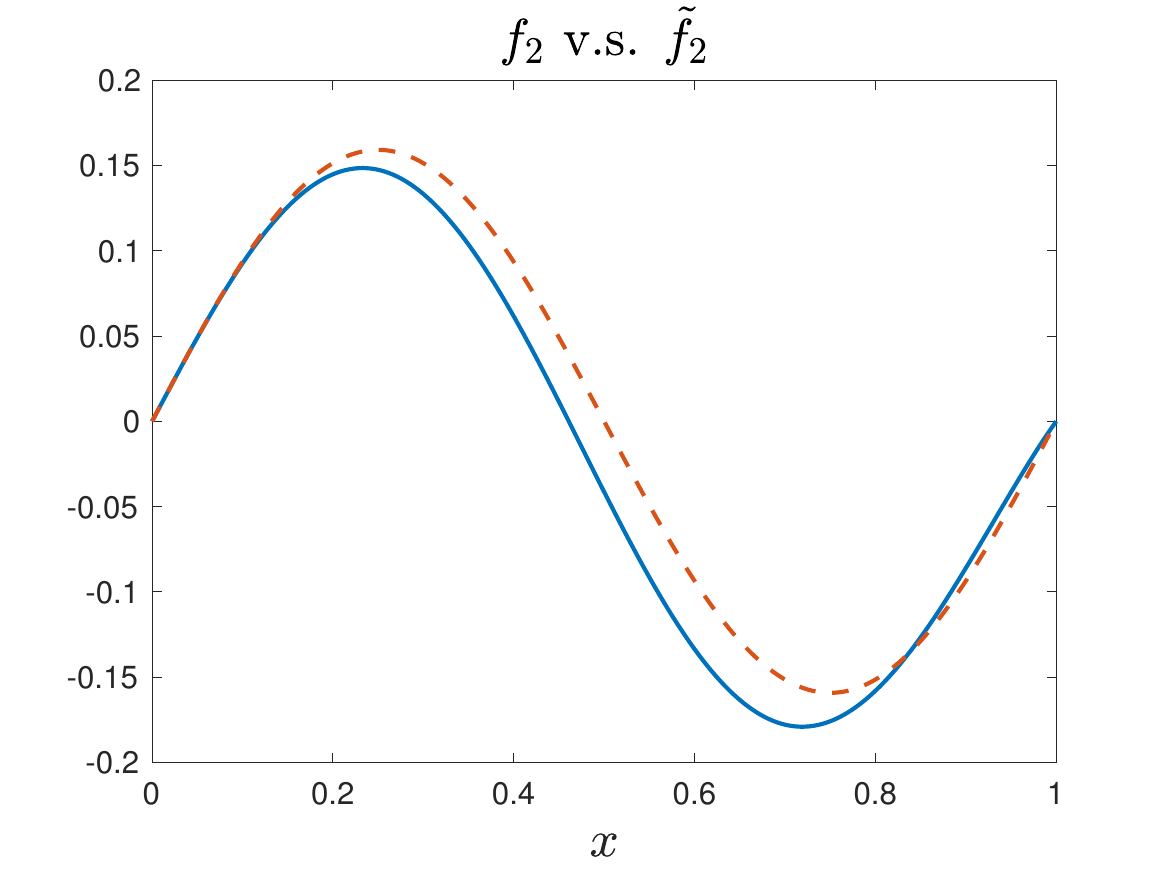}
        \caption{$\begin{array}{c}\lambda_2 \approx 0.1509 \\ 		\tilde{\lambda}_2 \approx 0.1592 \end{array}$}
    \end{subfigure}
    \begin{subfigure}[b]{0.32\textwidth}
        \includegraphics[width=1\textwidth]{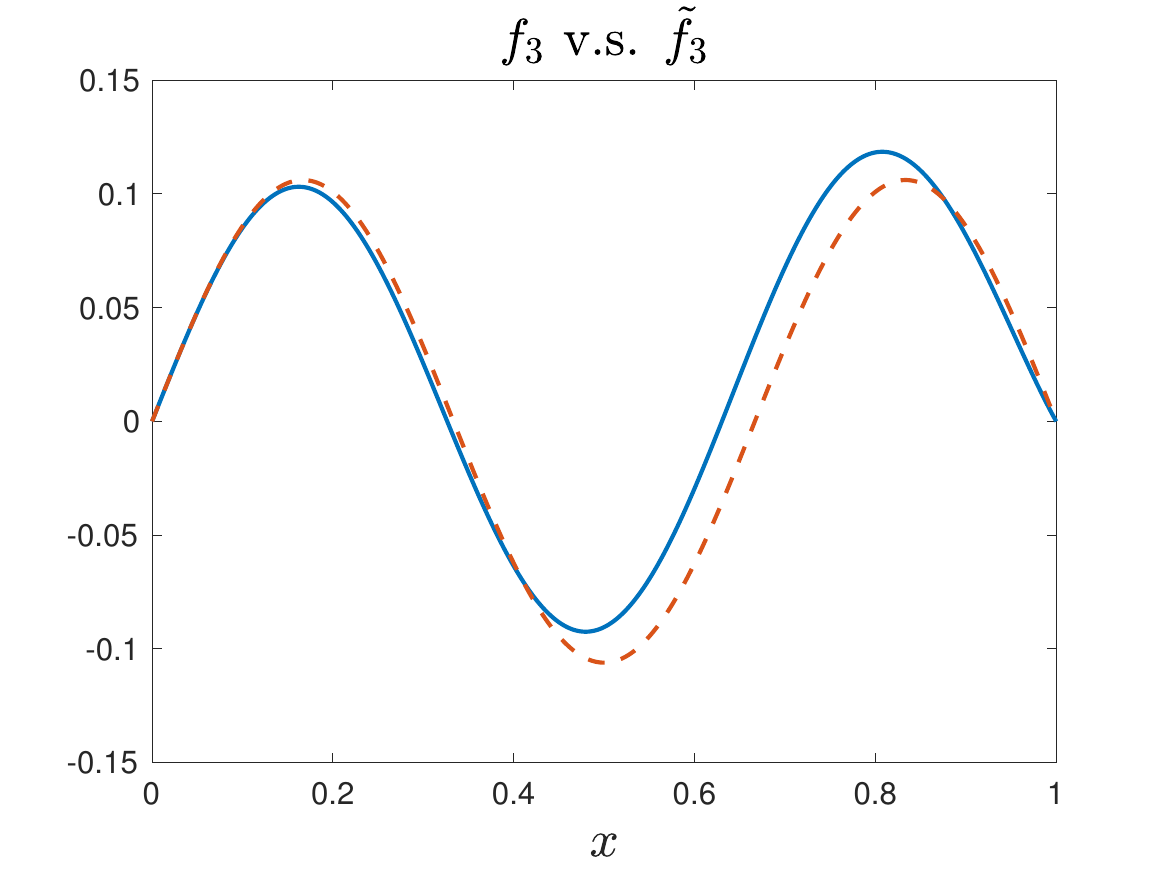}
        \caption{$\begin{array}{c}\lambda_3 \approx 0.1022 \\ 		\tilde{\lambda}_3 \approx 0.1061 \end{array}$}
    \end{subfigure}
    \begin{subfigure}[b]{0.32\textwidth}
        \includegraphics[width=1\textwidth]{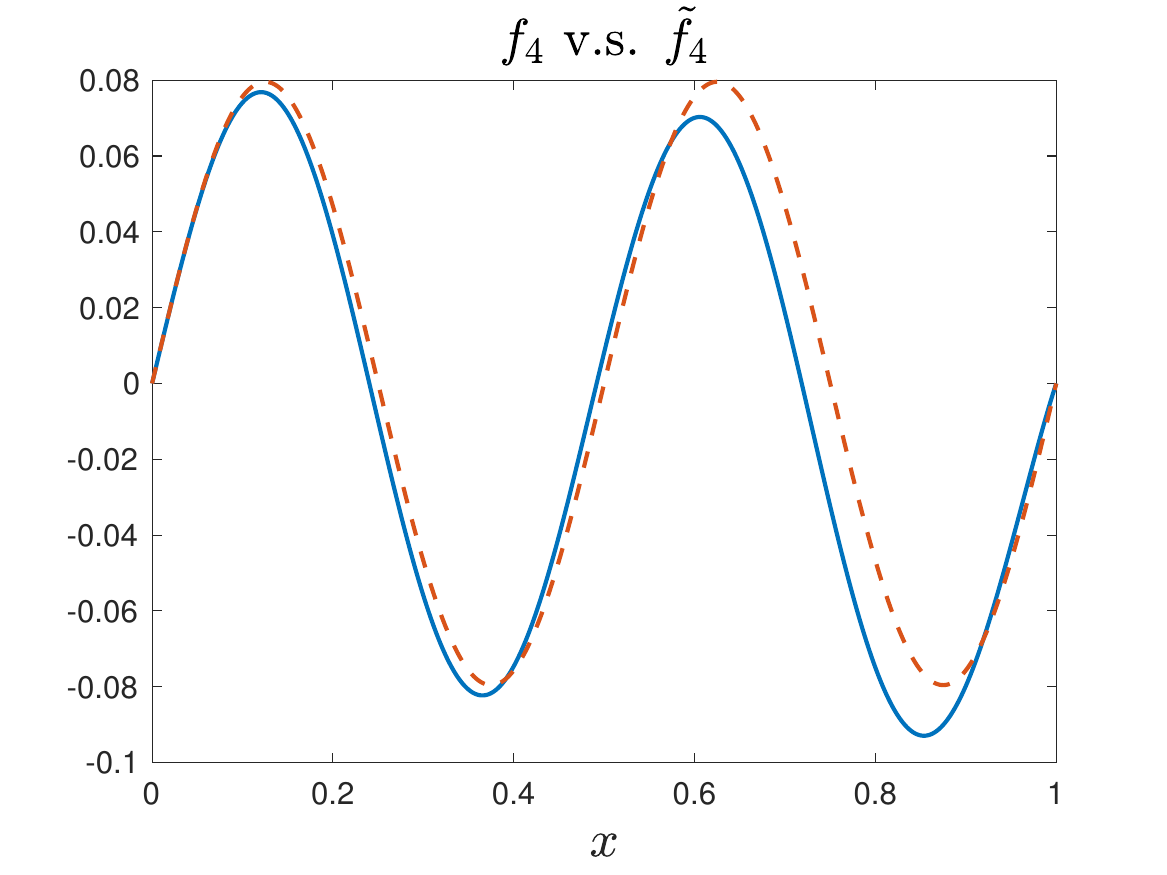}
        \caption{$\begin{array}{c}\lambda_4 \approx 0.0773 \\ 		\tilde{\lambda}_4 \approx 0.0796 \end{array}$}
    \end{subfigure}
    \begin{subfigure}[b]{0.32\textwidth}
        \includegraphics[width=1\textwidth]{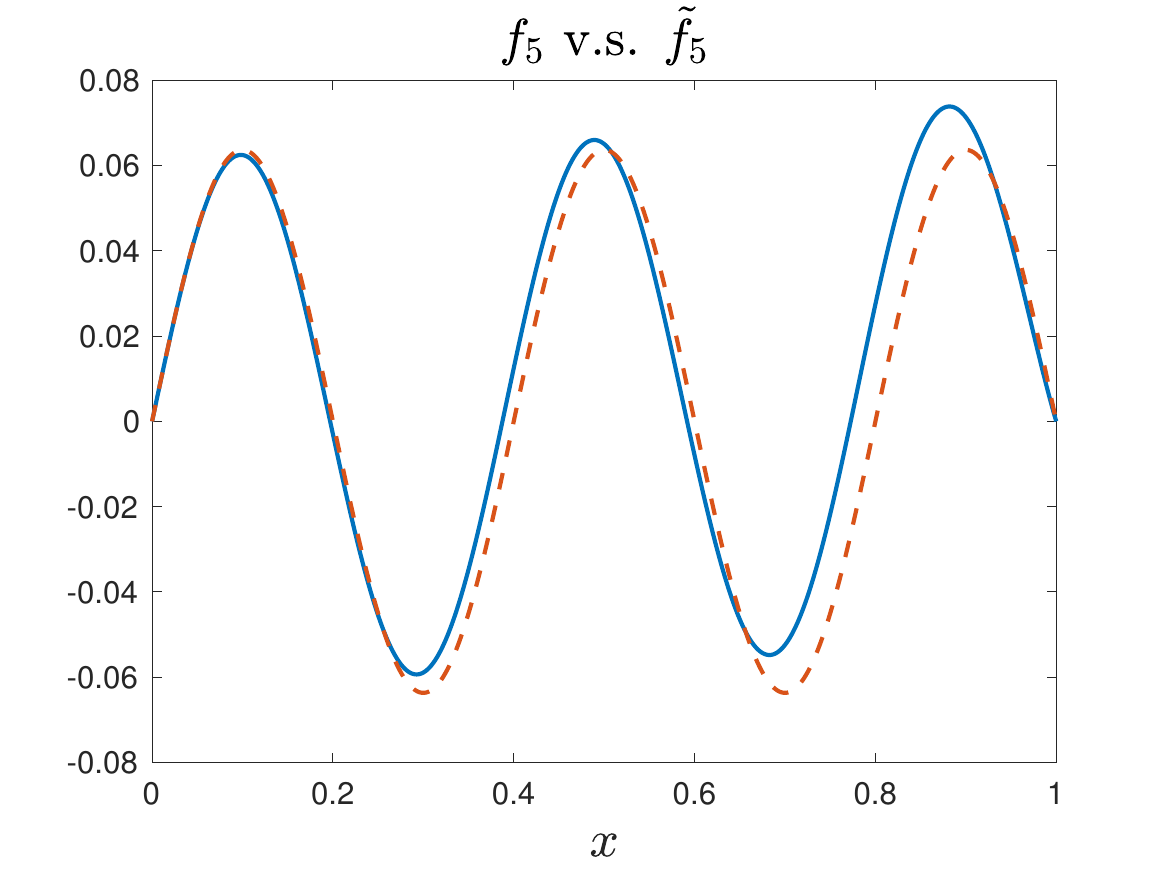}
        \caption{$\begin{array}{c}\lambda_5 \approx 0.0622 \\ 		\tilde{\lambda}_5 \approx 0.0637 \end{array}$}
    \end{subfigure}
    \begin{subfigure}[b]{0.32\textwidth}
        \includegraphics[width=1\textwidth]{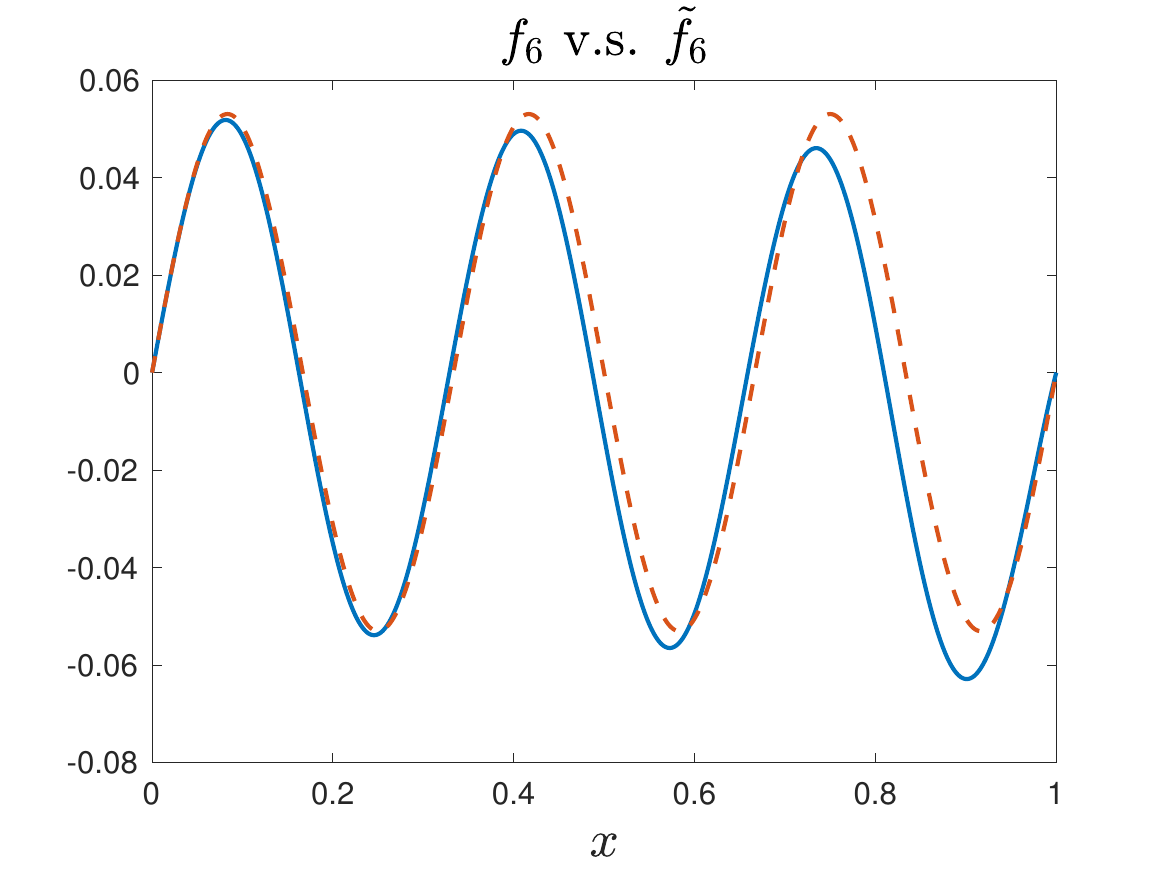}
        \caption{$\begin{array}{c}\lambda_6 \approx 0.0520 \\ 		\tilde{\lambda}_6 \approx 0.0531 \end{array}$}
    \end{subfigure}
    \caption[Eigenfunctions]{Plots of the eigenfunctions corresponding to the largest $6$ eigenvalues of $\mtx{M}$ (blue solid curves), versus the corresponding eigenfunctions of $(-\widetilde\Delta)^{-1/2}$ (red dashed curves). All plotted functions are normalized so that their derivatives at $x=0$ are $1$. Recall that $\lambda_n$ and $\tilde{\lambda}_n$ denote the eigenvalues of $\mtx{M}$ and $(-\widetilde\Delta)^{-1/2}$, respectively.}  
     \label{fig:Eigenfunctions}
\end{figure}

\begin{figure}[!ht]
\centering
    \begin{subfigure}[b]{0.32\textwidth}
        \includegraphics[width=1\textwidth]{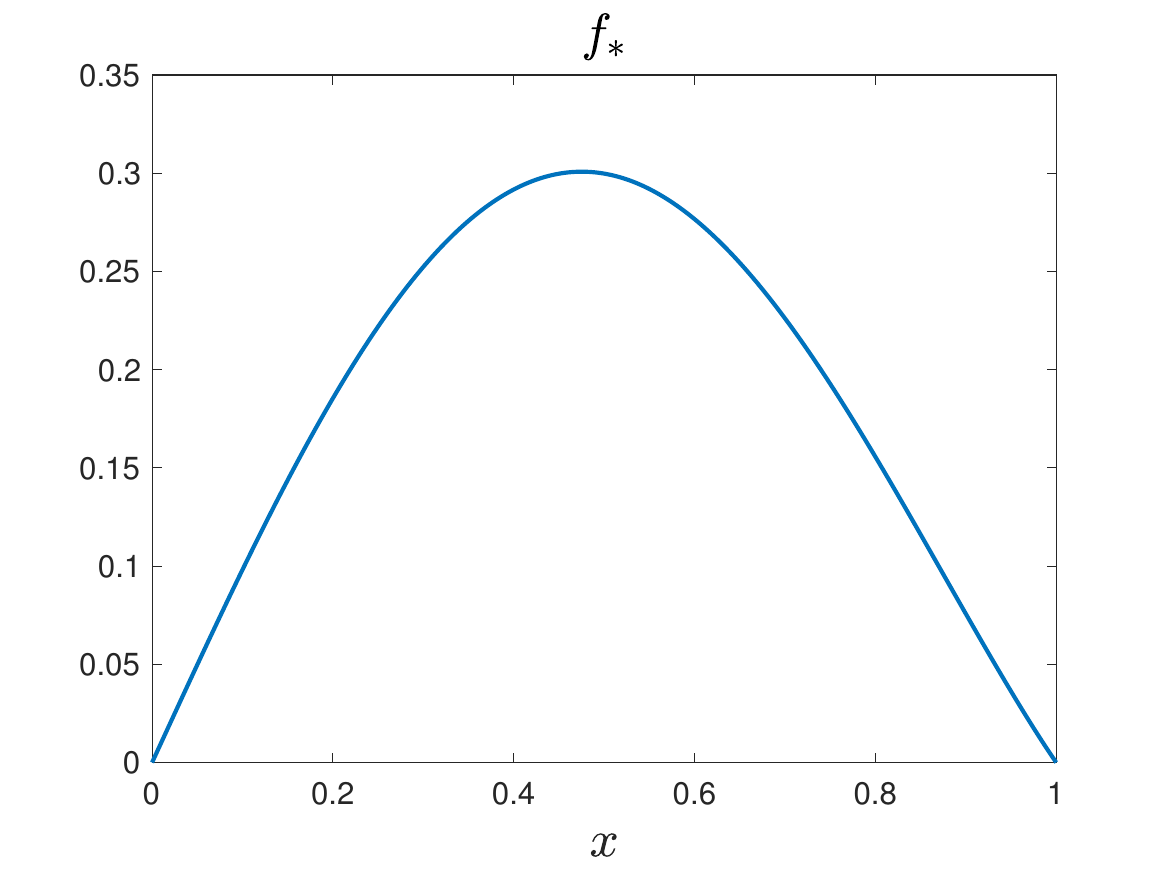}
        \caption{$f_*$}
    \end{subfigure}
    \begin{subfigure}[b]{0.32\textwidth}
        \includegraphics[width=1\textwidth]{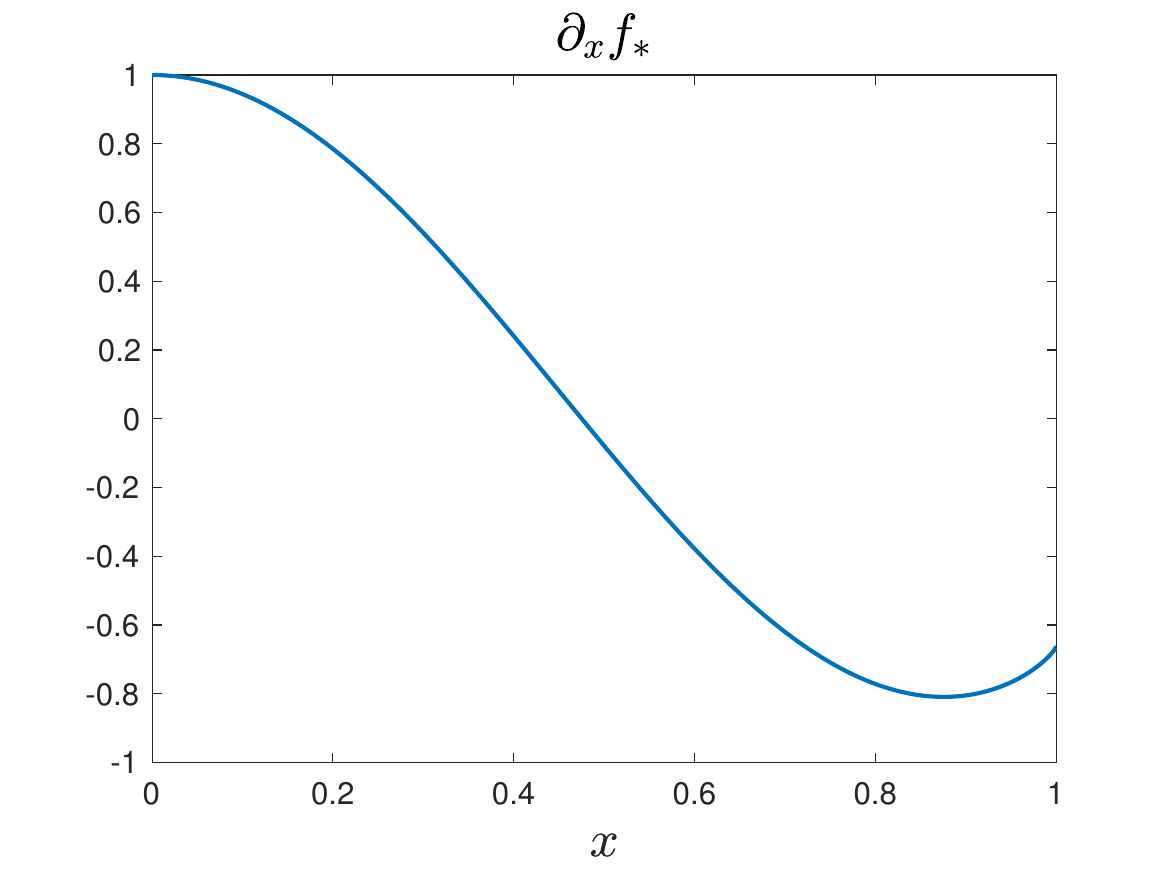}
        \caption{$\partial_xf_*$}
    \end{subfigure}
    \begin{subfigure}[b]{0.32\textwidth}
        \includegraphics[width=1\textwidth]{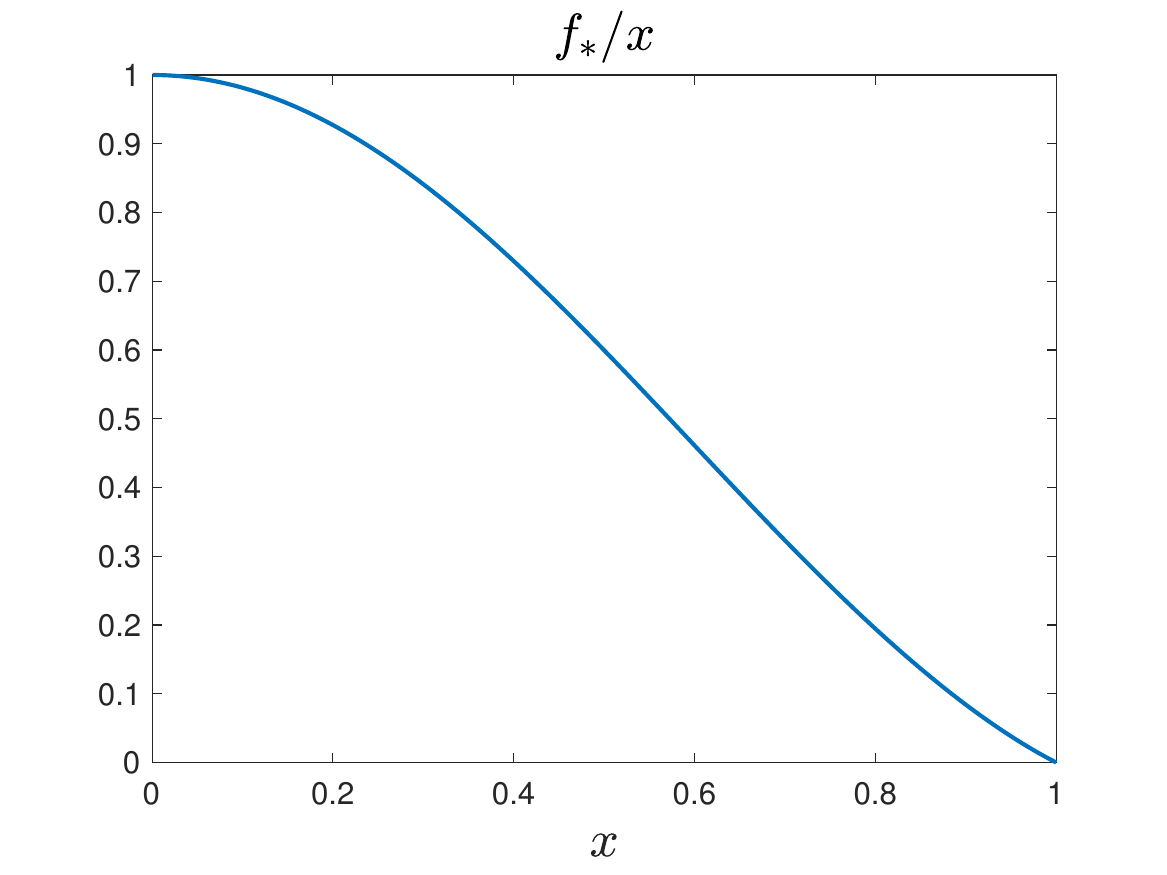}
        \caption{$f_*/x$}
    \end{subfigure}
    \caption[Leading Eigenfunction]{From left to right: $f_*$, $\partial_xf_*$, and $f_*/x$. $f_*$ denotes the leading eigenfuntion of $\mtx{M}$. We numerically verify that $f_*$ is strictly positive on $(0,1)$ and $f_*/x$ is strictly decreasing on $(0,1)$ up to a multiplicative constant.}  
     \label{fig:lead_eigen}
\end{figure}

\begin{figure}[!ht]
\centering
    \begin{subfigure}[b]{0.32\textwidth}
        \includegraphics[width=1\textwidth]{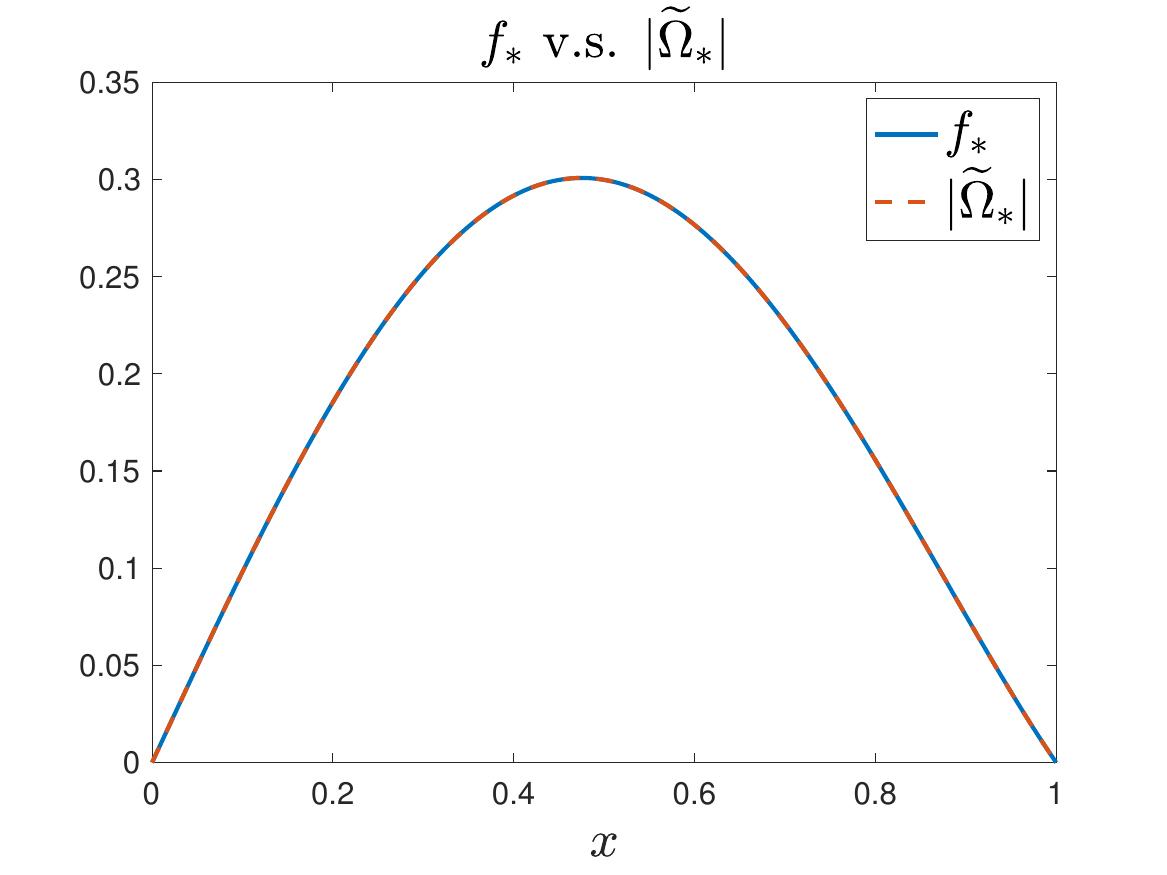}
        \caption{$f_*$ v.s. $|\widetilde{\Omega}_*|$}
    \end{subfigure}
    \begin{subfigure}[b]{0.32\textwidth}
        \includegraphics[width=1\textwidth]{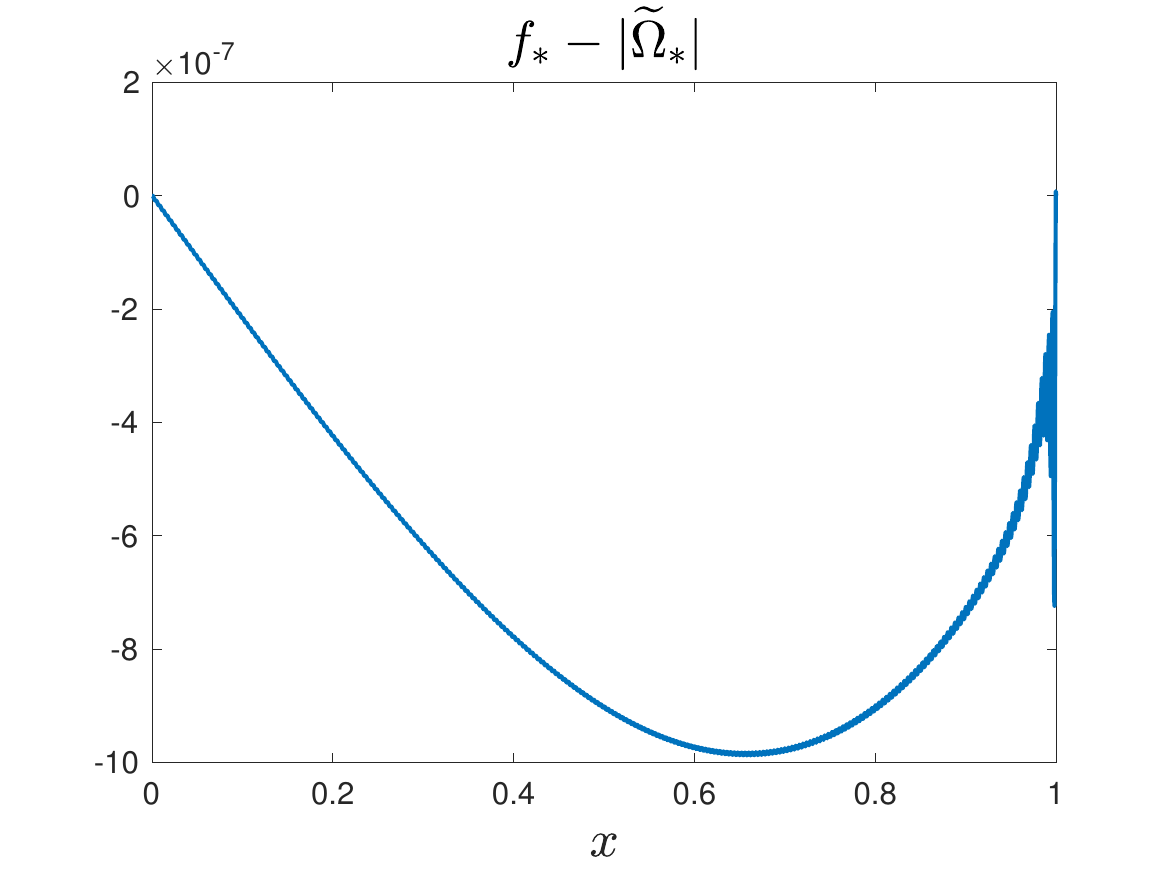}
        \caption{$f_* - |\widetilde{\Omega}_*|$}
    \end{subfigure}
    \begin{subfigure}[b]{0.32\textwidth}
        \includegraphics[width=1\textwidth]{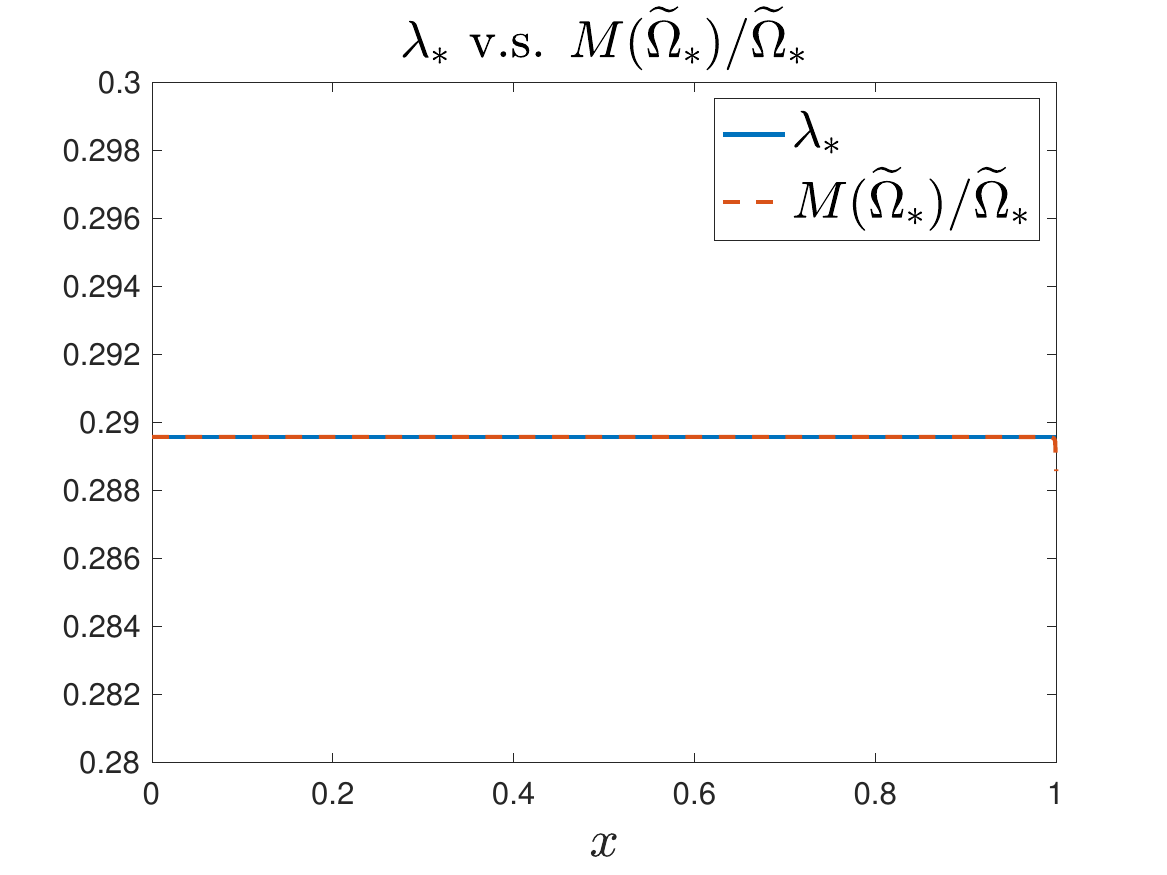}
        \caption{$\lambda_*$ v.s. $\mtx{M}(\widetilde{\Omega}_*)/\widetilde{\Omega}_*$}
    \end{subfigure}
    \caption[Profile Compare]{Comparison between the leading eigenfunction $f_*$ of $\mtx{M}$ and the rescaled approximate self-similar profile (denoted by $\widetilde{\Omega}_*$) numerically obtained in \cite{chen2021finite}. (a) The two profiles are hardly distinguishable. (b) The difference between the two profiles is small. (c) $\widetilde{\Omega}_*$ approximately solves the eigenvalue problem \eqref{eqt:eigen_problem} with the ratio $\mtx{M}(\widetilde{\Omega}_*)/\widetilde{\Omega}_*$ uniformly close to $\lambda_*=\lambda_1$ (c.f. Figure \ref{fig:Eigenfunctions}). Note that the numerical solution of $f_*$ is obtained by directly solving the eigenvalue problem \eqref{eqt:eigen_problem}, while the numerical solution $\widetilde{\Omega}_*$ is obtained in \cite{chen2021finite} by numerically solving the dynamic rescaling equation \eqref{eqt:dynamic_rescaling} of the De Gregorio model.}  
     \label{fig:profile_compare}
\end{figure}

\bibliographystyle{myalpha}
\bibliography{reference}

\end{document}